\definecolor{darkblue}{rgb}{0,0.1,.5}
\def\le{\leqslant}
\def\ge{\geqslant}
\def\leq{\leqslant}
\def\geq{\geqslant}
\def\Z{\mathbb{Z}}
\def \ZZ{{\mathbb{Z}}}
\def\bideg{\mathop{\mathrm{bideg}}}
\DeclareMathOperator{\Hom}{Hom}
\DeclareMathOperator{\Tor}{Tor}
\DeclareMathOperator{\lk}{lk}
\DeclareMathOperator{\rank}{rank}
\DeclareMathOperator{\coker}{coker}
\DeclareMathOperator{\colim}{colim}
\def\HH{\mathit{HH}}
\def\CH{\mathit{CH}}
\def\PH{\mathit{PH}}
\def\sk{\mathcal K}
\def\sK{\mathcal K}
\def\zk{\mathcal Z_{\mathcal K}}
\def\zl{\mathcal Z_{\mathcal L}}
\def\RP{\mathbb{R}{\rm P}}
\def \mc{\mathcal}
\newcommand{\ts}{\textsc}
\newcommand{\hr}[2][]{\hyperref[#2]{#1~\ref{#2}}}
\newtheorem*{rep@theorem}{\rep@title}
\newcommand{\newreptheorem}[2]{%
\newenvironment{rep#1}[1]{%
 \def\rep@title{#2 \ref{##1}}%
 \begin{rep@theorem}}%
 {\end{rep@theorem}}}
\newtheorem{theorem}{Theorem}[section]
\newtheorem{proposition}[theorem]{Proposition}
\newtheorem{lemma}[theorem]{Lemma}
\newtheorem{corollary}[theorem]{Corollary}
\theoremstyle{definition}
\newtheorem{example}[theorem]{Example}
\newtheorem{construction}[theorem]{Construction}
\newtheorem{remark}[theorem]{Remark}
\newtheorem{question}[theorem]{Question}
\numberwithin{equation}{section}
\title{Double cohomology of moment-angle complexes}
\author[Limonchenko]{Ivan Limonchenko}
\address{Mathematical Institute of the Serbian Academy of Sciences
and Arts (SASA), Belgrade, Serbia}
\email{ilimonchenko@gmail.com}
\author[Panov]{Taras Panov}
\address{Department of Mathematics and Mechanics, Moscow
State University, Russia;\newline
Faculty of Computer Science, HSE University, 
Russia;\newline
Institute for Information Transmission Problems, Russian Academy of Sciences, Moscow, Russia}
\email{tpanov@mech.math.msu.su}
\author[Song]{Jongbaek Song}
\address{Department of Mathematics Education, Pusan National University, Pusan, Republic of Korea}
\email{jongbaek.song@gmail.com}
\author[Stanley]{Donald Stanley}
\address{Department of Mathematics and Statistics, University of Regina, Regina, Saskatchewan, Canada}
\email{Donald.Stanley@uregina.ca}
\subjclass[2020]{13F55, 55N10, 55S20, 57S12}
\keywords{moment-angle complex, bicomplex, double cohomology}
\begin{document}

\maketitle

\begin{abstract}
We put a cochain complex structure $\CH^*(\zk)$ on the cohomology of a moment-angle complex $\zk$ and call the resulting cohomology the double cohomology, $\HH^*(\zk)$. We give three equivalent definitions for the differential, and compute $\HH^*(\zk)$ for a family of simplicial complexes containing clique complexes of chordal graphs. 
\end{abstract}

\section{Introduction}
\subsection*{Cohomology of $\zk$.}
Let $\sK$ be a simplicial complex on the set $[m]=\{1,\ldots,m\}$. The associated \emph{moment-angle complex} $\zk$ is a well-studied CW complex with a torus action. 
The cohomology $H^*(\zk)$ can be computed as the cohomology of the Koszul differential graded algebra $\bigl(\Lambda[u_1,\ldots,u_m]\otimes\Z[\sK],d\bigr)$ of the Stanley--Reisner ring (face ring)~$\Z[\sK]$, or as a direct sum $\bigoplus_{I\subset[m]}\widetilde H^*(\sK_I)$ of the reduced simplicial cohomology of full subcomplexes in~$\sK$.

\subsection*{Definition and properties of $\HH^*(\zk)$.}
In this paper we put a cochain complex structure $\CH^*(\zk)$ on $H^*(\zk)$ and 
compute its cohomology $\HH^*(\zk)$, called the \emph{double cohomology} of~$\zk$. We give three equivalent definitions of $\CH^*(\zk)$,
the first corresponding to the induced subcomplex description of $H^*(\zk)$ (Section~3), the second using the Koszul complex (Section~4), and the third coming from the diagonal $S^1$ action on $\zk$ (Section~5).

The double cohomology $\HH^*(\zk)$ is a graded commutative algebra (Theorem \ref{CHDGA}). Furthermore, it satisfies Poincar\'e duality when $\sK$ is a triangulated sphere or a Gorenstein complex (Proposition~\ref{HHPA}). Also, $\HH^*(\zk)$ may have torsion (Example~\ref{rp2ex}).
The double cohomology converts joins into tensor products: \[
\HH^\ast(\mathcal{Z}_{\mathcal{K}\ast \mathcal{L}}) \cong \HH^\ast(\mathcal{Z}_{\mathcal{K}}) \otimes \HH^\ast(\mathcal{Z}_{\mathcal{L}})\] 
(Theorem~\ref{thm_join}, with field coefficients), as in the case of ordinary cohomology of~$\zk$.

\subsection*{Computations of $\HH^*(\zk)$.}
We go on to describe a number of computational results. We start with a breakdown of $\sk$ based on the rank of $\HH^*(\zk)$.

\smallskip

\noindent\textbf{(1)} $\rank \HH^*(\zk)=1.$

\smallskip

It is easy to show that $\zk$ is contractible if and only if $\sk$ is a simplex, which is also equivalent to $\HH^\ast(\zk)=\Z$ (Proposition~\ref{thm_main}). We also show that if $\sk$ is not a simplex then $\HH^*(\zk)$ has zero Euler characteristic and so has even positive rank (Corollary \ref{cor_euler_char_of_HH_for_simplex}).

\smallskip

\noindent\textbf{(2)} $\rank \HH^*(\zk)=2.$

\smallskip

This class of $\sk$ is harder to describe. 
In the case when $\sk$ is a flag simplicial complex, it is shown in~\cite{GPTW} 
that $\zk$ is a wedge of spheres if and only if the one-skeleton $\sk^1$ is a chordal graph. 
We show in Theorem~\ref{thm_main2} that if $\sK$ is the clique complex 
of a chordal graph, then~\textbf{(2)} holds. 
Theorem~\ref{thm_main2} 
also says $\sK$ is the clique complex of a chordal graph exactly when it can be obtained by repeatedly attaching a simplex along a (possibly empty) face, starting from a simplex. 
The simplest examples would be $\sK$ consisting of $m$ discrete points. 
However,~\textbf{(2)} also holds more generally when $\sk$ can be 
written as an attachment of a simplex to any simplicial complex along a proper 
simplex of both. This corresponds to gluing a clique to 
an arbitrary graph along a proper subclique of both, also 
called clique sum, and then taking the clique complex. More precisely 
we have our first main computational result.

\begin{reptheorem}{surgery}
Let $\sk=\sk' \cup_\sigma  \Delta^n$ be a simplicial complex 
obtained from a nonempty simplicial complex $\sk'$ by gluing an $n$-simplex 
along a proper, possibly empty, face $\sigma \in \sk$. Then either $\sk$ is a simplex, or 
\[
  \HH^{-k, 2\ell}(\zk)=\begin{cases} 
    \ZZ & \text{for }(-k,2\ell)=(0,0),~(-1,4);\\
    0 & \text{otherwise}.
  \end{cases}
\]
\end{reptheorem}

We have a few more examples satisfying~\textbf{(2)}. The boundary of an $m$-simplex 
satisfies~\textbf{(2)} (Proposition \ref{prop_boundary_simplex}), but if $m>1$ then 
it has different bidegrees from the ones in 
Theorem~\ref{surgery}. Also we give examples of a non-flag complex 
(Example~\ref{TwoTriangles}) and of a flag complex that is not a union of a complex and a simplex along a simplex (Example~\ref{TwoSquares}) both of which 
satisfy~\textbf{(2)} 
with the same bidegrees as Theorem~\ref{surgery}. These examples can be decomposed as a 
union of two simplicial complexes along a simplex. We wonder if this holds for all such simplicial complexes, 
or if it characterizes such complexes. This would give a homological characterization of graphs that are decomposable as clique sums. 

\smallskip

\noindent\textbf{(3)} $\rank \HH^*(\zk)=4.$

\smallskip

Our second main computational result shows that $\HH^*(\zk)\cong\mathbb Z^4$ for any $m$-cycle with $m\geq 4$ although the grading depends on~$m$. Note that the $4$-cycle $\sk$ is the join $S^0\ast S^0$. So, Theorems~\ref{surgery} and~\ref{thm_join} together show that $\HH^*(\zk)\cong\mathbb Z^4$. For $m\geq 5$, we have the following theorem. 
\begin{reptheorem}{prop_HH_mgon}
Let $\mc{L}$ be an $m$-cycle for $m\geq 5$. Then, the double cohomology of $\mc{Z}_\mc{L}$ is 
\[
\HH^{-k, 2\ell}(\mc{Z}_\mc{L})=
\begin{cases} \Z, & (-k, 2\ell)=(0,0), (-1,4), (-m+3, 2(m-2)), (-m+2, 2m);\\
0, & \text{otherwise}. \end{cases}
\]
\end{reptheorem}

\smallskip

\noindent\textbf{(4)} $\rank \HH^*(\zk)\geq 6.$ 

\smallskip

There are many $\sk$ with larger double cohomology than the one covered by \textbf{(1)}--\textbf{(3)}. For any $n>0$, Theorem~\ref{thm_join} along with Theorem~\ref{surgery} allow us to easily construct $\zk$ such that $\rank \HH^*(\zk)=2^n$. For other ranks, we have Question~\ref{HHrankRealization_quest}.

\subsection*{Motivation}
Our motivation for defining $\HH^*(\zk)$ comes from persistent cohomology $\PH^\ast(-)$. Given a finite pseudometric space $S$, which can be thought of as a data set, we can associate to $S$ a filtered simplicial complex called its Vietoris--Rips complex (other options are \v{C}ech complex, alpha complex, witness complex, etc). It is a family of simplicial complexes $\sk(t)$ depending on a non-negative real number~$t$. 
We can then define the persistent cohomology $\PH^\ast(S)$ of $S$ by the parametrized family $\{H^\ast(\mathcal{K}(t))\}_{t\geq 0}$ together with homomorphisms $H^\ast(\sk(t_2)) \to H^\ast(\sk(t_1))$ for all $t_1$ and $t_2$ with $0\leq t_1< t_2$.

An important property of $\PH^\ast(S)$
is stability, roughly speaking that perturbing $S$ slightly will not change $\PH^\ast(S)$ much. 
In our subsequent work~\cite{BLPSS}, we look at extending this stability to the associated family $\{H^*(\mathcal{Z}_{\sk(t)})\}_{t\geq 0}$ of the cohomology groups of moment-angle complexes. It turns out that $\{H^*(\mathcal{Z}_{\sk(t)})\}_{t\geq 0}$ does not satisfy  stability, but the family of double cohomology $\{\HH^*(\mathcal{Z}_{\sk(t)})\}_{t\geq 0}$ does satisfy stability. It also opens a way to define bigraded barcodes of a data set via bigraded persistent cohomology of the corresponding moment-angle complexes.

\subsection*{Outline}
Section~\ref{sec_prel} contains the background material on the ordinary cohomology of~$\zk$.

In Section~\ref{sec_double_cohom} we introduce the chain complex $\CH_*(\zk)=\bigl(\bigoplus_{I\subset[m]}\widetilde H_*(\sK_I),\partial'\bigr)$ and its cochain version $\CH^*(\zk)$, and define the double homology $\HH_*(\zk)$ and cohomology $\HH^*(\zk)$.

In Section~\ref{bicom} we introduce the bicomplex $\bigl(\Lambda[u_1,\ldots,u_m]\otimes\Z[\sK],d,d'\bigr)$ and show in Theorem~\ref{1db} that its first double cohomology coincides with $\HH^*(\zk)$, i.e.
\[
\HH^\ast(\zk)\cong H(H(\Lambda[u_1,\ldots,u_m]\otimes\Z[\sK],d),d').
\]
We also show that the second double cohomology of the bicomplex is zero (Proposition~\ref{2db}). This implies that the first spectral sequence of the bicomplex has $E_2=\HH^*(\zk)$ and converges to zero. The differentials in this spectral sequence are related to the higher cohomology operations for moment-angle complexes studied recently in~\cite{am-br}. The spectral sequence also implies that the Euler characteristic of $\HH^*(\zk)$ is zero, unless $\sK$ is a full simplex (Corollary~\ref{cor_euler_char_of_HH_for_simplex}).

In Section~\ref{relta} we relate the double cohomology to the torus action on~$\zk$ by showing that the differential $d'$ can be identified with the derivation of $H^*(\zk)$ arising from the diagonal circle action.

Sections~\ref{sec_techniques} and \ref{sec_mcycle} contain the main computational results on the double cohomology of $\zk$. We give more illustrative examples of computations in Section~\ref{sec_examples}, together with some open questions.

\subsection*{Acknowledgements}
The authors thank the Fields Institute for Research in Mathematical Sciences for the opportunity to work on this research project during the Thematic Program on Toric Topology and Polyhedral Products.
We are grateful to Tony Bahri for making us work together and inspirational discussions at the early stages of this project. 

Limonchenko was supported by the Serbian Ministry of Science, Innovations and Technological Development through the Mathematical Institute of the Serbian Academy of Sciences and Arts.
Panov has been funded within the framework of the HSE University Basic Research Program.
Song was supported by Basic Science Research Program through the National Research Foundation of Korea (NRF) funded by the Ministry of Education (NRF-2018R1D1A1B07048480) and a KIAS Individual Grant (MG076101) at Korea Institute for Advanced Study. He is also supported by Pusan National University Research Grant, 2023.
Stanley has funding support from NSERC (RPGIN-05466-2020). We thank the anonymous referee for the valuable comments and useful suggestions on improving the text.

\section{Preliminaries}\label{sec_prel}
Let $\sK$ be a simplicial complex on the set $[m]=\{1,2,\ldots,m\}$. We refer to a subset $I=(i_1,\ldots,i_k)\subset[m]$ that is contained in $\sK$ as a \emph{simplex}. A one-element simplex $\{i\}\in\sK$ is a \emph{vertex}. We also assume that $\varnothing\in\sK$ and, unless explicitly stated otherwise, that $\sK$ contains all one-element subsets $\{i\}\in[m]$ (that is, $\sK$ is a simplicial complex \emph{on the vertex set~$[m]$} without \emph{ghost vertices}).

Denote by $\ts{cat}(\sk)$ the face category of $\sk$, with objects $I\in\sK$ and morphisms $I\subset J$. For each subset $I\in\sk$, we consider the following topological space  
\[
  (D^2, S^1)^I \colonequals \{(z_1, \dots, z_m)\in (D^2)^m \colon |z_j|=1 \text{ if } j\notin I\}\subset (D^2)^m.
\]
Note that $(D^2, S^1)^I$ is a natural subspace of  $(D^2, S^1)^J$ whenever $I\subset J$. Hence, we have a diagram 
\[
\mathscr{D}_\sk \colon \ts{cat}(\sk) \to \ts{top}
\]
mapping $I \in \sk$ to $(D^2, S^1)^I$. The \emph{moment-angle complex} corresponding to $\sk$ is 
\[
  \zk\colonequals \colim \mathscr{D}_K=\bigcup_{I\in\sK}(D^2,S^1)^I\subseteq(D^2)^m. 
\]
We refer to \cite[Chapter~4]{bu-pa15} for more details and examples. 

To each simplicial complex $\sk$, one can define the \emph{face ring} by 
\[
\mathbb{Z}[\sk]\colonequals \mathbb{Z}[v_1, \dots, v_m]/\mathcal{I}_\sk, \]
where $\mathcal{I}_\sk$ is the ideal generated by monomials $\prod_{i\in I} v_i$ for which $I\subset[m]$ is not a simplex of $\sK$. 

The following theorem summarizes several presentations of the cohomology ring $H^\ast(\zk)$. We consider homology and cohomology with coefficients in $\Z$, although the results are generalised easily to any principal ideal domain.

\begin{theorem}[\cite{b-b-p04}, \cite{bu-pa00}]\label{zkcoh}
There
are isomorphisms of bigraded commutative algebras
\begin{align}
  H^*(\zk)&\cong\Tor_{\Z[v_1,\ldots,v_m]}\bigl(\Z[\sk],\Z\bigr) \nonumber \\
  &\cong H\bigl(\Lambda[u_1,\ldots,u_m]\otimes\Z[\sk],d\bigr) \label{eq_Koszul_cohom}\\
  &\cong \bigoplus_{I\subset[m]}\widetilde H^*(\sk_I). \label{eq_Hochster_decomp}
\end{align}
Here, \eqref{eq_Koszul_cohom} is the cohomology of the
bigraded algebra with $\bideg u_i=(-1,2)$, $\bideg v_i=(0,2)$ and differential of bidegree $(1,0)$ given by
$du_i=v_i$, $dv_i=0$ (the Koszul complex). In \eqref{eq_Hochster_decomp},
$\widetilde H^*(\sk_I)$ denotes the reduced simplicial cohomology
of the full subcomplex $\sk_I\subset \sk$ (the restriction of $\sk$
to $I\subset[m]$). The last isomorphism is the sum of isomorphisms
\[
  H^p(\zk)\cong
  \sum_{I\subset[m]}\widetilde H^{p-|I|-1}(\sk_I),
\]
and the ring structure is given by the maps
\[
  H^{p-|I|-1}(\sk_I)\otimes H^{q-|J|-1}(\sk_J)\to
  H^{p+q-|I|-|J|-1}(\sk_{I\cup J})
\]
which are induced by the canonical simplicial maps $\sk_{I\cup
J}\to\sk_I\mathbin{*}\sk_J$ for $I\cap J=\varnothing$ and zero otherwise.
\end{theorem}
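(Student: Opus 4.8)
The plan is to prove the isomorphisms of the theorem one at a time — most efficiently by first producing the Koszul cochain model of $\zk$, then recognising it as a $\Tor$, then decomposing it by multidegree — and checking along the way that every identification respects products. First I would equip $(D^2)^m$ with the product CW structure in which each disc factor has three cells, of dimensions $0,1,2$; then $\zk\subseteq(D^2)^m$ is a CW subcomplex whose cells are indexed by the monomial basis of
\[
  R^*(\sk)\colonequals\Lambda[u_1,\ldots,u_m]\otimes\Z[\sk]\big/\bigl(u_i^2=u_iv_i=v_i^2=0\bigr),
\]
a monomial $\prod_{i\in J}u_i\cdot\prod_{i\in L}v_i$ (with $L\in\sk$, $J\cap L=\varnothing$) corresponding to the cell that is a $2$-cell in the coordinates of $L$, a $1$-cell in the coordinates of $J$ and a $0$-cell elsewhere. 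Computing the cellular coboundary shows it is the derivation with $du_i=v_i$, $dv_i=0$, and a cellular diagonal approximation equips $R^*(\sk)$ with the displayed product; with $\bideg u_i=(-1,2)$ and $\bideg v_i=(0,2)$ this is the cellular cochain algebra theorem of~\cite{b-b-p04}. Finally the evident surjection of bigraded dg-algebras $\bigl(\Lambda[u_1,\ldots,u_m]\otimes\Z[\sk],d\bigr)\twoheadrightarrow R^*(\sk)$ has kernel the ideal generated by the $v_i^2$ and $u_iv_i$, which is acyclic (a tensor product over $i$ of two-term acyclic complexes), so it is a quasi-isomorphism, giving~\eqref{eq_Koszul_cohom} as an isomorphism of bigraded algebras.

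Next, over $S\colonequals\Z[v_1,\ldots,v_m]$ the complex $\bigl(\Lambda[u_1,\ldots,u_m]\otimes S,\ du_i=v_i\bigr)$ is the Koszul resolution of $\Z$, so $\Z[\sk]\otimes_S\bigl(\Lambda[u_1,\ldots,u_m]\otimes S\bigr)=\Lambda[u_1,\ldots,u_m]\otimes\Z[\sk]$ with the induced differential computes $\Tor_S(\Z[\sk],\Z)$. Since the Koszul resolution is itself a dg-algebra, the resulting product on $\Tor_S(\Z[\sk],\Z)$ agrees with the dg-algebra product above, which yields the first isomorphism of the theorem. The same conclusion follows topologically: $\zk$ is the homotopy fibre of the map $(\mathbb{C}P^\infty)^{\sk}\to(\mathbb{C}P^\infty)^m=BT^m$ from the Davis--Januszkiewicz space, whose cohomology is $\Z[\sk]$, and the Eilenberg--Moore spectral sequence of this fibration has $E_2=\Tor_S(\Z[\sk],\Z)$ and collapses there (see~\cite{bu-pa00}).

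For the decomposition~\eqref{eq_Hochster_decomp} and the ring structure, I would use the $\Z^m_{\ge0}$-grading of $\bigl(\Lambda[u_1,\ldots,u_m]\otimes\Z[\sk],d\bigr)$ in which the $i$-th coordinate records the combined $u_i$- and $v_i$-content; the differential preserves it, so the cohomology splits along multidegrees $2\mathbf a$. A monomial of multidegree $2\mathbf a$ killed neither by $u_i^2=0$ nor by the Stanley--Reisner relations forces every $a_i\in\{0,1\}$; writing $\mathbf a=\mathbf 1_I$, the multidegree-$2\mathbf 1_I$ summand has a basis given by the simplices $T\in\sk_I$ (with $T$ corresponding to $\prod_{i\in T}v_i\cdot\prod_{i\in I\setminus T}u_i$), and $d$ becomes, up to sign and a shift of homological degree, the augmented simplicial coboundary of the full subcomplex $\sk_I$. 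Hence this summand computes $\widetilde H^{\,p-|I|-1}(\sk_I)$ in total degree $p$, and summing over $I\subset[m]$ gives~\eqref{eq_Hochster_decomp}. For the product, the dg-algebra multiplication adds multidegrees, so the product of the $2\mathbf 1_I$- and $2\mathbf 1_J$-summands lands in multidegree $2\mathbf 1_I+2\mathbf 1_J$; if $I\cap J\ne\varnothing$ this vector has a coordinate equal to $4$, a multidegree in which the cohomology vanishes, so the product is zero, and if $I\cap J=\varnothing$ then, under the identifications just described, the multiplication becomes the simplicial cochain cross product $\widetilde C^*(\sk_I)\otimes\widetilde C^*(\sk_J)\to\widetilde C^*(\sk_I\ast\sk_J)$ followed by restriction along the canonical simplicial map $\sk_{I\cup J}\to\sk_I\ast\sk_J$ — exactly the map appearing in the statement.

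The main obstacle is not the homological algebra but the topology feeding it: establishing that $R^*(\sk)$ is the cellular cochain \emph{algebra} of $\zk$, rather than merely its cochain complex, requires producing a cellular diagonal approximation on the polyhedral product compatible with the stated multiplication; and identifying the multidegree-$2\mathbf 1_I$ component of the Koszul complex with the simplicial (co)chain complex of $\sk_I$ — and likewise for products — requires a careful choice of orientations and sign conventions so that the \emph{ring} structure, not merely the additive structure, comes out as asserted. I would handle the first point via the explicit diagonal on $(D^2)^m$ restricted to $\zk$, as in~\cite{b-b-p04}, and the second by fixing once and for all the correspondence between a monomial $\prod_{i\in I\setminus T}u_i$ and the ordered simplex $T$ with its induced orientation.
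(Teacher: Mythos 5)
This is a background result that the paper cites rather than proves (it refers to \cite{b-b-p04}, \cite{bu-pa00} and to \cite[Chapters 3--4]{bu-pa15}), and your outline reproduces exactly the standard argument summarised in the paper's Section~\ref{sec_prel}: the cellular cochain model $R^*(\sK)$, the acyclicity of the quotient ideal giving the quasi-isomorphism with the Koszul complex, the Koszul resolution identifying this with $\Tor$, and the multidegree splitting yielding the Hochster decomposition and its product. The proposal is correct (modulo the usual care with signs and with the fact that the kernel of the tensor product of quotient maps is handled factor-by-factor via the K\"unneth theorem, as in \cite[\S3.2]{bu-pa15}), so there is nothing to add.
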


Isomorphism~\eqref{eq_Hochster_decomp} is often referred to as the \emph{Hochster decomposition}, as it comes from Hochster's theorem describing $\Tor_{\Z[v_1,\ldots,v_m]}(\Z[\sk],\Z)$ as a sum of the cohomologies of full subcomplexes.

The bigraded components of the cohomology of $\zk$ are given by
\[
  H^{-k,2\ell}(\zk)\cong\bigoplus_{I\subset[m]\colon |I|=\ell}\widetilde H^{\ell-k-1}(\sK_I),\quad
  H^p(\zk)=\bigoplus_{-k+2\ell=p}H^{-k,2\ell}(\zk).
\]

Consider the following quotient of the Koszul ring $\Lambda[u_1,\ldots,u_m]\otimes\Z[\sk]$:
\begin{equation}\label{rk}
  R^*(\sK)=\Lambda[u_1,\ldots,u_m]\otimes\Z[\sK]\bigr/(v_i^2=u_iv_i=0,\;
  1\le i\le m).
\end{equation}
Then $R^*(\sK)$ has finite rank as an abelian group, with a
basis of monomials $u_Jv_I$ where $J\subset[m]$, $I\in\sK$ and
$J\cap I=\varnothing$. Furthermore, $R^*(\sK)$ can be identified with the cellular cochains $C^*(\zk)$ of $\zk$ with appropriate cell decomposition, the quotient ideal $(v_i^2=u_iv_i=0,\;1\le i\le m)$ is $d$-invariant and acyclic, and there is a ring isomorphism
\[
  H^*(\zk)\cong H\bigl(R^*(\sk),d\bigr),
\]
see~\cite[\S4.5]{bu-pa15} for the details.

The algebras assigned to a simplicial complex $\sK$ above have the following functorial properties, which follow easily from the construction, see~\cite[Proposition~4.5.5]{bu-pa15}.

\begin{proposition}\label{funcH}
Let $\sk$ be a simplicial complex on $m$ vertices, and 
let $\mathcal L\subset\sK$ be a subcomplex on $\ell$ vertices. 
The inclusion $\mathcal L\subset\sK$ induces an inclusion $\mathcal Z_{\mathcal L}\to\zk$ and homomorphisms of (differential) graded algebras
\begin{itemize}
\item[(a)] $\Z[\sk]\to\Z[\mathcal L]$,
\item[(b)] $\bigl(\Lambda[u_1,\ldots,u_m]\otimes\Z[\sk],d\bigr)\to  \bigl(\Lambda[u_1,\ldots,u_\ell]\otimes\Z[\mathcal L],d\bigr)$,
\item[(c)] $\bigl(R^*(\sK),d\bigr)\to\bigl(R^*(\mathcal L),d\bigr)$,
\item[(d)] $H^*(\mathcal Z_{\mathcal K})\to H^*(\mathcal Z_{\mathcal L})$,
\end{itemize}
defined by sending $u_i,v_i$ to $0$ for $i\notin[\ell]$.

Furthermore, if $\sK_I$ is a full subcomplex for some $I\subset[m]$, then we have a retraction $\zk\to\mathcal Z_{\mathcal K_I}$ and homomorphisms
\begin{itemize}
\item[(e)] $\Z[\sk_I]\to\Z[\sK]$,
\item[(f)] $H^*(\mathcal Z_{\mathcal K_I})\to H^*(\zk)$.
\end{itemize}
There are also homology versions of these homomorphisms, which map between $H_*$ in the opposite direction.
\end{proposition}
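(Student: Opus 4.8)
The plan is to realise all six homomorphisms geometrically --- from the coordinate inclusion and the coordinate projection on $(D^2)^m$ --- and then to match them with the algebraic descriptions coming from the models of Theorem~\ref{zkcoh}. Everything reduces to elementary bookkeeping once the maps are set up, the only real work being the naturality of those models.

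First I would make the inclusion $\zl\hookrightarrow\zk$ explicit. Taking the vertex set of $\mathcal L$ to be $[\ell]\subset[m]$, the coordinate inclusion $(D^2)^\ell\hookrightarrow(D^2)^m$ that sends the last $m-\ell$ coordinates to the basepoint $1\in S^1$ carries $(D^2,S^1)^I$, formed in $(D^2)^\ell$, into $(D^2,S^1)^I$, formed in $(D^2)^m$, for every $I\in\mathcal L$; since $\mathcal L\subset\sK$ these maps are compatible with face inclusions, so they assemble into a natural transformation $\mathscr D_{\mathcal L}\Rightarrow\mathscr D_\sK\circ\bigl(\ts{cat}(\mathcal L)\hookrightarrow\ts{cat}(\sK)\bigr)$, hence into a (manifestly injective) map of colimits $\zl\to\zk$. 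Parts~(a)--(c) are then direct verifications. For~(a), the ring map $\Z[v_1,\dots,v_m]\to\Z[v_1,\dots,v_\ell]$ with $v_i\mapsto v_i$ for $i\le\ell$ and $v_i\mapsto 0$ otherwise sends $\mathcal I_\sK$ into $\mathcal I_{\mathcal L}$: a squarefree generator $\prod_{i\in I}v_i$ with $I\notin\sK$ has image $0$ unless $I\subset[\ell]$, and if $I\subset[\ell]$ then $I\notin\mathcal L$ because $\mathcal L\subset\sK$, so its image lies in $\mathcal I_{\mathcal L}$. Tensoring with the analogous map on exterior algebras gives~(b); the Koszul differential ($du_i=v_i$, $dv_i=0$) is preserved in both cases $i\le\ell$ and $i>\ell$, so this is a morphism of differential graded algebras. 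Passing to the quotient~\eqref{rk} gives~(c), since $v_i^2$ and $u_iv_i$ map into the corresponding ideal for $\mathcal L$.

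Part~(d) is obtained by applying cohomology to~(c) (equivalently to~(b)), which yields a map $H^*(\zk)\to H^*(\zl)$ by~\eqref{eq_Koszul_cohom}; that this agrees with the map induced by the inclusion $\zl\hookrightarrow\zk$ follows because that inclusion is cellular for the cell structure realising $R^*(\sK)=C^*(\zk)$ (\cite[\S4.5]{bu-pa15}) and the isomorphisms of Theorem~\ref{zkcoh} are natural under inclusions of subcomplexes. For the full-subcomplex statement I would dualise the picture. The coordinate projection $(D^2)^m\to(D^2)^I$ carries $(D^2,S^1)^J$ into $(D^2,S^1)^{J\cap I}$ for each $J\in\sK$, and $J\cap I\in\sK_I$; hence it restricts to a map $r\colon\zk\to\mathcal Z_{\sK_I}$, and composing the inclusion $\mathcal Z_{\sK_I}\hookrightarrow\zk$ (the case $\mathcal L=\sK_I$ of the first step, with $I$ in place of $[\ell]$) with $r$ is the identity, so $r$ is a retraction. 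On face rings the inclusion $\Z[v_i:i\in I]\hookrightarrow\Z[v_1,\dots,v_m]$ descends to $\Z[\sK_I]\to\Z[\sK]$, which is~(e): here one uses crucially that $\sK_I$ is \emph{full}, so that a non-face $J\subset I$ of $\sK_I$ is a non-face of $\sK$ and thus $\mathcal I_{\sK_I}\subset\mathcal I_\sK$; this step fails for a general subcomplex, which is exactly why~(e) and~(f) need fullness. Applying $r^*$ --- a section of the map~(d) for $\mathcal L=\sK_I$, since $r$ is a retraction --- gives the split injection~(f). Finally, the homology versions come from reading the same geometric maps covariantly, or equivalently from dualising~(a)--(f) over $\Z$, which reverses the arrows in~(d) and~(f).

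The verifications above are all elementary; the single point that genuinely needs care, and which I would regard as the main obstacle, is the compatibility asserted in~(d) and~(f) between the topologically induced maps and the algebraic ones --- that is, the naturality of the isomorphisms of Theorem~\ref{zkcoh} under inclusions of subcomplexes and under retractions onto full subcomplexes. This forces one to work with the explicit models (the Koszul/cellular cochain algebra $R^*(\sK)$ and the Hochster decomposition) rather than with the abstract isomorphisms. Once that naturality is in place, the six maps, their algebra-homomorphism property, their compatibility with composition of inclusions, and their homology analogues all follow, as recorded in~\cite[Proposition~4.5.5]{bu-pa15}.
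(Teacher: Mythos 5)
Your proof is correct and matches the standard approach that the paper defers to --- the paper gives no proof of its own here, instead citing \cite[Proposition~4.5.5]{bu-pa15} with the remark that the statements ``follow easily from the construction''; your geometric realization via coordinate inclusions and projections on $(D^2)^m$, together with the observation that the induced algebra maps send $\mathcal I_\sK$ into $\mathcal I_{\mathcal L}$ (respectively $\mathcal I_{\sK_I}$ into $\mathcal I_\sK$ using fullness), is precisely the content of that reference. The one step you rightly flag as requiring care --- the compatibility between the topologically induced maps and the Koszul/cellular models --- is also handled there, and your deferral to \cite[\S4.5]{bu-pa15} at that point is appropriate.
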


In~\cite[Proposition~4.5.5]{bu-pa15} a more general functorial property was established, with respect to arbitrary simplicial maps $\mathcal L\to\sK$ (not just inclusions). We shall use this extended functoriality in Section~\ref{relta}.

\section{Double (co)homology}\label{sec_double_cohom}
In this section, we define the double homology and the double cohomology of a moment-angle complex $\zk$. We continue working with $\Z$ coefficients.
\subsection{Double homology}\label{homology_HH}
We have
\[
  H_p(\zk)\cong
  \bigoplus_{I\subset[m]} 
  \widetilde H_{p-|I|-1}(\sk_I),
\]
similarly to the cohomological Hochster decomposition of Theorem~\ref{zkcoh}. Given $j\in[m]\setminus I$, consider the homomorphism
\[
  \phi_{p;I,j}\colon\widetilde H_p(\sk_I)\to \widetilde H_p(\sk_{I\cup\{j\}})
\]
induced by the inclusion $\sk_I\hookrightarrow\sk_{I\cup\{j\}}$.
Then we define, for a fixed $I\subset [m]$,
\[
\partial'_p=(-1)^{p+1}\sum_{j\in[m]\setminus I}\varepsilon(j,I)\,\phi_{p;I,j} \colon \widetilde H_p(\sk_I)\to 
\bigoplus_{j\in[m]\setminus I} 
\widetilde H_p(\sk_{I\cup\{j\}}),
\]
where 
\[
  \varepsilon(j,I)=(-1)^{\#\{i\in I\colon i<j\}}.
\]
The extra sign $(-1)^{p+1}$ is chosen so that $\partial'$ together with the simplicial boundary $\partial$ satisfy the bicomplex relation $\partial\partial'=-\partial'\partial$, see Section~\ref{bicom}.

\begin{lemma}\label{lem_HH_boundary}
The homomorphism $\partial'_p\colon
\bigoplus_{I\subset[m]} 
\widetilde H_p(\sk_I)\to
\bigoplus_{I\subset[m]} 
\widetilde H_p(\sk_I)$ satisfies $(\partial'_p)^2=0$. 
\end{lemma}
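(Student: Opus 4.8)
The plan is to verify $(\partial'_p)^2 = 0$ by a direct computation, organising the double sum according to which two elements $j, k$ are added to a subset $I$. Fix $I \subset [m]$ and a class $x \in \widetilde H_p(\sk_I)$. Applying $\partial'_p$ twice, we obtain a sum indexed by ordered pairs $(j,k)$ with $j \in [m]\setminus I$ and $k \in [m] \setminus (I \cup \{j\})$, of terms of the form
\[
(-1)^{p+1}\varepsilon(j,I)\cdot(-1)^{p+1}\varepsilon(k, I\cup\{j\})\,\phi_{p; I\cup\{j\}, k}\circ\phi_{p; I, j}(x).
\]
The key observation is that for a fixed two-element subset $\{j,k\} \subset [m] \setminus I$, both orders of insertion produce a map landing in $\widetilde H_p(\sk_{I \cup \{j,k\}})$, and since both are induced by the same simplicial inclusion $\sk_I \hookrightarrow \sk_{I\cup\{j,k\}}$, the two composites $\phi_{p;I\cup\{j\},k}\circ\phi_{p;I,j}$ and $\phi_{p;I\cup\{k\},j}\circ\phi_{p;I,k}$ are equal (functoriality of the inclusion maps on reduced homology). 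The sign $(-1)^{p+1}$ appears squared, so it contributes $+1$ and may be ignored; everything therefore reduces to showing that the two sign factors $\varepsilon(j,I)\varepsilon(k,I\cup\{j\})$ and $\varepsilon(k,I)\varepsilon(j,I\cup\{k\})$ are opposite.

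The main computational step is this sign cancellation, and it is entirely combinatorial. Assume without loss of generality $j < k$. Then $\varepsilon(j, I) = (-1)^{\#\{i \in I : i < j\}}$, and $\varepsilon(k, I \cup \{j\}) = (-1)^{\#\{i \in I \cup \{j\}: i < k\}} = (-1)^{\#\{i \in I : i < k\}} \cdot (-1)$, the extra $-1$ coming from the fact that $j < k$ so $j$ is counted. In the other order, $\varepsilon(k, I) = (-1)^{\#\{i \in I : i < k\}}$ and $\varepsilon(j, I \cup \{k\}) = (-1)^{\#\{i \in I \cup \{k\} : i < j\}} = (-1)^{\#\{i \in I : i < j\}}$, since $k > j$ contributes nothing. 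Multiplying, the first product is $(-1)^{\#\{i\in I: i<j\} + \#\{i \in I: i < k\} + 1}$ and the second is $(-1)^{\#\{i \in I: i < k\} + \#\{i \in I : i < j\}}$; these differ precisely by the sign $-1$, as required. Hence the $(j,k)$ and $(k,j)$ contributions cancel in pairs, and the total is zero.

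I do not expect any serious obstacle here: the result is a standard Koszul-type sign bookkeeping argument, essentially identical to the proof that the simplicial (co)chain complex of the full subcomplex poset, or the Koszul differential on $\Lambda[u_1,\dots,u_m]$, squares to zero. The only point requiring a little care is confirming that $\phi$ is genuinely functorial at the level of reduced homology classes — i.e.\ that $\phi_{p;I\cup\{j\},k}\circ\phi_{p;I,j} = \phi_{p;I\cup\{k\},j}\circ\phi_{p;I,k}$ — but this is immediate since both are the homomorphism on $\widetilde H_p$ induced by the single simplicial inclusion $\sk_I \hookrightarrow \sk_{I\cup\{j,k\}}$, and functoriality of reduced simplicial homology does the rest. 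An alternative, slicker route would be to defer the entire verification to Section~\ref{bicom}, where $\partial'$ is realised as the differential $d'$ on the Koszul bicomplex descending to cohomology; since $(d')^2 = 0$ holds at the cochain level for obvious reasons, it descends. But the self-contained combinatorial proof above is short enough to give directly.
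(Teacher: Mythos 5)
Your proof is correct and takes essentially the same approach as the paper's: group the double sum by unordered pairs $\{j,k\}$, observe the two composites are the same homomorphism induced by $\sk_I\hookrightarrow\sk_{I\cup\{j,k\}}$, and verify the Koszul sign cancellation $\varepsilon(j,I)\varepsilon(k,I\cup\{j\})=-\varepsilon(k,I)\varepsilon(j,I\cup\{k\})$. The paper writes the sign identity symmetrically in $j,k$ while you assume WLOG $j<k$, but this is an immaterial difference.
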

\begin{proof}
We have
\begin{align*}
\partial'_p\partial'_p&=\sum_{k\in[m]\setminus(I\cup j)}\varepsilon(k,I\cup\{j\})\,
  \phi_{p;I\cup\{j\},k}\Bigl(\sum_{j\in[m]\setminus I}\varepsilon(j,I)\,\phi_{p;I,j}\Bigr)
  \\
  &=\sum_{j,k\in[m]\setminus I,\,j\ne k}\bigl(\varepsilon(k,I\cup\{j\})\varepsilon(j,I)+
  \varepsilon(j,I\cup\{k\})\varepsilon(k,I)\bigr)\,\phi_{p;I,j,k},
\end{align*}
where $\phi_{p;I,j,k}\colon\widetilde H_p(\sk_I)\to \widetilde H_p(\sk_{I\cup\{j,k\}})$ is the homomorphism 
induced by the inclusion $\sk_I\hookrightarrow\sk_{I\cup\{j,k\}}$. Now the required identity follows from
\begin{multline*}
  \#\{r\in I\cup\{j\}\colon r<k\}+\#\{s\in I\colon s<j\}\\
  =1+\#\{s\in I\cup\{k\}\colon s<j\}+\#\{r\in I\colon r<k\}\mod 2.\qedhere
\end{multline*}
\end{proof}

We therefore have a chain complex 
\[
  \CH_\ast(\zk)\colonequals (H_\ast(\zk), \partial'),
\] 
where 
\[
\partial' \colon \widetilde{H}_{-k, 2\ell}(\zk) \to \widetilde{H}_{-k-1, 2\ell+2}(\zk)
\]
with respect to the following bigraded decomposition of $H_p(\zk)$ 
\[
  H_p(\zk)=\bigoplus_{-k+2\ell=p} H_{-k, 2\ell}(\zk),\quad
  H_{-k, 2\ell}(\zk)\cong\bigoplus_{I\subset [m]\colon|I|=\ell}
  \widetilde{H}_{\ell-k-1}(\mathcal{K}_I).
\]
We define the bigraded \emph{double homology} of $\zk$ by
\[
  \HH_\ast(\zk)=H(H_\ast(\zk), \partial').
\]

\begin{remark}
Although $\partial'$ increases the total degree of $H_*(\zk)$ by~$1$, we refer to $(\CH_*(\zk),\partial')$ as a chain (rather than a cochain) complex. The reason is that $\partial$ and $\partial'$ satisfy the bicomplex relation $\partial\partial'=-\partial'\partial$, see Section~\ref{bicom}. The same remark refers to the ``cochain'' complex $(\CH^*(\zk),d')$ considered next.
\end{remark}

\subsection{Double cohomology}\label{subsec_HH_Cohomology}
For the cohomological version, given $i\in I$, consider the homomorphism 
\begin{equation}\label{eq_cohom_differential_component}
  \psi_{p;i,I}\colon\widetilde H^p(\sk_I)\to \widetilde H^p(\sk_{I\setminus\{i\}})
\end{equation}
induced by the inclusion $\sk_{I\setminus\{i\}}\hookrightarrow\sk_I$, and 
\begin{equation}\label{d'simp}
  d'_p=(-1)^{p+1}\sum_{i\in I}\varepsilon(i,I)\psi_{p;i,I}.
\end{equation}

We define a map $d'\colon H^*(\zk)\to H^*(\zk)$ using the decomposition given in \eqref{eq_Hochster_decomp}  together with \eqref{eq_cohom_differential_component} and \eqref{d'simp}; it acts on the bigraded cohomology of 
$\zk$ as follows:
\begin{equation}\label{eq_d'_HZK}
  d'\colon H^{-k,2\ell}(\zk)\to H^{-k+1,2\ell-2}(\zk).
\end{equation}
A computation similar to that of Lemma \ref{lem_HH_boundary} shows that $(d')^2=0$, which turns $H^\ast(\zk)$ into a cochain complex
\begin{equation}\label{chcoho}
  \CH^\ast(\zk)\colonequals (H^\ast(\zk), d').
\end{equation}
We define the bigraded \emph{double cohomology} of $\zk$ by
\begin{equation*}\label{eq_HH_via_cohomology}
  \HH^*(\zk)=H\bigl(H^*(\zk),d'\bigr).
\end{equation*}

We end this section with an example where $\HH_{-k, 2\ell}(\zk)$ is not isomorphic to $\HH^{-k, 2\ell}(\zk)$, as in the case of ordinary cohomology in the presence of torsion. It also shows that the usual universal coefficient theorem does not apply in general. 

\begin{example}\label{rp2ex}
Let $\mathcal{K}$ be the minimal $6$-vertex triangulation of $\RP^2$, shown in Figure~\ref{fig_min_triangulation_RP2}.
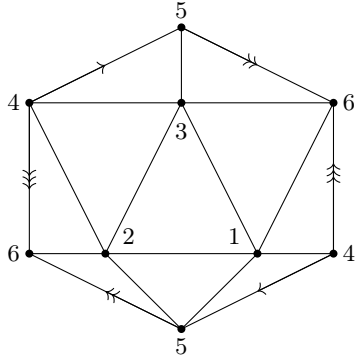
\begin{figure}
\begin{tikzpicture}[scale=0.5]
\draw (4,0)--(8,2)--(8,6)--(4,8)--(0,6)--(0,2)--cycle;
\fill [black] (4,0) circle (3pt) node[below] at (4,0){\small5};
\fill [black] (8,2) circle (3pt) node[right] at (8,2){\small4};
\fill [black] (8,6) circle (3pt) node[right] at (8,6){\small6};
\fill [black] (4,8) circle (3pt) node[above] at (4,8){\small5};
\fill [black] (0,6) circle (3pt) node[left] at (0,6){\small4};
\fill [black] (0,2) circle (3pt) node[left] at (0,2){\small6};

\fill [black] (2,2) circle (3pt) node[above right] at (2.2,2){\small2};
\fill [black] (6,2) circle (3pt) node[above left] at (5.8,2){\small1};
\fill [black] (4,6) circle (3pt) node[below] at (4,5.7){\small3};

\draw (0,2)--(8,2);
\draw (0,6)--(8,6);
\draw (0,6)--(2,2)--(4,6)--(6,2)--(8,6);
\draw (4,8)--(4,6);
\draw (2,2)--(4,0)--(6,2);

\draw[->] (0,6)--(2,7);
\draw[->] (8,2)--(6,1);

\draw[->>] (4,0)--(2,1);
\draw[->>] (4,8)--(6,7);

\draw[->>>] (8,2)--(8,4.4);
\draw[->>>] (0,6)--(0,3.7);

\end{tikzpicture}
\caption{A minimal triangulation of $\mathbb{R}P^2$.}
\label{fig_min_triangulation_RP2}
\end{figure}
In this case,
\[
    H^{-k, 2\ell}(\zk)\cong\begin{cases}
    \widetilde H^2(\mathcal{K})\cong \ZZ_2,  & (-k,2\ell)=(-3,12);\\
    \bigoplus_{1\le i \le 6} 
    \widetilde H^1(\mathcal{K}_{[6]\setminus \{i\}}) \cong \ZZ^6, & (-k, 2\ell)=(-3, 10);\\
    \bigoplus_{1\le i<j \le 6} 
    \widetilde H^1(\mathcal{K}_{[6]\setminus \{i,j\}}) \cong \ZZ^{15}, & (-k, 2\ell)=(-2, 8);\\
     \bigoplus_{\substack{1\le i<j<k \le 6 \\ \{i,j,k\} \notin \mathcal{K}  }} 
     \widetilde H^1(\mathcal{K}_{\{i,j,k\}}) \cong  \ZZ^{10}, & (-k, 2\ell)=(-1, 6);\\
     \widetilde H^{-1}(\mathcal{K}_\varnothing) \cong \ZZ &(-k, 2\ell)=(0,0);\\
     0, & \text{otherwise}. 
    \end{cases}
\]
Hence, to calculate $\HH^\ast(\zk)$ we only need to consider 
\begin{align}
   0& \xrightarrow{d'} H^{-3,12}(\zk) \xrightarrow{d'} 0, \label{eq_cohom_H312}\\
     0& \xrightarrow{d'} H^{-3,10}(\zk) \xrightarrow{d'} H^{-2,8}(\zk) \xrightarrow{d'} H^{-1, 6}(\zk) \xrightarrow{d'} 0.\label{eq_cohom_H310}
\end{align}
The first sequence \eqref{eq_cohom_H312} implies that the torsion part  $\widetilde{H}^2(\mathcal{K})=\mathbb{Z}_2$ survives in the double cohomology, which gives 
\[
  \HH^9(\zk)=\HH^{-3,12}(\zk)=\mathbb{Z}_2.
\]  

For the homology computation, we observe that $H_{-k, 2\ell}(\zk)$ is isomorphic to $H^{-k, 2\ell}(\zk)$ for all $(-k, 2\ell)$ except $(-k, 2\ell)=(-3, 12)$ and $(-4,12)$, in which case
\[
  H_{-3,12}(\zk)\cong\widetilde H_2(\mathcal K)=0, \quad
  H_{-4,12}(\zk)\cong\widetilde H_1(\mathcal K)\cong\Z_2.
\] 
Hence, to calculate $\HH_\ast(\zk)$, we consider the chain complex
\[
\begin{tikzcd}[column sep=small]
0 \arrow{r}{\partial'} & 
H_{-1, 6}(\zk) \arrow{r}{\partial'} 
&
H_{-2, 8}(\zk) \arrow{r}{\partial'} 
&
H_{-3, 10}(\zk) \arrow{r}{\partial'}
&
H_{-4, 12}(\zk) \arrow{r}{\partial'} 
& 0.
\end{tikzcd}
\]
We observe that $\partial'\colon H_{-3, 10}(\zk) \to H_{-4, 12}(\zk)$ is surjective, because 
\begin{equation}\label{eq_H_310}
H_{-3, 10}(\zk)\cong\bigoplus_{1\leq i \leq 6}\widetilde{H}_1(\mathcal{K}_{[6]\setminus \{i\}}) \cong \bigoplus_{1\leq i \leq 6}\widetilde{H}_1(\RP^1)
\end{equation}
and each component of \eqref{eq_H_310} surjects naturally onto $H_{-4,12}(\zk)=\widetilde{H}_1(\RP^2)$. Hence, the torsion part $\widetilde{H}_1(\mathcal{K})=\mathbb{Z}_2$ vanishes in the double homology, and we obtain 
\[
  \HH_{-3,12}(\zk)=\HH_{-4,12}(\zk)=0.
\]  
\end{example}

\section{The bicomplexes}
\label{bicom}
Here we provide an algebraic description of $\HH^\ast(\zk)$ by showing it to be isomorphic to the double cohomology of a bicomplex arising from the Koszul algebra~\eqref{eq_Koszul_cohom}.

\begin{construction}
Given $I\subset[m]$, let $C^p(\sK_I)$ denote the $p$th simplicial cochain group of $\sK_I$. 
Denote by $\alpha_{L,I}\in C^{q-1}(\sK_I)$ the
basis cochain corresponding to an oriented simplex
$L=(\ell_1,\ldots,\ell_q)\in\sK_I$; it takes value~$1$ on $L$ and
vanishes on all other simplices. The simplicial coboundary map (differential) $d$ is defined by
\[
  d\alpha_{L,I}=\sum_{j\in I\setminus L,\,L\cup\{j\}\in\sK}\varepsilon(j,L)\alpha_{L\cup\{j\},I}.
\]
The differential $d'$ can be defined on the cochains by the same formula~\eqref{d'simp}, with
$\psi_{p;i,I}\colon C^p(\sk_I)\to C^p(\sk_{I\setminus\{i\}})$
induced by the inclusion $\sk_{I\setminus\{i\}}\hookrightarrow\sk_I$.

We introduce the second differential $d'$ of bidegree $(1,-2)$ on the Koszul bigraded ring $\Lambda[u_1,\ldots,u_m]\otimes\Z[\sk]$ by setting
\begin{equation}\label{d'uv}
  d'u_j=1,\quad d'v_j=0,\quad\text{ for }j=1,\ldots,m,
\end{equation}
and extending by the Leibniz rule. Explicitly, the differential $d'$ is defined on square-free monomials $u_Jv_I$ by
\[
  d'(u_Jv_I)=\sum_{j\in J}\varepsilon(j,J)u_{J\setminus\{j\}}v_I,
  \quad d'(v_I)=0.
\]

The differential $d'$ is also defined by the same formula on the submodule $R^*(\sK)\subset \Lambda[u_1,\ldots,u_m]\otimes\Z[\sk]$ generated by the monomials $u_Jv_I$ with $J\cap I=\varnothing$. However, the ideal $\bigr(v_i^2=u_iv_i=0,\; 1\le i\le m)$ is not $d'$-invariant, so 
$(R(\sK),d')$ is not a differential graded algebra.
\end{construction}

\begin{lemma}
With $d$ and $d'$ defined above, $\bigl(\bigoplus_{I\subset[m]} C^*(\sK_I),d,d'\bigr)$,
$\bigl(\Lambda[u_1,\ldots,u_m]\otimes\Z[\sk],d,d'\bigr)$ and $\bigl(R^*(\sK),d,d'\bigr)$ are bicomplexes, that is, $d$ and $d'$ satisfy $dd'=-d'd$.
\end{lemma}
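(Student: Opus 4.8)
The plan is to verify the bicomplex relation $dd'=-d'd$ separately on each of the three models and observe that the first two cases reduce to the third, or rather that each follows from a direct computation on basis monomials. I would begin with the Koszul ring $\Lambda[u_1,\ldots,u_m]\otimes\Z[\sk]$, since here both $d$ and $d'$ are derivations: $d$ is a derivation of bidegree $(1,0)$ with $du_j=v_j$, $dv_j=0$, and $d'$ is a derivation of bidegree $(1,-2)$ with $d'u_j=1$, $d'v_j=0$. The key point is that the graded commutator $[d,d']=dd'+d'd$ of two odd derivations is again a derivation, so it suffices to check that it vanishes on the algebra generators $u_j$ and $v_j$. On $v_j$ both composites are zero. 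On $u_j$ we have $dd'u_j=d(1)=0$ and $d'du_j=d'v_j=0$, so $[d,d']u_j=0$ as well. Hence $[d,d']=0$ on all of $\Lambda[u_1,\ldots,u_m]\otimes\Z[\sk]$, which is exactly $dd'=-d'd$.

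For the quotient model $R^*(\sK)$, the differential $d'$ is given by the explicit formula $d'(u_Jv_I)=\sum_{j\in J}\varepsilon(j,J)u_{J\setminus\{j\}}v_I$ on the monomial basis $u_Jv_I$ with $J\cap I=\varnothing$, and $d$ acts by the Koszul formula followed by reduction modulo the ideal $(v_i^2=u_iv_i=0)$. Although $d'$ is not a derivation of $R^*(\sK)$ (the ideal is not $d'$-invariant), the identity $dd'=-d'd$ can still be obtained by a direct sign bookkeeping on each basis monomial $u_Jv_I$: applying $d$ then $d'$, versus $d'$ then $d$, produces in both cases a sum over pairs $(j,j')$ with $j\in J$, $j'\notin J\cup I$ (and $I\cup\{j'\}\in\sK$) of the monomial $u_{J\setminus\{j\}}v_{I\cup\{j'\}}$, and one checks the two total sign contributions $\varepsilon(j,J)\varepsilon(j',I)$ and $\varepsilon(j',I)\varepsilon(j,J\setminus\{j\})$ — wait, more precisely the signs coming from removing $j$ and inserting $j'$ in the two orders — differ by exactly $-1$. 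The terms where $j=j'$ cannot occur because $J\cap I=\varnothing$ forces $j'\notin J$. This is the same combinatorial identity on counting $\#\{i<j\}$ that underlies Lemma~\ref{lem_HH_boundary}, just applied in a slightly different configuration. I would also note that $R^*(\sK)$ is a $d$- and $d'$-invariant subquotient compatible with $\Lambda[u_1,\ldots,u_m]\otimes\Z[\sk]$ (invariant under $d$, and the formula for $d'$ is induced), so in fact the relation on $R^*(\sK)$ follows formally from the relation on the Koszul ring once one checks $d'$ descends.

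For the simplicial model $\bigl(\bigoplus_{I\subset[m]}C^*(\sK_I),d,d'\bigr)$, the verification is again a direct sign computation on a basis cochain $\alpha_{L,I}\in C^{q-1}(\sK_I)$. Here $d\alpha_{L,I}=\sum_{j\in I\setminus L,\,L\cup\{j\}\in\sK}\varepsilon(j,L)\alpha_{L\cup\{j\},I}$ adds a vertex to the simplex $L$ while keeping the index set $I$, whereas $d'$ removes a vertex from $I$; crucially $d'$ only produces nonzero terms when the removed vertex $i\in I$ does not lie in $L$ (otherwise $L\notin\sK_{I\setminus\{i\}}$ and the restriction of $\alpha_{L,I}$ is zero). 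So both $dd'\alpha_{L,I}$ and $d'd\alpha_{L,I}$ are sums over pairs consisting of a vertex $j\in I\setminus L$ added to $L$ and a vertex $i\in I\setminus L$, $i\ne j$, removed from $I$; the resulting cochains $\alpha_{L\cup\{j\},I\setminus\{i\}}$ match up, and one checks the signs $\varepsilon(j,L)$ (for the $d$-step) and $\varepsilon(i,I)$ vs.\ $\varepsilon(i,I)$ adjusted by whether $j$ has already been removed differ by $-1$ — this is again the $\mod 2$ identity on the cardinalities $\#\{r<i\}$ and $\#\{s<j\}$ from Lemma~\ref{lem_HH_boundary}. The main obstacle, such as it is, is purely the sign bookkeeping: making sure the reference set for each $\varepsilon$ is the correct one at each stage (e.g.\ $\varepsilon(i,I)$ versus $\varepsilon(i,I\setminus\{j\})$, and $\varepsilon(j,L)$ versus $\varepsilon(j,L)$ unchanged since $j\notin L$ throughout), and confirming that the degenerate/out-of-complex terms vanish so that the two double sums have literally the same index set. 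Once the Koszul case is done via the derivation argument, I would present the other two either as consequences of compatibility or as the same one-line combinatorial identity, to avoid repeating the calculation three times.
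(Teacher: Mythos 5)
Your argument is correct and, for the Koszul ring, cleaner than the paper's. The paper expands $dd'(u_Jv_I)$ and $d'd(u_Jv_I)$ on a general monomial and checks a congruence-mod-$2$ identity on the sign exponents; your observation that $dd'+d'd$ is the graded commutator of two odd-degree derivations (hence itself a derivation) reduces the whole verification to two one-line checks on the generators $u_j,v_j$, eliminating the sign bookkeeping entirely. For the simplicial model you do essentially the paper's direct calculation, and the $R^*(\sK)$ case does follow by restriction once one notes that the span of the square-free monomials $u_Jv_I$ with $J\cap I=\varnothing$ is preserved by both $d$ and $d'$ as a \emph{submodule} of the Koszul ring.

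Two small imprecisions are worth flagging, though neither is fatal since the derivation-plus-restriction argument is self-contained. First, in your direct sign check for $R^*(\sK)$ you describe the double sum as running over pairs $(j,j')$ with $j\in J$ and $j'\notin J\cup I$, producing $u_{J\setminus\{j\}}v_{I\cup\{j'\}}$. That index set is not right: the differential $d$ sends $u_k\mapsto v_k$, so it only ever moves a label from the $u$-part to the $v$-part, and both composites output monomials $u_{J\setminus\{j,k\}}v_{I\cup\{k\}}$ with $j,k\in J$ and $j\neq k$ (as the paper records). The diagonal $j=k$ is excluded because $d'$ removes one $u$-label and $d$ converts a different one, not because of $J\cap I=\varnothing$. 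Second, calling $R^*(\sK)$ a $d$- and $d'$-invariant \emph{subquotient} and saying the relation holds ``once one checks $d'$ descends'' would mislead a reader: the quotient ideal $(v_i^2,u_iv_i)$ is not $d'$-invariant (the paper notes this explicitly, which is precisely why $(R^*(\sK),d')$ is not a DGA), so $d'$ does not descend through the quotient. What holds, and what you actually use, is that $R^*(\sK)$ viewed as the submodule of square-free monomials $u_Jv_I$ with $J\cap I=\varnothing$ is closed under both $d$ and $d'$, so the identity restricts.
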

\begin{proof}
For a simplicial cochain $\alpha_{L,I}\in C^{q-1}(\sK_I)$,
\begin{align*}
  dd'(\alpha_{L,I})&=d\bigl((-1)^q\sum_{i\in I\setminus L}\varepsilon(i,I)\alpha_{L,I\setminus\{i\}}\bigr)=
  (-1)^q\!\!\!\!\!\!\!\!\!\!\!\!\sum_{i\in I\setminus L,\,j\in I\setminus(L\cup\{i\})}\!\!\!\!\!\!\!\!\!\!\!\!
  \varepsilon(i,I)\varepsilon(j,L)\alpha_{L\cup\{j\},I\setminus\{i\}},\\
  d'd(\alpha_{L,I})&=d'\bigl(\sum_{j\in I\setminus L}\varepsilon(j,L)\alpha_{L\cup \{j\},I}\bigr)=
  (-1)^{q+1}\!\!\!\!\!\!\!\!\!\!\sum_{j\in I\setminus L,\,i\in I\setminus(L\cup\{j\})}
  \!\!\!\!\!\!\!\!\!\!
  \varepsilon(j,L)\varepsilon(i,I)\alpha_{L\cup\{j\},I\setminus\{i\}},
\end{align*}
so that $dd'=-d'd$.

For a monomial $u_Jv_I$ in the Koszul ring $\Lambda[u_1,\ldots,u_m]\otimes\Z[\sk]$ (or in $R^*(\sk)$),
\begin{align*}
  dd'(u_Jv_I)&=d\bigl(\sum_{j\in J}\varepsilon(j,J)u_{J\setminus\{j\}}v_I\bigr)=
  \sum_{j,k\in J,\,j\ne k}\varepsilon(j,J)\varepsilon(k,J\setminus\!\{j\})
  u_{J\setminus\{j,k\}}v_{I\cup\{k\}},\\
  d'd(u_Jv_I)&=d'\bigl(\sum_{k\in J}\varepsilon(k,J)u_{J\setminus\{k\}}v_{I\cup\{k\}}\bigr)=
  \!\!\!\sum_{j,k\in J,\,j\ne k}\!\!\!\varepsilon(k,J)\varepsilon(j,J\setminus\!\{k\})
  u_{J\setminus\{j,k\}}v_{I\cup\{k\}}.
\end{align*}
Now the identity $dd'=-d'd$ follows from
\begin{multline*}
  \#\{p\in J\colon p<j\}+\#\{q\in J\setminus\!\{j\}\colon q<k\}\\
  =1+\#\{q\in J\colon q<k\}+\#\{p\in J\setminus\!\{k\}\colon p<j\}\mod 2.  \qedhere
\end{multline*}
\end{proof}

By construction, $\HH^*(\zk)$ is the first double cohomology of the bicomplex 
$\bigl(\bigoplus_{I\subset[m]} C^*(\sK_I),d,d'\bigr)$:
\[
  \HH^*(\zk)=H\bigl(H\bigl(\bigoplus_{I\subset[m]} C^*(\sK_I),d\bigr),d'\bigr).
\]

\begin{theorem}\label{1db}
The bicomplexes $\bigl(\bigoplus_{I\subset[m]} C^*(\sK_I),d,d'\bigr)$ and $\bigl(R^*(\sK),d,d'\bigr)$ are isomorphic. Therefore,  $\HH^*(\zk)$ is isomorphic to the first double cohomology of the bicomplex
$\bigl(\Lambda[u_1,\ldots,u_m]\otimes\Z[\sk],d,d'\bigr)$:
\[
  \HH^*(\zk)\cong H\bigl(H\bigl(\Lambda[u_1,\ldots,u_m]\otimes\Z[\sk],d\bigr),d'\bigr).
\]
\end{theorem}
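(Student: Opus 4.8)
The plan is to establish the theorem in two stages: first construct an explicit isomorphism of bicomplexes between $\bigl(\bigoplus_{I\subset[m]} C^*(\sK_I),d,d'\bigr)$ and $\bigl(R^*(\sK),d,d'\bigr)$, and then invoke the acyclicity of the quotient ideal to pass from $R^*(\sK)$ to the full Koszul algebra.

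For the first stage, I would write down the standard correspondence underlying the cellular/Hochster description recalled in Section~\ref{sec_prel}: a monomial $u_Jv_I$ in $R^*(\sK)$ (so $J\cap I=\varnothing$, $I\in\sK$) corresponds to a basis cochain in $C^{|I|-1}(\sK_{I\cup J})$ — concretely, the cochain $\alpha_{I,\,I\cup J}$ dual to the oriented simplex $I$ inside the full subcomplex on $I\cup J$. One then checks that this assignment is a bijection on the distinguished bases and, crucially, that it intertwines \emph{both} differentials. For $d$: applying $d$ to $u_Jv_I$ in $R^*(\sK)$ moves a generator $v_k$ (with $k\in J$) past the Koszul relation, i.e.\ $d(u_Jv_I)=\sum_{k\in J}\varepsilon(k,J)u_{J\setminus\{k\}}v_{I\cup\{k\}}$, which matches the simplicial coboundary $d\alpha_{I,I\cup J}=\sum_{k}\varepsilon(k,I)\alpha_{I\cup\{k\},I\cup J}$ after reconciling the sign conventions $\varepsilon(k,J)$ versus $\varepsilon(k,I)$ using $J\cap I=\varnothing$. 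For $d'$: by the explicit formula in the Construction, $d'(u_Jv_I)=\sum_{j\in J}\varepsilon(j,J)u_{J\setminus\{j\}}v_I$, which shrinks the ambient index set from $I\cup J$ to $I\cup(J\setminus\{j\})$ while keeping the simplex $I$ fixed — exactly the map $\psi$ of \eqref{eq_cohom_differential_component}–\eqref{d'simp} restricting a cochain to a full subcomplex with one fewer vertex. Both verifications are bookkeeping on the $\varepsilon$-signs, entirely parallel to (and implied by) the sign computations already carried out in the preceding Lemma.

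For the second stage, recall from Section~\ref{sec_prel} that the ideal $\mathcal{I}=(v_i^2=u_iv_i=0,\;1\le i\le m)$ inside $\Lambda[u_1,\ldots,u_m]\otimes\Z[\sK]$ is $d$-invariant and $d$-acyclic, so that $H(\Lambda[u]\otimes\Z[\sK],d)\cong H(R^*(\sK),d)=H^*(\zk)$. The issue flagged in the Construction is that $\mathcal{I}$ is \emph{not} $d'$-invariant, so $R^*(\sK)$ is not a $d'$-subcomplex of the Koszul algebra. The way around this is to compute the first double cohomology via the first spectral sequence of each bicomplex: taking $d$-cohomology first, the inclusion $R^*(\sK)\hookrightarrow\Lambda[u]\otimes\Z[\sK]$ induces an isomorphism on $d$-cohomology (this is the acyclicity statement), and this isomorphism is automatically compatible with the induced differential $d'$ on $d$-cohomology, because on $H(-,d)$ the differential $d'$ is well-defined independently of the chain-level model (it is the connecting-type operation determined by the bicomplex structure). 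Hence $H\bigl(H(\Lambda[u]\otimes\Z[\sK],d),d'\bigr)\cong H\bigl(H(R^*(\sK),d),d'\bigr)$, and combining with stage one gives the claimed chain of isomorphisms $\HH^*(\zk)\cong H\bigl(H(\Lambda[u_1,\ldots,u_m]\otimes\Z[\sK],d),d'\bigr)$.

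The main obstacle I anticipate is the second stage rather than the first: one has to argue cleanly that ``$d'$ on $d$-cohomology'' is genuinely intrinsic, so that the $d$-quasi-isomorphism $R^*(\sK)\hookrightarrow\Lambda[u]\otimes\Z[\sK]$ — which is \emph{not} a map of bicomplexes since it fails to commute with $d'$ on the nose — nonetheless descends to an isomorphism commuting with the induced $d'$. The honest way to do this is to exhibit a $d$-chain homotopy $s$ with $[s,d]=\mathrm{id}-\pi$ for a projection $\pi$ onto (a complement of) $\mathcal{I}$, and then check that the error term $[s,d']$, which measures the failure of $\pi$ to be a $d'$-map, becomes a boundary after passing to $d$-cohomology. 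Alternatively, and perhaps more transparently, one can observe that the distinguished bases make the identification $R^*(\sK)\cong\bigoplus_I C^*(\sK_I)$ literal, so that $\HH^*(\zk)$ — which is \emph{defined} as the first double cohomology of $\bigl(\bigoplus_I C^*(\sK_I),d,d'\bigr)$ — equals the first double cohomology of $\bigl(R^*(\sK),d,d'\bigr)$ by stage one, and the passage to the Koszul algebra is then only needed at the level of the $E_1$-page, where the acyclicity of $\mathcal{I}$ under $d$ does all the work and the $d'$-compatibility is inherited because $d'$ on the Koszul algebra restricts to $d'$ on $R^*(\sK)$ on the nose. I would present the argument in this second form.
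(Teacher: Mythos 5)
Your proposal is correct and follows essentially the same route as the paper for the first claim: the paper defines the explicit bijection $f\colon\alpha_{L,I}\mapsto\varepsilon(L,I)\,u_{I\setminus L}v_L$ (your $u_Jv_I\leftrightarrow\alpha_{I,I\cup J}$ with $J=I\setminus L$) and then verifies compatibility with $d$ (citing \cite[Theorem~3.2.9]{bu-pa15}) and with $d'$ by a sign computation, exactly the two checks you outline; you leave the sign $\varepsilon(L,I)$ implicit, but you correctly identify that a single sign reconciliation must make both differentials commute simultaneously.

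For the ``therefore'' part (which the paper leaves implicit), your initial plan involving a $d$-chain homotopy $s$ and the error term $[s,d']$ is a red herring, and your pivot to the ``second form'' is the right move. The only imprecision there is that you justify compatibility of the inclusion $R^*(\sK)\hookrightarrow\Lambda[u_1,\ldots,u_m]\otimes\Z[\sK]$ with $d'$ but do not address $d$. What one should say is that $R^*(\sK)$, realized as the $\Z$-span of the squarefree monomials $u_Jv_I$ with $J\cap I=\varnothing$ and $I\in\sK$, is a $d$-invariant submodule (because $v_Iv_k=v_{I\cup\{k\}}$ for $k\in J$ is either a basis monomial or zero in $\Z[\sK]$) as well as a $d'$-invariant submodule; the complementary summand is the ideal $(v_i^2,u_iv_i)$, which is $d$-acyclic. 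Hence the inclusion is a map of bicomplexes and a $d$-quasi-isomorphism, so it induces an isomorphism on the first double cohomology, completing the chain $\HH^*(\zk)\cong H(H(R^*(\sK),d),d')\cong H(H(\Lambda[u_1,\ldots,u_m]\otimes\Z[\sK],d),d')$. This is exactly what the paper intends when it flags that the ideal is not $d'$-invariant: the quotient map does not work, but the inclusion does.
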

\begin{proof}
Define a homomorphism
\[
\begin{aligned}
  f\colon C^{q-1}(\sK_I)&\longrightarrow R^{q-|I|,2|I|}(\sK),\\
  \alpha_{L,I}&\longmapsto \varepsilon(L,I)\,u_{I\setminus L}v_L,
\end{aligned}
\]
where $\varepsilon(L,I)$ is the sign given by
\[
  \varepsilon(L,I)=\prod_{i\in L}\varepsilon(i,I)=(-1)^{\sum_{\ell\in L}\#\{i\in I\colon i<\ell\}}.
\]
Clearly, $f$ is an isomorphism of free abelian groups. 
We claim that $f$ defines an isomorphism of bicomplexes
\[
  f\colon\bigl(\bigoplus_{I\subset[m]}C^*(\sK_I),d,d'\bigr)\longrightarrow \bigl(R^*(\sK),d,d'\bigr).
\] 
The fact that $f$ commutes with $d$ is verified in~\cite[Theorem~3.2.9]{bu-pa15}. To see that $f$ commutes with $d'$ consider the square
\[
\xymatrix{
  C^{q-1}(\sK_I) \ar[r]^f \ar[d]_{d'} 
  & R^{q-|I|,\,2|I|}(\sK) \ar[d]^{d'} \\
  \bigoplus_{i\in I}C^{q-1}(\sK_{I\setminus\{i\}}) \ar[r]^f & R^{q-|I|+1,\,2|I|-2}(\sK)
} 
\]
An element $\alpha_{L,I}\in C^{q-1}(\sK_I)$ with $|L|=q$ is mapped as follows:
\[
\xymatrix{
  \alpha_{L,I} \ar@{|->}[r]^-f \ar@{|->}[dd]_{d'} & 
  \varepsilon(L,I)u_{I\setminus L}v_L \ar@{|->}[d]^{d'}\\
  &\sum_{i\in I\setminus L}\varepsilon(L,I)\varepsilon(i,I\setminus\! L)u_{I\setminus(L\cup\{i\})}v_L\\
  (-1)^{|L|}\sum_{i\in I\setminus L}\varepsilon(i,I)\alpha_{L,I\setminus\!\{i\}} \ar@{|->}[r]^-f 
  & (-1)^{|L|}\sum_{i\in I\setminus L}\varepsilon(i,I)
  \varepsilon(L,I\setminus\!\{i\})u_{I\setminus(L\cup\{i\})}v_L
}  
\]
We therefore need to check that
\[
   \varepsilon(L,I)\varepsilon(i,I\setminus\! L)=(-1)^{|L|}\varepsilon(i,I)\varepsilon(L,I\setminus\!\{i\}),
\]
which is equivalent to
\begin{multline*}
  \sum_{\ell\in L}\#\{p\in I\colon p<\ell \}+\#\{q\in I\setminus\!L\colon q<i\}
  \\=|L|+\#\{q\in I\colon q<i\}+\sum_{\ell\in L}\#\{p\in I\setminus\!\{i\}\colon p<\ell\}\mod 2.
\end{multline*}
This identity rewrites as
\[
  \#\{\ell\in L\colon i<\ell\}=|L|+\#\{q\in L\colon q<i\}\mod 2, 
\]
which clearly holds.
\end{proof}

\begin{theorem}\label{CHDGA}
$\CH^\ast(\zk)= (H^\ast(\zk), d')$ is a commutative differential graded algebra. 
Therefore, the double cohomology $\HH^*(\zk)$ is a graded commutative algebra, with the product induced from the cohomology product on~$H^*(\zk)$.
\end{theorem}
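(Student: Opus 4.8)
The plan is to exploit the algebraic model from Theorem~\ref{1db}: realise $\CH^\ast(\zk)$ as a subquotient of the Koszul bicomplex and show that the differential $d'$ is a derivation of the relevant product. By Theorems~\ref{zkcoh} and~\ref{1db}, the bigraded algebra $H^\ast(\zk)$ is $H(\Lambda[u_1,\ldots,u_m]\otimes\Z[\sK],d)$, and the second differential $d'$ on $H^\ast(\zk)$ is induced by the bicomplex differential $d'$ defined by $d'u_j=1$, $d'v_j=0$ and the Leibniz rule. The first thing to check is therefore that $d'$ is a derivation of $\Lambda[u_1,\ldots,u_m]\otimes\Z[\sK]$ with respect to the sign conventions coming from the bigrading: since $\bideg u_i=(-1,2)$, the element $u_i$ should be treated as odd, and one must verify that the rule $d'(xy)=d'(x)y+(-1)^{|x|}x\,d'(y)$ (with $|x|$ the appropriate degree, here the cohomological total degree $-k+2\ell$, so that the $u$'s contribute the sign) is consistent with the explicit formula $d'(u_Jv_I)=\sum_{j\in J}\varepsilon(j,J)u_{J\setminus\{j\}}v_I$. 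This is a short sign bookkeeping computation on square-free monomials.

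Next I would pass to cohomology. Since $d$ and $d'$ anticommute ($dd'=-d'd$), $d'$ descends to a well-defined map on $H(\Lambda[u_1,\ldots,u_m]\otimes\Z[\sK],d)=H^\ast(\zk)$, and a derivation on a differential graded algebra with anticommuting second differential induces a derivation on the $d$-cohomology: if $[x],[y]$ are $d$-cocycles, then $d'(xy)=d'(x)y\pm x\,d'(y)$ represents $d'([x][y])$, and $d'(x),d'(y)$ are again $d$-cocycles because $d(d'x)=-d'(dx)=0$. Hence $\CH^\ast(\zk)=(H^\ast(\zk),d')$ is a differential graded algebra. Graded commutativity is inherited directly from the graded commutativity of $H^\ast(\zk)$ stated in Theorem~\ref{zkcoh}, since $d'$ does not affect the product, only equips $H^\ast(\zk)$ with a differential. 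Finally, because $d'$ is a derivation, its cohomology $\HH^\ast(\zk)=H(H^\ast(\zk),d')$ inherits a well-defined graded commutative product: represent classes in $\HH^\ast$ by $d'$-cocycles in $H^\ast(\zk)$, multiply them there, and check the product of $d'$-cocycles is a $d'$-cocycle (Leibniz) and that changing a representative by a $d'$-coboundary changes the product by a $d'$-coboundary (again Leibniz). This gives the desired algebra structure on $\HH^\ast(\zk)$ induced from the cohomology product on $H^\ast(\zk)$.

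The main obstacle I anticipate is purely the sign discipline: one must fix, once and for all, which grading controls the Koszul sign in the Leibniz rule for $d'$, and then confirm that the combinatorial signs $\varepsilon(j,J)$ in the monomial formula for $d'$ match that choice — and, simultaneously, that these same signs are compatible with the signs $\varepsilon(i,I)$ appearing in the simplicial description~\eqref{d'simp} of $d'$ under the isomorphism $f$ of Theorem~\ref{1db}. Once the derivation property of $d'$ on $\Lambda[u_1,\ldots,u_m]\otimes\Z[\sK]$ is established with the correct signs, everything else (descent to $d$-cohomology, then to $d'$-cohomology, preservation of graded commutativity) is a formal consequence of standard facts about derivations on differential graded algebras and requires no further geometric input. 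An alternative, equivalent route would be to work entirely on the simplicial side, using the maps $\psi_{p;i,I}$ and the ring structure on $\bigoplus_I \widetilde H^\ast(\sK_I)$ from Theorem~\ref{zkcoh} directly; I would still phrase the final proof through the Koszul model since the Leibniz rule is most transparent there.
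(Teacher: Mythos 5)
Your proposal is correct and follows essentially the same route as the paper: use the Koszul bicomplex model, observe that $d'$ is a derivation at the cochain level, use $dd'=-d'd$ to descend to $H^*(\zk)$, and conclude the Leibniz rule there and hence the algebra structure on $\HH^*(\zk)$. The one "obstacle" you flag — verifying that the explicit monomial formula for $d'$ is consistent with the Leibniz rule — is a non-issue, because $d'$ is \emph{defined} on $\Lambda[u_1,\ldots,u_m]\otimes\Z[\sK]$ by setting $d'u_j=1$, $d'v_j=0$ and extending by the Leibniz rule, so the derivation property holds by construction and the monomial formula is a consequence rather than something that needs to be reconciled with it.
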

\begin{proof}
We need to verify that the second differential $d'$ satisfies the Leibniz formula with respect to the product in $H^*(\zk)$. Take elements
\[
  \alpha,\beta\in H^*(\zk)= H\bigl(\Lambda[u_1,\ldots,u_m]\otimes\Z[\sk],d\bigr).
\]
and choose their representing $d$-cocycles
\[
  a,b\in \Lambda[u_1,\ldots,u_m]\otimes\Z[\sk],
\]
so that $\alpha=[a]$, $\beta=[b]$.
The bicomplex relations $dd'=-d'd$ imply that $[d'a]=d'[a]$ in~$H^*(\zk)$. By the definition of $d'$, it satisfies the Leibniz formula with respect to the product in $\Lambda[u_1,\ldots,u_m]\otimes\Z[\sk]$:
\[
  d'(ab)=(d'a)b+(-1)^{|a|}a(d'b).
\]
Hence, we have 
\begin{multline*}
  d'(\alpha\beta)=d'([a][b])=d'([ab])=[d'(ab)]\\=
  [(d'a)b+(-1)^{|a|}a(d'b)]=
  (d'\alpha)\beta+(-1)^{|\alpha|}\alpha(d'\beta),
\end{multline*}
as needed.
\end{proof}

\begin{proposition}\label{2db}\ 
\begin{itemize}
\item[(a)]
For any $\sK$, the $d'$-cohomology of $\Lambda[u_1,\ldots,u_m]\otimes\Z[\sk]$ is zero:
\[
  H\bigl(\Lambda[u_1,\ldots,u_m]\otimes\Z[\sk],d'\bigr)=0.
\]

\item[(b)]
If $\sK\ne\varDelta^{m-1}$ (the full simplex on~$[m]$), then the $d'$-cohomology of the bicomplexes 
$\bigoplus_{I\subset[m]} C^*(\sK_I)$ and $R^*(\sK)$ is zero:
\[
  H\bigl(\bigoplus_{I\subset[m]} C^*(\sK_I),d'\bigr)=H\bigl(R^*(\sK),d'\bigr)=0.
\]
Therefore, the second double cohomology and the total cohomology of the bicomplexes 
$\bigl(\bigoplus_{I\subset[m]} C^*(\sK_I),d,d'\bigr)$ and $\bigl(R^*(\sK),d,d'\bigr)$ are zero unless 
$\sK=\varDelta^{m-1}$.

\item[(c)] If $\sK=\varDelta^{m-1}$, then the only nonzero $d'$-cohomology group of 
$\bigoplus_{I\subset[m]} C^*(\sK_I)$ and $R^*(\sK)$ is $H^{2m}\cong\Z$, represented by $\alpha_{[m],[m]}$ and $v_1\cdots v_m$, respectively.
\end{itemize}
\end{proposition}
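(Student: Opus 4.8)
The plan is to prove all three parts by working with the concrete generators and the Leibniz rule, reducing everything to a single algebraic-homotopy argument. For part~(a), the key observation is that $d'$ together with an appropriate ``multiplication by $u$'' operator forms a Koszul-type contraction. Concretely, since $d'u_j=1$ and $d'$ is a derivation, the algebra $\Lambda[u_1,\ldots,u_m]\otimes\Z[\sK]$ splits as a tensor product over $j$ of the two-step complexes $(\Lambda[u_j]\otimes\Z[\sK]$-factor$)$, each of which is acyclic because $d'$ maps $u_j\mapsto 1$; more precisely, one can build a contracting homotopy $h$ with $hd'+d'h=\mathrm{id}$ by choosing, say, $h$ to be ``insert $u_1$ in front'' on monomials not containing $u_1$ and $0$ otherwise, then check $hd'+d'h=\mathrm{id}$ directly on the monomial basis $u_Jv_I$. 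Since this complex is free and acyclic, its $d'$-cohomology vanishes. The sign bookkeeping in $\varepsilon(j,J)$ is the only thing to watch, but it is the same computation as in the bicomplex lemma already proved.

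For part~(b), I would argue on the basis of monomials $u_Jv_I$ with $J\cap I=\varnothing$ spanning $R^*(\sK)$ (equivalently the basis cochains $\alpha_{L,I}$). Here the previous homotopy does not literally work because inserting $u_1$ can collide with $v_1$ when $1\in I$. Instead, fix a vertex, say we need some vertex, and use that $\sK\ne\varDelta^{m-1}$ means there is a nonface; but more efficiently, decompose $R^*(\sK)$ as a direct sum of $d'$-subcomplexes indexed by the ``support'' $I\cup J\subseteq[m]$, i.e.\ group monomials by which variables appear at all. On the piece where the support is a fixed set $S$, $d'$ only moves $u$'s to nothing, so for $S\ne[m]$ pick $t\in[m]\setminus S$ — this does not help directly either. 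The cleanest route: index $R^*(\sK)$ by $I\in\sK$ and observe $d'$ acts only on the $u_J$ part with $J\subseteq [m]\setminus I$ fixed $I$; so $R^*(\sK)=\bigoplus_{I\in\sK}\bigl(\Lambda[u_j:j\notin I]\otimes\Z v_I\bigr)$ as $d'$-complexes, and each summand is the full exterior Koszul complex on $m-|I|$ generators, which is acyclic whenever $m-|I|>0$, i.e.\ $I\ne[m]$. Since $[m]\in\sK$ is impossible when $\sK\ne\varDelta^{m-1}$, every summand is acyclic, giving $H(R^*(\sK),d')=0$; the isomorphism $f$ of Theorem~\ref{1db} transports this to $\bigoplus_I C^*(\sK_I)$. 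The statement about the second double cohomology and total cohomology then follows formally: if one of the two iterated cohomologies of a first-quadrant-type bicomplex vanishes, both spectral sequences degenerate to zero, hence the total cohomology is zero.

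For part~(c), when $\sK=\varDelta^{m-1}$ the only summand in the decomposition $R^*(\sK)=\bigoplus_{I\in\sK}\Lambda[u_j:j\notin I]\otimes\Z v_I$ with vanishing exterior-algebra factor is $I=[m]$, contributing $\Z v_1\cdots v_m$ in bidegree $(0,2m)$, i.e.\ total degree $2m$; all other summands are acyclic as in~(b). Tracing $f^{-1}$ sends $v_1\cdots v_m$ to $\alpha_{[m],[m]}$ (up to the sign $\varepsilon([m],[m])=\pm1$, which one checks is $+1$, or absorb it into the generator), which represents the top cohomology class of the simplex $\varDelta^{m-1}$ viewed as $\sK_{[m]}$.

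The main obstacle I anticipate is getting the direct-sum decomposition of $R^*(\sK)$ (and of $\bigoplus_I C^*(\sK_I)$) into $d'$-invariant pieces stated cleanly, together with verifying that $d'$ restricted to each piece is genuinely the standard acyclic Koszul exterior differential with the right signs — the $\varepsilon(j,J)$ signs must be reconciled with the standard Koszul signs, and the extra overall sign $(-1)^{p+1}$ in \eqref{d'simp} must be checked not to affect acyclicity (it does not, since rescaling a differential by signs on graded pieces preserves exactness). Everything else is routine monomial bookkeeping that parallels the computations already carried out in the preceding lemmas.
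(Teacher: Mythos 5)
Your proof is correct and follows essentially the same route as the paper: split $(\Lambda[u_1,\ldots,u_m]\otimes\Z[\sK],d')$ into a direct sum of exterior complexes indexed by the monomials in $\Z[\sK]$, and split $(R^*(\sK),d')$ into a direct sum over $I\in\sK$ of exterior Koszul complexes $\Lambda[u_j\colon j\notin I]\otimes\Z v_I$, each acyclic unless $I=[m]$, which happens only when $\sK=\varDelta^{m-1}$; then transport through the isomorphism $f$ of Theorem~\ref{1db}. The contracting homotopy $h$ you sketch for part~(a) is a valid alternative (and indeed verifies the acyclicity of the exterior complexes explicitly), but it is not needed once you have the direct-sum decomposition. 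One small imprecision: from $H(-,d')=0$ it does not follow that ``both spectral sequences degenerate to zero''; rather, the spectral sequence with $E_1=H(-,d')$ collapses, forcing the total cohomology of the bounded bicomplex $R^*(\sK)$ to vanish, after which the \emph{other} spectral sequence (with $E_2=\HH^*(\zk)$) merely converges to zero without degenerating at any finite page in general --- this nondegeneracy is precisely what makes Corollary~\ref{cor_euler_char_of_HH_for_simplex} and the higher differentials interesting. The vanishing of the second double cohomology, $H(H(-,d'),d)$, is immediate since $H(-,d')=0$; no spectral sequence is needed for that claim.
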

\begin{proof}
For (a), the differential $d'$ does not change the $v$-part of the monomials $u_Jv_{i_1}^{p_1}\cdots v_{i_k}^{p_k}\in \Lambda[u_1,\ldots,u_m]\otimes\Z[\sk]$. Therefore, the complex $(\Lambda[u_1,\ldots,u_m]\otimes\Z[\sk],d'\bigr)$ splits into the sum over monomials in $\Z[\sk]$ of the exterior complexes $\bigl(\Lambda[u_1,\ldots,u_m],d'\bigr)$ with $d'u_I=\sum_{i\in I}\varepsilon(i,I)u_{I\setminus\{i\}}$, which are clearly acyclic.

\smallskip

For (b) and (c), the argument is similar: the monomials $u_Jv_I\in R^*(\sK)$ with $J\cap I=\varnothing$ and fixed $I\in\sK$ span a $d'$-subcomplex isomorphic to the exterior complex 
$\bigl(\Lambda[u_j\colon j\in[m]\setminus\!I],d'\bigr)$. It is acyclic unless $I=[m]$, which leaves us with the monomial $v_{[m]}=v_1\cdots v_m$. It is nonzero in $R^*(\sK)$ if only if 
$\sK=\varDelta^{m-1}$. 

The statement about the total cohomology follows by considering the standard spectral sequence of the bicomplex.
\end{proof}

\begin{corollary}\label{cor_euler_char_of_HH_for_simplex}
If $\sK\ne\varDelta^{m-1}$, then the Euler characteristic of $\HH^*(\zk)$ is zero.
\end{corollary}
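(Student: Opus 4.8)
The plan is to derive the corollary directly from Proposition~\ref{2db} via the spectral sequence of the bicomplex. First I would recall that the bicomplex $\bigl(R^*(\sK),d,d'\bigr)$ (equivalently $\bigl(\bigoplus_{I\subset[m]} C^*(\sK_I),d,d'\bigr)$) has a well-defined Euler characteristic at the level of chain groups, since each bigraded component $R^{-k,2\ell}(\sK)$ is a finitely generated free abelian group and only finitely many are nonzero. The alternating sum $\sum_{k,\ell}(-1)^{k}\rank R^{-k,2\ell}(\sK)$ — with the appropriate single sign coming from the total degree — is therefore a finite integer, and it is unchanged by passing to the cohomology of either differential, since taking homology of a complex of finitely generated abelian groups preserves the Euler characteristic.

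Next I would run this observation through the two pages. Taking $d$-cohomology first turns $R^*(\sK)$ into $H^*(\zk)$, so the Euler characteristic of the bicomplex equals the Euler characteristic of $\bigl(H^*(\zk),d'\bigr)=\CH^*(\zk)$, which in turn equals the Euler characteristic of $\HH^*(\zk)$ because $d'$ is a differential on finitely generated groups. On the other hand, taking $d'$-cohomology first gives, by Proposition~\ref{2db}(b), the zero complex when $\sK\ne\varDelta^{m-1}$; hence the Euler characteristic of the bicomplex is $0$. Combining the two computations yields $\chi(\HH^*(\zk))=0$, which is exactly the claim. Alternatively one can phrase this using the first spectral sequence of the bicomplex, which has $E_2=\HH^*(\zk)$ and, by Proposition~\ref{2db}(b), converges to $0$; since the Euler characteristic is constant along the pages of a spectral sequence of finitely generated groups, $\chi(E_2)=\chi(E_\infty)=0$.

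I would be slightly careful about one bookkeeping point, which is really the only subtlety here: fixing the sign convention so that "Euler characteristic" means $\sum (-1)^{\text{total degree}}\rank$, and checking that the total degree of the bidegree-$(-k,2\ell)$ piece — namely $-k+2\ell$ — has a fixed parity equal to that of $k$ (since $2\ell$ is even), so that the sign $(-1)^{-k+2\ell}=(-1)^{k}$ is the natural one for both $d$ (bidegree $(1,0)$) and $d'$ (bidegree $(1,-2)$), each of which changes the total degree by $1$. With this in place, the invariance of $\chi$ under taking homology with respect to either differential is the standard rank–nullity argument, and nothing else is needed. The main (very minor) obstacle is thus simply making sure the finiteness hypotheses are in force — which they are, since $R^*(\sK)$ is a finitely generated free abelian group — and invoking Proposition~\ref{2db}(b) for the vanishing input.
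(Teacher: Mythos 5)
Your proof is correct and matches the paper's intended argument: you compute the Euler characteristic of the bicomplex $(R^*(\sK),d,d')$ in two ways, using invariance of $\chi$ under taking cohomology, and get $\chi(\HH^*(\zk))$ from one order of differentials and $0$ (by Proposition~\ref{2db}(b)) from the other. The paper does not spell out a proof but places the corollary directly after Proposition~\ref{2db} and remarks in the introduction that it comes from the bicomplex spectral sequence, which is exactly the reformulation you give at the end; it also notes the alternative, slightly shorter route you implicitly touch on, namely that $\chi(H^*(\zk))=0$ for $\sK\ne\varDelta^{m-1}$ and that passing from $\CH^*(\zk)=(H^*(\zk),d')$ to its cohomology $\HH^*(\zk)$ already preserves $\chi$, which bypasses Proposition~\ref{2db} entirely. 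Your attention to the sign convention and to finiteness of $R^*(\sK)$ is appropriate but, as you say, routine.
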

This also follows from the fact that the Euler characteristic of $H^*(\zk)$ is zero unless 
$\sK=\varDelta^{m-1}$, see~\cite[Corollary~4.6.3]{bu-pa15}.

\begin{remark}
Proposition~\ref{2db} (a) does not imply that the total cohomology of the bicomplex 
$\bigl(\Lambda[u_1,\ldots,u_m]\otimes\Z[\sk],d,d'\bigr)$ is zero, because the filtration in the spectral sequence of the bicomplex is infinite, and there are issues with the convergence, unless we consider an appropriate completion. For example, $1$ is a $(d'+d)$-cocycle representing a nontrivial total cohomology class. On the other hand $1$ becomes a coboundary if we consider infinite series:
\[
  (d'+d)(u_1-u_1v_1+u_1v_1^2-u_1v_1^3+\cdots)=1.
\]
This issue does not show up with $(R^*(\sK),d,d')$, because here the filtration is finite. If $\sK\ne\varDelta^{m-1}$, then any missing face $I\notin\sK$ gives rise to a polynomial $p$ with 
$(d'+d)p=1$. For example, if $I=\{1,2,\ldots,k\}$, then
\[
  (d'+d)(u_1-u_2v_1+u_3v_2v_1-\cdots+(-1)^k u_kv_{k-1}v_{k-2}\cdots v_1)=1.
\] 
\end{remark}

The functorial properties of the double cohomology~$\HH^*(\zk)$ follow from its definition and the appropriate properties of~$H^*(\zk)$ stated in Proposition~\ref{funcH}:

\begin{proposition}\label{functorialH}
Let $\mathcal L\subset\sK$ be simplicial complexes on the same vertex set~$[m]$. There are homomorphisms
\begin{itemize}
\item[(a)] $\bigl(\Lambda[u_1,\ldots,u_m]\otimes\Z[\sk],d,d'\bigr)\to  \bigl(\Lambda[u_1,\ldots,u_\ell]\otimes\Z[\mathcal L],d,d'\bigr),$
\item[(b)] $\bigl(R^*(\sK),d,d'\bigr)\to\bigl(R^*(\mathcal L),d,d'\bigr),$
\item[(c)] $\CH^\ast(\zk) \to \CH^\ast(\zl)$, 
\item[(d)] $\HH^*(\mathcal Z_{\mathcal K})\to\HH^*(\mathcal Z_{\mathcal L}).$
\end{itemize}
Furthermore, if $\sK_I$ is a full subcomplex for some $I\subset[m]$, then we have homomorphisms
\begin{itemize}
\item[(e)] $\bigl(\Lambda[u_i\colon i\in I]\otimes\Z[\sk_I],d,d'\bigr)\to  \bigl(\Lambda[u_1,\ldots,u_m]\otimes\Z[\mathcal K],d,d'\bigr),$
\item[(f)] $\bigl(R^*(\sK_I),d,d'\bigr)\to  \bigl(R^*(\mathcal K),d,d'\bigr),$
\item[(g)] $\CH^*(\mathcal Z_{\sk_I})\to\CH^*(\zk),$ 
\item[(h)] $\HH^*(\mathcal Z_{\sk_I})\to\HH^*(\zk).$
\end{itemize}
There are also homology versions of the homomorphisms, which map between $H_*$, $\CH_*$ and $\HH_*$ in the opposite direction.
\end{proposition}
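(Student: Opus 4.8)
The plan is to obtain all the listed maps by applying the double-cohomology construction to the functorial structure already recorded in Proposition~\ref{funcH}, and to observe in each case that the maps respect the second differential $d'$. First I would treat (a) and (e): the homomorphism $\Lambda[u_1,\ldots,u_m]\otimes\Z[\sk]\to\Lambda[u_1,\ldots,u_\ell]\otimes\Z[\mathcal L]$ of Proposition~\ref{funcH}(b) sends $u_i,v_i\mapsto 0$ for $i>\ell$ and is the identity on the remaining generators; since $d'$ is determined by $d'u_j=1$, $d'v_j=0$ and the Leibniz rule, it is immediate that this map commutes with $d'$ as well as with $d$, so it is a morphism of bicomplexes. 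For (e) one uses instead the inclusion $\Lambda[u_i\colon i\in I]\otimes\Z[\sk_I]\hookrightarrow\Lambda[u_1,\ldots,u_m]\otimes\Z[\sk]$ coming from Proposition~\ref{funcH}(e); again $d'$ is defined by the same generator-wise formula on both sides, so the inclusion is a bicomplex map. The same reasoning, applied to the quotients, gives (b) and (f) for $R^*(-)$, with the one caveat — already noted in the Construction of Section~\ref{bicom} — that $(R^*(\sK),d')$ is only a complex, not a differential graded algebra; this causes no problem here since we only need it to be a morphism of bicomplexes.

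Next I would deduce (c), (d), (g), (h) by passing to cohomology. Taking $d$-cohomology of the bicomplex maps in (a), (b), (e), (f) and using Theorem~\ref{zkcoh} together with Theorem~\ref{1db} identifies the source and target with $(H^*(\zk),d')=\CH^*(\zk)$ and $(H^*(\zl),d')=\CH^*(\zl)$ (resp. the full-subcomplex versions), and the induced map is exactly the cohomology homomorphism $H^*(\zk)\to H^*(\zl)$ of Proposition~\ref{funcH}(d) (resp. (f)); since this map now automatically intertwines the differentials $d'$, it is a map of cochain complexes, giving (c) and (g). Taking one further cohomology, with respect to $d'$, yields (d) and (h). Equivalently, one can phrase (c), (d), (g), (h) directly on the Hochster side: the maps $\widetilde H^*(\sk_I)\to\widetilde H^*(\mathcal L_I)$ induced by $\mathcal L_I\subset\sk_I$ (resp. the retraction maps for full subcomplexes) are compatible with the inclusion-induced maps $\psi_{p;i,I}$ that define $d'$ in~\eqref{d'simp}, because a square of full-subcomplex inclusions commutes; this compatibility is the only thing that needs checking, and it is a formal naturality statement.

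Finally, the homology versions are obtained by the dual construction: one uses the homology analogue of Proposition~\ref{funcH} (the maps on $H_*$ going in the opposite direction), notes that the boundary $\partial'$ of Section~\ref{homology_HH} is built from the inclusion-induced maps $\phi_{p;I,j}$ which are natural in exactly the same way, and passes to double homology. The main obstacle — and it is a mild one — is bookkeeping with the signs $\varepsilon(j,I)$ and the extra $(-1)^{p+1}$ in the definitions of $d'$ and $\partial'$: one must confirm that restricting to a subset of vertices or to a full subcomplex does not disturb these signs. This is immediate because $\varepsilon(i,I)=(-1)^{\#\{j\in I\colon j<i\}}$ depends only on the relative order of $i$ within $I$, which is preserved under the relevant inclusions; so no genuine difficulty arises, and the proposition follows formally from the constructions of Sections~\ref{sec_double_cohom} and~\ref{bicom}.
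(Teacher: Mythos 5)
The reasoning you give for part (a) — and hence (b) — is incorrect as stated, and it is exactly the point that the hypothesis ``on the same vertex set $[m]$'' is there to address. You write that the map of Koszul algebras from Proposition~\ref{funcH}(b) ``sends $u_i,v_i\mapsto 0$ for $i>\ell$'' and then claim it is ``immediate'' that this commutes with $d'$. It does not: if there were an index $i>\ell$ with $u_i\mapsto 0$, then the image of $d'u_i=1$ would be $1$, while $d'$ applied to the image $0$ would be $0$. So a map killing some $u_i$ is \emph{never} a $d'$-chain map. This is precisely the content of the remark that follows Proposition~\ref{functorialH} in the paper. What actually makes (a) work is that $\mathcal L$ and $\sK$ share the vertex set $[m]$, so $\ell=m$ and \emph{no} exterior generators are sent to zero; the map $\Lambda[u_1,\ldots,u_m]\otimes\Z[\sK]\to\Lambda[u_1,\ldots,u_m]\otimes\Z[\mathcal L]$ is the identity on every $u_j$ and $v_j$ and is just the quotient by the larger Stanley--Reisner ideal $\mathcal I_{\mathcal L}\supset\mathcal I_{\sK}$. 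Since that ideal is generated by monomials in the $v_i$'s and $d'$ does not change the $v$-part of a monomial, the ideal is $d'$-invariant and the quotient map commutes with $d'$. (For $R^*$ the same remark applies: the kernel of $R^*(\sK)\to R^*(\mathcal L)$ is spanned by the $u_Jv_I$ with $I\notin\mathcal L$, and $d'$ fixes the $v$-part, so the kernel is $d'$-stable.)

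Once this is fixed, the remaining steps you outline are sound and agree with the intended argument: (e), (f) are inclusion maps that are the identity on the generators they see, hence obviously $d'$-equivariant; (c), (d), (g), (h) follow by taking $d$-cohomology, then $d'$-cohomology, of the bicomplex morphisms, or equivalently by the naturality of $\psi_{p;i,I}$ with respect to maps of full subcomplexes as you say; and the sign bookkeeping involving $\varepsilon(j,I)$ is indeed harmless because it only depends on the relative order inside $I$. But as written, your justification for the heart of the statement is false, and it would lead you to (incorrectly) believe the proposition extends to subcomplexes on a smaller vertex set, contrary to the remark in the paper.
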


\begin{remark}
If $\sK\subset\mathcal L$ is a subcomplex on a smaller vertex set $[\ell]$, $\ell<m$, then we do not get a map of bicomplexes $\bigl(\Lambda[u_1,\ldots,u_m]\otimes\Z[\sk],d,d'\bigr)\to
  \bigl(\Lambda[u_1,\ldots,u_\ell]\otimes\Z[\mathcal L],d,d'\bigr)$,
because $u_i\mapsto 0$ for $i\notin[\ell]$, but $d'u_i=1$. This can be fixed by considering ghost vertices.
\end{remark}

We finish this section with the following observation about Proposition \ref{functorialH}~(c) and its homology version. 
Using the Hochster decompositions of $H^\ast(\zk)$ and $H^\ast(\mc{Z}_\mc{L})$, we may write $f\colon \CH^\ast(\zk) \to \CH^\ast(\mc{Z}_\mc{L})$ as 
\begin{equation*}
\bigoplus_{p\geq 0, I\subset[m]} f_{I}^p \colon \bigoplus_{p\geq 0, I\subset[m]} \widetilde{H}^p(\sk_I) \to \bigoplus_{p\geq 0, I\subset[m]}  \widetilde{H}^p(\mc{L}_I), 
\end{equation*}
where $f_{I}^p \colon \widetilde{H}^p(\sk_I) \to \widetilde{H}^p(\mc{L}_I)$. Similarly, the map $g\colon \CH_\ast(\mc{Z}_\mc{L}) \to \CH_\ast(\zk)$ can also be decomposed into $\bigoplus_{p\geq 0, I\subset [m]}g_{p,I}$, where $g_{p, I}\colon \widetilde{H}_p(\mc{L}_I) \to \widetilde{H}_p(\sk_I).$ Hence, each of the two chain maps $f$ and $g$ can be discussed degree by degree as follows:
\begin{equation}\label{eq_functoriality_deg_by_deg}
\begin{tikzcd}
   \CH^p(\zk)\arrow[equal]{d} \arrow{r}{f^p} & \CH^p(\mc{Z}_\mc{L}) \arrow[equal]{d} \\
\displaystyle \bigoplus_{I\subset[m]}\widetilde{H}^p(\sk_I)\arrow{r}{\underset{I\subset[m]}{\bigoplus}f_I^p}&\displaystyle \bigoplus_{I\subset[m]}\widetilde{H}^p(\mc{L}_I)
\end{tikzcd}
\quad \text{and} \quad 
\begin{tikzcd}
   \CH_p(\mc{Z}_\mc{L})\arrow[equal]{d} \arrow{r}{g_p} & \CH_p(\zk) \arrow[equal]{d} \\
\displaystyle \bigoplus_{I\subset[m]}\widetilde{H}_p(\mc{L}_I)\arrow{r}{\underset{I\subset[m]}{\bigoplus} g_{p,I}}&\displaystyle \bigoplus_{I\subset[m]}\widetilde{H}_p(\mc{K}_I)
\end{tikzcd}
\end{equation}
respectively. We revisit \eqref{eq_functoriality_deg_by_deg} in Section~\ref{sec_mcycle}.

\section{Relation to the torus action}\label{relta}
Recall from~\cite[Propositions~3.1.5, 4.2.4]{bu-pa15} that the face ring $\Z[\sK]$ and the moment-angle complex $\zk$ are functorial with respect to simplicial maps $\sK\to\sK'$. 

Given simplicial complexes $\sK$ and $\sK'$ on the sets $[m]$ and $[n]$, respectively, a set map $\varphi\colon[m]\to [n]$ induces a \emph{simplicial map} $\varphi\colon\sK\to\sK'$ if $\varphi(I)\in\mathcal\sK'$ for any $I\in\sK$. For the corresponding face rings $\Z[\sK]=\Z[v_1,\ldots,v_m]/\mathcal I_\sK$ and $\Z[\sK']=\Z[v'_1,\ldots,v'_{n}]/\mathcal I_{\sK'}$, a simplicial map 
$\varphi\colon\sK\to\sK'$ induces a homomorphism
\begin{equation}\label{phi*}
  \varphi^*\colon\Z[\sK']\to\Z[\sK],\quad\varphi^*(v'_j)=\sum_{i\in\varphi^{-1}(j)}v_i.
\end{equation}

Furthermore, a simplicial map $\varphi\colon\sK\to\sK'$ induces a map $\varphi_{\mathcal Z}\colon\zk\to\mathcal Z_{\sK'}$. The latter is obtained by restricting to $\zk$ the map of polydiscs
\[
  (D^2)^m\to(D^2)^{n},\quad (z_1,\ldots,z_m)\mapsto(z'_1,\ldots,z'_{n}), 
\]
where
\[
  z'_j=\prod_{i\in\varphi^{-1}(j)}z_i,\quad\text{ for }j=1,\ldots,m,
\]
and $z'_j=1$ if $\varphi^{-1}(j)=\varnothing$. Note that we use the monoid structure on $D^2$ in this definition.

The isomorphisms of Theorem~\ref{zkcoh} are functorial with respect to the maps defined above. More precisely, a simplicial map $\varphi\colon\sK\to\sK'$ induces a morphism of differential graded algebras
\[
  \bigl(\Lambda[u'_1,\ldots,u'_{n}]\otimes\Z[\sK'],d\bigr)\to
  \bigl(\Lambda[u_1,\ldots,u_m]\otimes\Z[\sK],d\bigr),
\]
which is defined on the generators $v'_j$ by~\eqref{phi*} and on the $u'_j$ by the same formula. The induced homomorphism in cohomology coincides with $\varphi_{\mathcal Z}^*\colon H^*(\mathcal Z_{\sK'})\to H^*(\zk)$.

The torus action $T^m\times\zk\to\zk$ can be regarded as an example of the functoriality property described above. To see this, we need to bring ghost vertices into consideration. Recall that, for a simplicial complex $\sK$ on $[m]$, a \emph{ghost vertex} is a one-element subset $\{i\}\subset[m]$ such that $\{i\}\notin\sK$. Let $(\varnothing,[m])$ denote the simplicial complex on $[m]$ consisting of $\varnothing$ only (therefore having $m$ ghost vertices). The corresponding moment-angle complex is an $m$-torus: $\mathcal Z_{\varnothing,[m]}=T^m$.

Given $\sK$ on $[m]$, consider the simplicial complex $(\varnothing,[m'])\sqcup\sK$ on $[m']\sqcup[m]$ with $m=m'$, namely we add $m$ new ghost vertices labeled by $\{1', \dots, m'\}$. Then the simplicial map
$\varphi\colon(\varnothing,[m'])\sqcup\sK\to\sK$ identifying $i'$ with $i$ for each $i=1,\ldots,m$ gives rise to
$\mathcal Z_{(\varnothing,[m'])\sqcup\sK}=T^m\times\zk$ and $\varphi_{\mathcal Z}$ is the standard torus action map $T^m\times\zk\to\zk$.

Given a circle action $S^1\times X\to X$ on a space $X$, the induced map in cohomology has the form
\[
  H^*(X)\to H^*(S^1\times X)=\Lambda[u]\otimes H^*(X), \quad
  \alpha\mapsto 1\otimes\alpha+u\otimes\iota(\alpha) ,
\]
where $u\in H^1(S^1)$ is a generator and $\iota\colon H^*(X)\to H^{*-1}(X)$ is a derivation.

\begin{proposition}
The derivation corresponding to the $i^{th}$ coordinate circle action $S^1_i\times\zk\to\zk$ is induced by the derivation $\iota_i$ of the Koszul complex $(\Lambda[u_1,\ldots,u_m]\otimes\Z[\sK],d)$ given on the generators by
\[
  \iota_i(u_j)=\delta_{ij},\quad \iota_i(v_j)=0,\quad\text{ for } j=1,\ldots,m,
\]
where $\delta_{ij}$ is the Kronecker delta.

The derivation corresponding to the diagonal circle action $S^1_d\times\zk\to\zk$ coincides with the differential $d'$ given by~\eqref{d'uv}.
\end{proposition}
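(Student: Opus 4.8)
The plan is to unwind both statements through the functoriality of the Koszul complex described just above, treating the coordinate circle actions and the diagonal action as instances of simplicial maps involving ghost vertices. First I would set up the $i$th coordinate case: the action $S^1_i\times\zk\to\zk$ is the restriction of the torus action, hence arises from the simplicial map $\varphi_i\colon(\varnothing,\{i'\})\sqcup\sK\to\sK$ identifying $i'$ with $i$ and fixing all other vertices, giving $\mathcal Z_{(\varnothing,\{i'\})\sqcup\sK}=S^1_i\times\zk$. By the functoriality statement, the induced map in cohomology is modelled by the dga homomorphism sending $v_j\mapsto v_j$ for $j\ne i$, $v_i\mapsto v_i+v_{i'}$, and likewise $u_j\mapsto u_j$, $u_i\mapsto u_i+u_{i'}$, where $\Lambda[u_{i'}]\otimes H^*(\zk)$ is the Koszul model for $H^*(S^1_i\times\zk)$. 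Comparing with the general form $\alpha\mapsto 1\otimes\alpha+u_{i'}\otimes\iota(\alpha)$ and extracting the coefficient of $u_{i'}$, I would identify the derivation with the one that differentiates the generator $u_i$ (sending it to $1$) and kills everything else — i.e.\ $\iota_i(u_j)=\delta_{ij}$, $\iota_i(v_j)=0$. One has to check this is indeed a derivation of the Koszul complex commuting with $d$ up to the expected sign, but that is the routine verification $\iota_i d + d\iota_i = 0$ on generators (both sides vanish since $d$ raises and $\iota_i$ lowers the $u$-degree in complementary ways on $v_i$), so it descends to cohomology.

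Second, for the diagonal action $S^1_d\times\zk\to\zk$: this is the composition of the diagonal embedding $S^1_d\hookrightarrow T^m$ with the torus action, so the associated derivation is the sum of the coordinate ones, $\iota_d=\sum_{i=1}^m\iota_i$. By the first part, on generators this gives $\iota_d(u_j)=\sum_i\delta_{ij}=1$ and $\iota_d(v_j)=0$, which is exactly the defining formula \eqref{d'uv} for $d'$. Since both $\iota_d$ and $d'$ are derivations agreeing on the algebra generators $u_j,v_j$ of $\Lambda[u_1,\ldots,u_m]\otimes\Z[\sK]$, they coincide as operators on the Koszul complex, and hence induce the same derivation on $H^*(\zk)$.

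The main obstacle I anticipate is bookkeeping the signs and making the ghost-vertex identification precise. Concretely, one must confirm that under the map of polydiscs $(z_1,\ldots,z_m,w)\mapsto(z_1,\ldots,z_{i-1},z_iw,z_{i+1},\ldots,z_m)$ realising $S^1_i\times\zk\to\zk$, the induced cohomology map really sends $v_i\mapsto v_i+v_{i'}$ and $u_i\mapsto u_i+u_{i'}$ with the correct sign convention for $u_{i'}\in H^1(S^1)$ — this is where the $\bideg u_i=(-1,2)$ grading and the derivation sign $(-1)^{|a|}$ in the Leibniz rule must be tracked carefully — and that the $(d,d')$-bicomplex sign relation $dd'=-d'd$ is compatible with $d\iota_i=-\iota_i d$. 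Once the $i=1$ case is pinned down, the rest is formal: additivity of the derivation under composition of the diagonal with the coordinate circles, and the observation that a derivation of $\Lambda[u_1,\ldots,u_m]\otimes\Z[\sK]$ is determined by its values on $u_1,\ldots,u_m,v_1,\ldots,v_m$.
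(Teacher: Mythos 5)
Your proposal is correct and essentially reproduces the paper's proof: model the $i$th coordinate circle action as the map of moment-angle complexes induced by the simplicial map collapsing the ghost vertex $i'$ to $i$, read off the derivation $\iota_i$ from the $u_{i'}$-coefficient of the induced map of Koszul complexes, and obtain the diagonal case by additivity over the coordinate circles. One cosmetic slip: since $i'$ is a ghost vertex, $v_{i'}=0$ in the face ring of $(\varnothing,\{i'\})\sqcup\sK$, so the image of $v_i$ under $\varphi^*$ is just $v_i$, not $v_i+v_{i'}$ --- consistent with your own target model $\Lambda[u_{i'}]\otimes H^*(\zk)$, which contains no class $v_{i'}$ --- but this has no effect on the computed derivation $\iota_i(v_j)=0$.
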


\begin{proof}
The $i^{th}$ coordinate circle action $S^1_i\times\zk\to\zk$ is the map of moment-angle complexes 
$\varphi_{\mathcal Z}\colon \mathcal Z_{(\varnothing,\{i'\})\sqcup\sK}\to\zk$ induced by the simplicial map 
$\varphi\colon(\varnothing,\{i'\})\sqcup\sK\to\sK$ sending the ghost vertex $i'$ to~$i$. The corresponding map of Koszul complexes is given by
\[
   \Lambda[u_1,\ldots,u_m]\otimes\Z[\sK]\to
   \Lambda[u'_i]\otimes\Lambda[u_1,\ldots,u_m]\otimes\Z[\sK],
\]
where $u_i\mapsto u'_i+u_i=1\otimes u_i+u'_i\otimes 1$, $u_j\mapsto 1\otimes u_j$ for $j\ne i$ and $v_j\mapsto 1\otimes v_j$ for any~$j$. Here $u'_i$ represents the generator of $H^1(S^1_i)$. This proves the first assertion. 

The second assertion follows from the fact that the derivation corresponding to the diagonal circle action is the sum of the derivations corresponding to the coordinate circle actions.
\end{proof}

\begin{remark}
The derivations $\iota_i$ were introduced and studied in the work of Amelotte and Briggs~\cite{am-br} under the name \emph{primary cohomology operations for $\zk$}. We expect that the higher cohomology operations from~\cite{am-br} are related to the differentials in the spectral sequence of the bicomplex 
$(\Lambda[u_1,\ldots,u_m]\otimes\Z[\sK],d,d')$ from the previous section. 
\end{remark}

\section{Techniques for computing $\HH^\ast(\zk)$}\label{sec_techniques}
In this section, we consider methods for the practical computation of $\HH^\ast(\zk)$. We show that $\HH^*(\zk)$ has rank one if and only if $\sK$ is a full simplex, and identify several classes of simplicial complexes $\sK$ for which $\HH^*(\zk)$ has rank two. We also show that the rank of $\HH^\ast(\zk)$ can be arbitrarily large.

\begin{proposition}\label{thm_main} 
For a simplicial complex $\sK$ on the vertex set $[m]$, the following conditions are equivalent:
\begin{itemize}
\item[(a)] all full subcomplexes of $\sK$ are acyclic;
\item[(b)] $\sK=\varDelta^{m-1}$ and $\zk=(D^2)^m$;
\item[(c)] $\zk$ is acyclic;
\item[(d)] $\HH^*(\zk) = \HH^{0,0}(\zk)= \Z$.
\end{itemize}
\end{proposition}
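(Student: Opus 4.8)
The plan is to prove the cycle of implications (a) $\Rightarrow$ (b) $\Rightarrow$ (c) $\Rightarrow$ (d) $\Rightarrow$ (a), using the Hochster-type decompositions and the results established earlier in the paper. The easiest part is (b) $\Rightarrow$ (c): if $\sK=\varDelta^{m-1}$ then $\zk=(D^2)^m$ is contractible, hence acyclic. For (c) $\Rightarrow$ (d), note that if $\zk$ is acyclic then $H^*(\zk)=\Z$ concentrated in bidegree $(0,0)$ (coming from $\widetilde H^{-1}(\sK_\varnothing)=\Z$), so the cochain complex $\CH^*(\zk)=(H^*(\zk),d')$ is just $\Z$ in a single bidegree with zero differential, and therefore $\HH^*(\zk)=\HH^{0,0}(\zk)=\Z$.

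For (a) $\Rightarrow$ (b), I would argue by contraposition or directly: if $\sK\ne\varDelta^{m-1}$, there is a missing face, and in particular there is a minimal non-face $I\notin\sK$; then $\sK_I=\partial\varDelta^{|I|-1}$, which has $\widetilde H^{|I|-2}(\sK_I)=\Z\ne 0$, contradicting (a). (If $|I|=1$, i.e. there is a ghost vertex, then $\sK_I=\varnothing$ has $\widetilde H^{-1}\ne 0$; but the standing convention rules out ghost vertices, and in any case $\widetilde H^{-1}(\varnothing)\ne 0$ still contradicts acyclicity appropriately — one should phrase this carefully. Under the paper's convention $\sK$ has vertex set $[m]$, so a minimal non-face has at least two elements.) Conversely (b) $\Rightarrow$ (a) is immediate since every full subcomplex of a simplex is a simplex, hence contractible.

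The remaining, and most substantial, implication is (d) $\Rightarrow$ (a), or equivalently (a) $\Rightarrow$ (d) together with its converse. One direction, (b) $\Rightarrow$ (d), is already covered via (b) $\Rightarrow$ (c) $\Rightarrow$ (d). For the converse (d) $\Rightarrow$ (b): suppose $\HH^*(\zk)=\Z$ concentrated in bidegree $(0,0)$. I would invoke Corollary~\ref{cor_euler_char_of_HH_for_simplex}: if $\sK\ne\varDelta^{m-1}$ then the Euler characteristic of $\HH^*(\zk)$ is zero. But $\rank\HH^{0,0}(\zk)\ge 1$ always, since the class of $1\in\widetilde H^{-1}(\sK_\varnothing)$ is a $d'$-cocycle that is not a $d'$-coboundary (it sits in the lowest filtration and there is nothing mapping into bidegree $(0,0)$ under $d'$, which raises the homological degree $-k$). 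Hence if $\HH^*(\zk)$ had rank exactly $1$, its Euler characteristic would be $\pm 1\ne 0$, forcing $\sK=\varDelta^{m-1}$, which is (b).

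The main obstacle is being careful about the bookkeeping of when the unit class $1\in\widetilde H^{-1}(\sK_\varnothing)$ genuinely survives to $\HH^*(\zk)$ — this requires checking that $d'$ vanishes on it (true, since $d'$ lowers $|I|$ and $I=\varnothing$ is minimal) and that it is not hit by $d'$ from $\widetilde H^{-1}$ of any larger $\sK_I$ — but by degree reasons $\widetilde H^{-1}(\sK_I)=0$ for $I\ne\varnothing$ (as $\sK_I\ne\varnothing$ when $\sK$ has no ghost vertices), so nothing maps onto it. Thus $\HH^{0,0}(\zk)=\Z$ always, and the whole argument reduces to combining this with the vanishing Euler characteristic. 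An alternative and perhaps cleaner route to (d) $\Rightarrow$ (a) bypasses Euler characteristics entirely: use Theorem~\ref{1db} and Proposition~\ref{2db}(b), which say that if $\sK\ne\varDelta^{m-1}$ then the total cohomology of the finite bicomplex $(R^*(\sK),d,d')$ vanishes; running the spectral sequence whose $E_2$-page is $\HH^*(\zk)$ and which converges to $0$, a rank-one $E_2$ concentrated in a single bidegree cannot support any differential and hence would survive to $E_\infty$, contradicting convergence to zero — so again $\sK=\varDelta^{m-1}$.
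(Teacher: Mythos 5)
Your proof is correct and takes essentially the same approach as the paper: the minimal non-face argument for (a) implies (b), and Corollary~\ref{cor_euler_char_of_HH_for_simplex} for the key implication (d) implies (b). The "alternative" route you sketch via Theorem~\ref{1db}, Proposition~\ref{2db}(b), and the spectral sequence is in fact how Corollary~\ref{cor_euler_char_of_HH_for_simplex} itself is derived, so it is the same argument unfolded rather than a different one.
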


\begin{proof}
(a)$\Rightarrow$(b) A minimal non-face of $\sK$ is a full subcomplex that is not acyclic, as it is the boundary of a simplex. Hence, if all full subcomplexes are acyclic, then every subset of $[m]$ is a face of~$\sK$, so $\sK=\varDelta^{m-1}$.

\smallskip

(b)$\Rightarrow$(c) Obvious.

\smallskip

(c)$\Rightarrow$(a) This follows from Theorem~\ref{zkcoh}.

\smallskip

(b)$\Rightarrow$(d) Obvious.

\smallskip

(d)$\Rightarrow$(b) This follows from Corollary~\ref{cor_euler_char_of_HH_for_simplex}.
\end{proof}


Next is the basic example when the double cohomology has rank~$2$.

\begin{proposition}\label{prop_boundary_simplex}
Let $\sK=\partial\varDelta^{m-1}$, the boundary of an $(m-1)$-simplex. Then, 
\[
  \HH^{-k, 2\ell}(\zk)= \begin{cases} 
    \ZZ & \text{for } (-k, 2\ell)= (0,0),\, (-1, 2m);\\
    0 & \text{otherwise}. 
  \end{cases}
\]
\end{proposition}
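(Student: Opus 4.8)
The plan is to compute $H^*(\mathcal Z_{\partial\varDelta^{m-1}})$ via the Hochster decomposition \eqref{eq_Hochster_decomp} and then run the differential $d'$. For $\sK=\partial\varDelta^{m-1}$ the full subcomplex $\sK_I$ is a full simplex $\varDelta^{|I|-1}$ whenever $I\subsetneq[m]$ (hence acyclic), while $\sK_{[m]}=\partial\varDelta^{m-1}\simeq S^{m-2}$. So the only nonzero reduced cohomology groups are $\widetilde H^{-1}(\sK_\varnothing)=\Z$ and $\widetilde H^{m-2}(\sK_{[m]})=\Z$, contributing $H^{0,0}(\zk)=\Z$ (bidegree from $|I|=0$) and $H^{-1,2m}(\zk)=\Z$ (here $|I|=m$, $\ell-k-1=m-2$ forces $\ell=m$, $k=1$). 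Thus $H^*(\zk)$ is already concentrated in exactly two bidegrees.

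Next I would observe that the differential $d'\colon H^{-k,2\ell}(\zk)\to H^{-k+1,2\ell-2}(\zk)$ must vanish for trivial degree reasons: the two nonzero groups sit in bidegrees $(0,0)$ and $(-1,2m)$, and $d'$ applied to either lands in a bidegree where $H^*(\zk)=0$ (for $m\ge 2$, $(-1,2m)\mapsto(0,2m-2)$ which is zero since $2m-2\ne 0$, and $(0,0)\mapsto(1,-2)$ which is zero). Hence $d'\equiv 0$ and $\HH^*(\zk)=H^*(\zk)$, giving exactly the claimed answer. Alternatively one can invoke Proposition~\ref{thm_main}: since $\sK=\partial\varDelta^{m-1}\ne\varDelta^{m-1}$, the double cohomology has even positive rank by Corollary~\ref{cor_euler_char_of_HH_for_simplex}, and here the rank is visibly at most $2$, so it equals $2$; but the direct degree argument already pins down the bidegrees.

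There is essentially no obstacle here: the content is entirely in the standard computation of $H^*(\mathcal Z_{\partial\varDelta^{m-1}})$, which is classical (it is the moment-angle manifold $S^{2m-1}$, with $H^*(S^{2m-1})=\Z$ in degrees $0$ and $2m-1$, matching $-k+2\ell=0$ and $-1+2m$). If anything, the only point to be careful about is the bidegree bookkeeping in \eqref{eq_Hochster_decomp}, making sure the $S^{m-2}$ in $\sK_{[m]}$ lands in bidegree $(-1,2m)$ rather than elsewhere; once that is checked, the vanishing of $d'$ is automatic and the proposition follows immediately.
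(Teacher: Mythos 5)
Your argument is correct and essentially the same as the paper's: both compute $H^*(\mathcal Z_{\partial\varDelta^{m-1}})$ (finding it concentrated in bidegrees $(0,0)$ and $(-1,2m)$, rank $1$ in each) and conclude that $d'$ must vanish so that $\HH^*=H^*$. The only cosmetic difference is that you work through the Hochster decomposition~\eqref{eq_Hochster_decomp} directly, while the paper reads off generators $[1]$ and $[u_1v_2\cdots v_m]$ from $H(R^*(\sK),d)$; these are equivalent descriptions by Theorem~\ref{1db}. Your explicit degree-reasons justification for $d'=0$ (targets in bidegrees $(1,-2)$ and $(0,2m-2)$ are zero for $m\ge 2$) is the right reasoning and is slightly more detailed than the paper, which leaves this step implicit.
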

\begin{proof}
Using the isomorphism $H^*(\zk)\cong H(R^*(\sK),d)$ (see Section \ref{sec_prel}), the only nontrivial cohomology groups are
\begin{align*}
  &H^0(\zk)=H^{0,0}(\zk)\cong\Z\langle[1]\rangle;\\
  &H^{2m-1}(\zk)=H^{-1,2m}(\zk)\cong\Z\langle[u_1v_2v_3\cdots v_m]\rangle,
\end{align*}
where $u_1v_2v_3\cdots v_m$ is a $d$-cocycle. Hence, the induced differential $d'$ on $H^*(\zk)\cong H(R^*(\sK),d)$ is trivial, so $\HH^*(\zk)$ is the same as $H^*(\zk)$.
\end{proof}

Recall the cochain complex $\CH^\ast(\zk)$ giving $\HH^{\ast}(\zk)$, see~\eqref{chcoho}. Let $\mathcal{K}\ast \mathcal{L}$ denote the simplicial join. Observe that $\mathcal{Z}_{\mathcal{K}\ast \mathcal{L}}\cong\zk\times\mathcal Z_{\mathcal L}.$ 
Then next result shows that the double cohomology can be arbitrarily large.

\begin{theorem}\label{thm_join}
For two simplicial complexes $\mathcal{K}$ and $\mathcal{L}$, if either  $H^*(\mathcal{Z}_{\mathcal{K}})$ or $H^\ast(\mathcal{Z}_{\mathcal{L}})$ is projective over the coefficients then there is an isomorphism of chain complexes 
\begin{equation}\label{eq_join_chain_complex}
\CH^\ast(\mathcal{Z}_{\mathcal{K}\ast \mathcal{L}}) \cong \CH^\ast(\mathcal{Z}_{\mathcal{K}}) \otimes \CH^\ast(\mathcal{Z}_{\mathcal{L}}).
\end{equation}
In particular, we have $\HH^\ast(\mathcal{Z}_{\mathcal{K}\ast \mathcal{L}}) \cong \HH^\ast(\mathcal{Z}_{\mathcal{K}}) \otimes \HH^\ast(\mathcal{Z}_{\mathcal{L}})$ with field coefficients. The same statement holds for the double homology $\HH_*$. 
\end{theorem}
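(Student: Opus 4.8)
The plan is to build the isomorphism~\eqref{eq_join_chain_complex} directly from the Hochster decomposition, using the combinatorics of full subcomplexes of a join. The key combinatorial fact is that for $I \subset [m] \sqcup [n]$, writing $I = I_{\mathcal K} \sqcup I_{\mathcal L}$ with $I_{\mathcal K} \subset [m]$, $I_{\mathcal L} \subset [n]$, we have $(\mathcal K \ast \mathcal L)_I = \mathcal K_{I_{\mathcal K}} \ast \mathcal L_{I_{\mathcal L}}$. Since $\varnothing$ is the join unit, the reduced simplicial chain complex $\widetilde C_*(\mathcal K_{I_{\mathcal K}} \ast \mathcal L_{I_{\mathcal L}})$ is (up to a degree shift) the tensor product $\widetilde C_*(\mathcal K_{I_{\mathcal K}}) \otimes \widetilde C_*(\mathcal L_{I_{\mathcal L}})$; this is the standard fact that $\widetilde H_*(X \ast Y) \cong \widetilde H_{*-1}(X) \otimes \widetilde H_{*-1}(Y)$ read off at the chain level. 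Under the projectivity hypothesis, the Künneth theorem gives $\widetilde H^*(\mathcal K_{I_{\mathcal K}} \ast \mathcal L_{I_{\mathcal L}}) \cong \bigoplus_{p+q+1 = *} \widetilde H^p(\mathcal K_{I_{\mathcal K}}) \otimes \widetilde H^q(\mathcal L_{I_{\mathcal L}})$ with no Tor correction. Summing over all $I$ and matching bidegrees (recall the grading on $\mathcal Z_{\mathcal K}$ already absorbs the $|I|$ shift) yields an isomorphism of bigraded groups $H^*(\mathcal Z_{\mathcal K \ast \mathcal L}) \cong H^*(\mathcal Z_{\mathcal K}) \otimes H^*(\mathcal Z_{\mathcal L})$, which is the known statement that $H^*(\mathcal Z_{\mathcal K \ast \mathcal L}) = H^*(\mathcal Z_{\mathcal K} \times \mathcal Z_{\mathcal L})$.

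Next I would upgrade this to a chain-map statement for $d'$. The cleanest route is the Koszul/bicomplex model: by Theorem~\ref{1db}, $\CH^*(\mathcal Z_{\mathcal K}) = H\bigl(\Lambda[u_1,\dots,u_m]\otimes\Z[\mathcal K], d\bigr)$ with differential $d'$, and there is an evident isomorphism of bicomplexes
\[
\bigl(\Lambda[u_1,\dots,u_{m+n}]\otimes\Z[\mathcal K \ast \mathcal L], d, d'\bigr) \cong \bigl(\Lambda[u_1,\dots,u_m]\otimes\Z[\mathcal K], d, d'\bigr) \otimes \bigl(\Lambda[u'_1,\dots,u'_n]\otimes\Z[\mathcal L], d, d'\bigr),
\]
since $\Z[\mathcal K \ast \mathcal L] = \Z[\mathcal K]\otimes\Z[\mathcal L]$ and both $d$ and $d'$ are derivations defined generator-by-generator, hence compatible with tensor products of DGAs (with the Koszul sign). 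Taking $d$-cohomology of the right-hand side and applying the Künneth theorem under the projectivity hypothesis gives $H(\text{--},d) \cong H(\Lambda\otimes\Z[\mathcal K],d) \otimes H(\Lambda\otimes\Z[\mathcal L],d)$, and this isomorphism is compatible with the induced $d'$ because $d'$ is a morphism of complexes commuting with the Künneth map (it is induced by a map of the underlying tensor product of complexes). This identifies $\CH^*(\mathcal Z_{\mathcal K \ast \mathcal L})$ with $\CH^*(\mathcal Z_{\mathcal K}) \otimes \CH^*(\mathcal Z_{\mathcal L})$ as cochain complexes, and then a second application of Künneth gives the statement for $\HH^*$. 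The homology statement $\HH_*$ follows by the dual argument, using the chain-level Hochster decomposition and the homology Künneth theorem, or simply by running the same bicomplex argument with chains in place of cochains.

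I would carry this out in the following order: (i) record the join formula $(\mathcal K \ast \mathcal L)_I = \mathcal K_{I_{\mathcal K}} \ast \mathcal L_{I_{\mathcal L}}$ and the resulting identification $\Z[\mathcal K \ast \mathcal L] = \Z[\mathcal K] \otimes \Z[\mathcal L]$; (ii) exhibit the bicomplex isomorphism above, checking that both $d$ and $d'$ respect the tensor decomposition with the correct signs; (iii) pass to $d$-cohomology via Künneth under the projectivity hypothesis, noting that the absence of a Tor term is exactly what makes~\eqref{eq_join_chain_complex} an honest isomorphism of chain complexes rather than merely a filtered quasi-isomorphism; (iv) take $d'$-cohomology via a second Künneth to conclude; (v) remark that the homology case is identical. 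The main obstacle is bookkeeping the signs: one must verify that the sign conventions in the differentials $d'_p = (-1)^{p+1}\sum \varepsilon(i,I)\psi_{p;i,I}$ and $d'(u_J v_I) = \sum_{j\in J}\varepsilon(j,J)u_{J\setminus\{j\}}v_I$ are precisely the ones that make the tensor-product-of-DGAs differential $d' \otimes 1 \pm 1 \otimes d'$ agree with the $d'$ on the join; the $\varepsilon$-signs depend on the global linear order on $[m+n]$, so the chosen ordering (all of $[m]$ before all of $[n]$) is what makes the splitting sign-compatible. Once the sign conventions are pinned down this is routine; I would state it cleanly and relegate the sign check to a short computation.
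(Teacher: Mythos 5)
Your argument is correct, but it takes a genuinely different route from the paper's. The paper's proof stays entirely within the Hochster decomposition: it constructs the map
\[
\phi\colon\CH^\ast(\mathcal Z_{\mathcal K})\oplus_{\CH^\ast(\mathcal Z_{\varnothing})}\CH^\ast(\mathcal Z_{\mathcal L})\longrightarrow\CH^\ast(\mathcal Z_{\mathcal K\ast\mathcal L}),
\]
observes that restriction to full subcomplexes supported on $V(\mathcal K)$ (resp.~$V(\mathcal L)$) defines a cochain retraction (the key check being that $(\mathcal K\ast\mathcal L)_I\hookrightarrow(\mathcal K\ast\mathcal L)_{I\cup\{x\}}$ is a full-subcomplex inclusion for $x\notin\mathcal K$), and then identifies $\coker\phi$ with $\overline{\CH}{}^*(\mathcal Z_{\mathcal K})\otimes\overline{\CH}{}^*(\mathcal Z_{\mathcal L})$ by applying K\"unneth to $\widetilde H^*\bigl((\mathcal K\ast\mathcal L)_{I\cup J}\bigr)=\widetilde H^*(\mathcal K_I\ast\mathcal L_J)$. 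You instead pass through the bicomplex model of Theorem~\ref{1db}: the algebra isomorphism $\Z[\mathcal K\ast\mathcal L]\cong\Z[\mathcal K]\otimes\Z[\mathcal L]$ upgrades to an isomorphism of bicomplexes, $d$ and $d'$ are derivations defined on generators and hence compatible with the tensor decomposition, and a single K\"unneth applied to $d$-cohomology yields the cochain-complex isomorphism, after which a second K\"unneth handles $\HH^*$ over a field. Both proofs invoke the projectivity hypothesis at essentially the same step, to kill the Tor term in K\"unneth. The advantage of your route is that compatibility of the K\"unneth map with $d'$ is automatic from the Leibniz rule, whereas the paper must separately verify that its projections are cochain maps; the price is that you import Theorem~\ref{1db} and must pin down the Koszul sign conventions (your observation that the global ordering on $[m]\sqcup[n]$ with all of $[m]$ before all of $[n]$ makes $\varepsilon(j,J_1\cup J_2)$ factor correctly is exactly the sign check needed, and it does go through). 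The paper's argument is more self-contained within the Hochster framework and avoids signs entirely by staying simplicial. Both are valid proofs of the same statement.
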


\begin{proof}
The map 
\[
\phi\colon\CH^\ast(\zk) \oplus_{\CH^\ast(\mc{Z}_{\varnothing})}\CH^\ast(\mc{Z}_\mc{L})\rightarrow  \CH^\ast(\mc{Z}_{\sk\ast \mc{L}})
\]
induced by the Hochster decomposition \eqref{eq_Hochster_decomp} has a retraction given by projections onto the factors. The projections are cochain maps because, for every $x\not\in \sK$ and a subset $I$ of the set $V(\sk)$ of vertices in $\sK$, the map
\[
(\sK\ast \mc{L})_I\rightarrow (\mc{K}\ast \mc{L})_{I\cup \{ x\}}
\]
is the inclusion of a full subcomplex and similarly for $\mc{L}$. So we get that 
\begin{equation}\label{eq1}
  \CH^\ast(\mc{Z}_{\mc{K}\ast\mc{L}})\cong \CH^\ast(\zk) \oplus_{\CH^\ast(\mc{Z}_{\varnothing})}\CH^\ast(\mc{Z}_\mc{L})
  \oplus {\mathop\mathrm{coker} } \phi,
\end{equation}

where 
\begin{equation}\label{eq_join_coker}
\coker \phi=\bigoplus_{\substack{\varnothing \neq I \subset V(\sk),\\ \varnothing \neq J \subset V(\mc{L})}} \widetilde{H}^\ast((\sk\ast \mc{L})_{I\cup J}) \cong 
\bigoplus_{\substack{\varnothing \neq I \subset V(\sk),\\ \varnothing \neq J \subset V(\mc{L})}} \widetilde{H}^\ast(\sk_I) \otimes \widetilde{H}^\ast(\mc{L}_J).
\end{equation}
Here, the second isomorphism in \eqref{eq_join_coker} follows from the K\"unneth theorem together with the hypothesis on the projectiveness of $H^*(\mathcal{Z}_{\mathcal{K}})$ or $H^\ast(\mathcal{Z}_{\mathcal{L}})$. Therefore we have that 
\begin{equation}\label{eq2}
  \mathop\mathrm{coker} \phi \cong \overline{\CH}{}^*(\zk) \otimes \overline{\CH}{}^* (\mc{Z}_\mc{L}).
\end{equation}
where $\overline{\CH}{}^*(\zk)$ is the cokernel of $\CH^*(\mc{Z}_{\varnothing})\to \CH^*(\zk)$ and similarly for $\mathcal{L}$. Then isomorphisms \eqref{eq1} and \eqref{eq2} imply \eqref{eq_join_chain_complex}. The proof for $\HH_*$ is similar.
\end{proof}

The isomorphism of Theorem~\ref{thm_join} can be also deduced from the algebraic description of Theorem~\ref{1db}. We illustrate this by an example.

\begin{example}\label{ex_4gon}
Let $\sk_1=\sk_2=\partial \Delta^1$, two disjoint points. Then, $\sk=\sk_1 \ast \sk_2$ is the boundary of a square, $\zk\cong \mathcal{Z}_{\mathcal{K}_1} \times \mathcal{Z}_{\mathcal{K}_2} \cong  S^3\times S^3$ and $\HH^\ast(\mathcal{Z}_{\sk_i})=H^\ast(\mathcal{Z}_{\sk_i})$ for $i=1,2$ by Proposition \ref{prop_boundary_simplex}. 
The nontrivial cohomology groups are
\begin{align*}
  H^0(\zk)&=H^{0,0}(\zk)\cong\Z\langle1\rangle;\\ H^3(\zk)&=H^{-1,4}(\zk)\cong
  \Z\langle[u_1v_3],[u_2v_4]\rangle;\\
  H^6(\zk)&=H^{-2,8}(\zk)\cong\Z\langle[u_1u_2v_3v_4]\rangle,
\end{align*}
where the vertex sets of $\sk_1$ and $\sk_2$ are $\{1,3\}$ and $\{2,4\}$, respectively. Note that the induced differential $d'$ is trivial, which yields  $\HH^\ast(\zk)=H^\ast(\zk)$ and 
$\HH^\ast(\zk) \cong \HH^\ast(\mathcal{Z}_{\sk_1}) \otimes \HH^\ast(\mathcal{Z}_{\sk_2})$.
\end{example}

In the previous examples $\HH^*(\zk)$ behaved like $H^*(\zk)$. Here is an example of a major difference.

\begin{theorem}\label{thm_K_homotopy_discrete}
Let $\sk=\sk'\sqcup\mathit{pt}$ be the disjoint union of a nonempty simplicial complex $\sk'$ and a point. 
Then, 
\[
  \HH^{-k, 2\ell}(\mc{Z}_\mc{K})=\begin{cases} 
    \ZZ & \text{for }(-k,2\ell)=(0,0),~(-1,4);\\
    0 & \text{otherwise}.
  \end{cases}
\]
\end{theorem}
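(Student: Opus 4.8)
The plan is to compute $\HH^*(\zk)$ directly from the Hochster decomposition, exploiting the fact that $\sk = \sk' \sqcup \mathit{pt}$ has a very restrictive structure of full subcomplexes. Say $\sk'$ has vertex set $[m-1]$ and the extra point is the vertex $m$. For $I \subset [m]$, the full subcomplex $\sk_I$ is either $\sk'_{I}$ (if $m \notin I$) or $\sk'_{I \setminus \{m\}} \sqcup \mathit{pt}$ (if $m \in I$). In the latter case, writing $I' = I \setminus \{m\}$, the reduced cohomology splits: if $I' \ne \varnothing$ then $\widetilde H^0(\sk_I) \cong \widetilde H^0(\sk'_{I'}) \oplus \Z$ (the disjoint point adds one to the number of connected components) and $\widetilde H^p(\sk_I) \cong \widetilde H^p(\sk'_{I'})$ for $p \ge 1$; if $I' = \varnothing$ then $\sk_I$ is a single point, which is acyclic. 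So the "new" contributions to $H^*(\zk)$ coming from subsets containing $m$ are a copy of $\widetilde H^p(\sk'_{I'})$ for each nonempty $I' \subset [m-1]$ together with a copy of $\Z$ in degree $0$ for each such $I'$.

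Next I would organize the cochain complex $\CH^*(\zk)$ accordingly. The differential $d'$ only decreases $|I|$ by one, removing a single vertex. I would split $\CH^*(\zk)$ using the value of the last coordinate. There is a subcomplex-and-quotient structure: restricting to $I$ with $m \notin I$ gives exactly $\CH^*(\mc{Z}_{\sk'})$, and the part with $m \in I$ is more delicate because $d'$ can remove vertex $m$ itself, landing in the $m \notin I$ part. The key point is that for each fixed $I' \subset [m-1]$ the pair of groups $\widetilde H^p(\sk'_{I'})$ (sitting in the $I'$-summand, $m\notin I$) and $\widetilde H^p(\sk'_{I'})$ (the part of $\widetilde H^p(\sk_{I'\cup\{m\}})$ that "matches" $\sk'_{I'}$, for $m\in I$) are identified by $\psi_{p;m,I'\cup\{m\}}$, which on this matching part is an isomorphism (it is the map induced by the inclusion $\sk'_{I'} \hookrightarrow \sk'_{I'}\sqcup\mathit{pt}$, which is an iso on $\widetilde H^{\ge 1}$ and on the "old" summand of $\widetilde H^0$). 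Removing $m$ from such an $I$ therefore contributes an isomorphism in the $d'$-differential, so these two copies cancel in $\HH^*$. What remains after this cancellation is precisely the complex built from the new $\Z$'s in degree $0$: for each nonempty $I' \subset [m-1]$ a copy of $\widetilde H^0$ contributed by the disjoint point, placed in bidegree $(-|I'|, 2(|I'|+1))$, with $d'$ acting by alternating-sum restrictions.

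I would then identify this remaining complex. Collecting the degree-$0$ classes "$1_{\mathit{pt}}$" over all nonempty $I' \subset [m-1]$ with vertex $m$ adjoined, with the restriction differential $d'$, gives a complex isomorphic (up to a degree shift accounting for the extra vertex $m$) to the simplicial cochain complex of the full simplex $\Delta^{m-2}$ on $[m-1]$ with the empty set removed — equivalently, to $\widetilde C^*$ of a point augmented appropriately. A cleaner way: this is exactly the $d'$-complex $\bigoplus_{\varnothing \ne I' \subset [m-1]} \widetilde H^{-1}((\mathit{pt})_{\varnothing})\otimes(\text{stuff})$; concretely it is the reduced cochain complex of a simplex, which is acyclic except for one surviving class. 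A bookkeeping check of bidegrees shows the one surviving class sits in bidegree $(-1, 4)$: it corresponds to $I' = \{i\}$ a single vertex of $\sk'$, giving $\widetilde H^0$ of two points in $\sk_{\{i,m\}} = S^0$, i.e.\ bidegree $(-|I|, 2|I|) = (-2,4)$ shifted — I'll need to track the sign conventions and the exact indexing from $H^{-k,2\ell}(\zk) \cong \bigoplus_{|I|=\ell}\widetilde H^{\ell-k-1}(\sk_I)$, so $\widetilde H^0$ with $|I|=2$ gives $(-k,2\ell)=(-1,4)$. Together with the always-present $\HH^{0,0}(\zk)=\Z$ from the empty subcomplex, this yields the claimed answer.

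The main obstacle I anticipate is making the cancellation argument rigorous: one must exhibit an honest direct-sum decomposition of the cochain complex $\CH^*(\zk)$ (not merely of the cohomology groups) into an acyclic-except-one-class piece and the small piece, checking that $d'$ respects it. The subtlety is that $d'$ mixes the $m\in I$ and $m\notin I$ parts, so the decomposition has to be chosen compatibly — essentially one picks, inside $\widetilde H^p(\sk_{I'\cup\{m\}})$, the canonical complement to the "new $\Z$" that maps isomorphically under restriction, and verifies that the span of these complements together with the $m\notin I$ summands forms a $d'$-subcomplex with acyclic (contractible) quotient structure. Alternatively, and perhaps more cleanly, one can run the same argument on the Koszul bicomplex side via Theorem~\ref{1db}, where $\sk = \sk' \sqcup \mathit{pt}$ means $v_m$ is a non-zero-divisor modulo the ideal of $\sk'$ and $\{i,m\}\notin\sk$ for all $i \le m-1$; then $R^*(\sk)$ has an explicit basis and the pairing $u_m \leftrightarrow v_m$ under $d'$ can be used to split off an acyclic subcomplex directly, which may sidestep the cohomology-level bookkeeping entirely. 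I would likely present the proof this second way, as it parallels the proof of Theorem~\ref{surgery} (of which this is essentially the case $\sigma = \varnothing$, $\Delta^n = \mathit{pt}$).
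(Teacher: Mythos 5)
Your proposal identifies the right phenomenon: adding a disjoint vertex to $\sk'$ changes the reduced cohomology of full subcomplexes only in degree $0$, producing a ``new $\Z$'' for each nonempty $I'\subset[m-1]$, and you correctly observe that these new $\Z$'s assemble into a truncated simplex chain complex whose cohomology is a single $\Z$ in bidegree $(-1,4)$. This is exactly the phenomenon the paper exploits. The difference is in how the cancellation of the remaining ``matching'' pieces is made rigorous. The paper introduces a weight filtration on $\CH^*(\zk)$ in which the added vertex (labelled $0$) has weight $0$ and vertex $j\in[m]$ has weight $j$; the associated spectral sequence has $E_0$-differential equal to exactly the ``remove vertex $0$'' component of $d'$, which is an isomorphism $\widetilde H^{\ge1}(\sk_J)\to\widetilde H^{\ge1}(\sk_{J\setminus\{0\}})$. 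So the $E_1$-page consists only of your ``new $\Z$'s'' (together with $\Z_\varnothing$), the $d_1$-differential is readily computed, and $E_2=E_\infty$ gives the answer. The spectral sequence does the cancellation page by page, with no need to exhibit an explicit direct-sum decomposition of $\CH^*(\zk)$.

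The gap you anticipate is real, and your two proposed fixes do not quite close it. First, there is no \emph{canonical} complement to $\ker\bigl(\widetilde H^0(\sk_{I'\cup\{m\}})\to\widetilde H^0(\sk'_{I'})\bigr)$: specifying one amounts to choosing a basepoint in each $\sk'_{I'}$, and for the chosen complements to be stable under all the restriction maps $\psi_{0;j,I'\cup\{m\}}$ (so that the complements together with the $m\notin I$ summands form a $d'$-subcomplex) the basepoint choices would have to be coherent across nested $I'$, which requires additional argument. Second, the Koszul-bicomplex alternative as sketched conflates the two differentials: $u_m$ pairs with $v_m$ under $d$ (the Koszul differential), not under $d'$, where $d'u_m=1$. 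The $d'$-acyclicity of $R^*(\sk)$ holds for every non-simplex $\sk$ by Proposition~\ref{2db}, so it carries no information specific to the disjoint-point situation; and splitting off a $d'$-acyclic sub-bicomplex of $R^*(\sk)$ does not automatically descend to a splitting of $\CH^*(\zk)=H(R^*(\sk),d)$, which is what one needs. The filtration spectral sequence is the device that turns your cancellation heuristic into a proof, and it is also what the paper then reuses in the proof of Theorem~\ref{surgery}.
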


\begin{proof}
Assume $\sk'$ to be a simplicial complex on $[m]$, and let $\overline{[m]}\colonequals [m]\sqcup \{0\}$ be the vertex set of~$\sk$, with $0$ being the added disjoint vertex. Consider the cochain complex $\CH^\ast(\zk)=(H^{\ast}(\zk), d')$  giving $\HH^{\ast}(\zk)$.
We define a weight
\[
  w(J)\colonequals \sum_{j\in J}j
\]
for each subset $J\subset \overline{[m]}$ and consider the following increasing  filtration 
\begin{equation}\label{eq_filtration}
  \varnothing=F_{-1} \subset F_0 \subset \cdots \subset F_{p-1} \subset F_p \subset F_{p+1} \subset \cdots  \subset F_{\frac{m(m+1)}{2}}=H^\ast(\zk)
\end{equation}
of $H^{\ast}(\zk) =  \bigoplus_{J\subset \overline{[m]}} \widetilde{H}^\ast(\mathcal{K}_J)$, where 
\[
  F_p H^n(\zk) \colonequals   \bigoplus_{J\subset \overline{[m]} \colon  w(J)\leq p}
  \widetilde{H}^{n-|J|-1}(\mathcal{K}_J).
\]
The associated spectral sequence converges to $\HH^*(\zk)$ and has the $E_0$-page 
\[
  E_0^{p,q}=\left( F_pH^{p+q}(\zk)/F_{p-1}H^{p+q}(\zk)\right)=
  \bigoplus_{J\subset \overline{[m]} \colon  w(J)= p}
  \widetilde{H}^{p+q-|J|-1}(\mathcal{K}_J)
\]
with the differential $d_0\colon E_0^{p,q}\to E_0^{p,q-1}$ given by
\begin{equation}\label{d0dif}
  \bigoplus_{J\subset \overline{[m]} \colon  w(J)= p}
  \Bigl(\widetilde{H}^{p+q-|J|-1}(\mathcal{K}_J)\xrightarrow {d'}
  \widetilde{H}^{p+q-|J|-1}(\mathcal{K}_{J\setminus\{0\}})\Bigr).
\end{equation}

We now calculate the $E_1$-page. Note that the differential $d'$ in~\eqref{d0dif} is an isomorphism unless $0\in J$. For $w(J)=p=0$, we have  
\[
  E_1^{0,\ast}
  =\coker \bigl( \widetilde{H}^{-1}(\sk_{\{0\}}) \to \widetilde{H}^{-1}(\sk_\varnothing)\bigr) 
  \cong \ZZ =E_1^{0,0}.
\]
Now consider the case $w(J)=p\ge1$. Since $0$ is an isolated vertex in $\sk$, the differential $d'$ in~\eqref{d0dif} is an isomorphism unless $p+q-|J|-1=0$. Therefore,
\[
  E_1^{p,q}=\bigoplus_{\substack{J\subset\overline{[m]}\\w(J)=p}}
  \ker \Bigl(\widetilde{H}^{p+q-|J|-1}(\mathcal{K}_J)\xrightarrow {d'}
  \widetilde{H}^{p+q-|J|-1}(\mathcal{K}_{J\setminus\{0\}})\Bigr)
  =\bigoplus_{\substack{0\in J \subset \overline{[m]} \\|J|=p+q-1 \\ w(J)=p}}\ZZ_J, 
\]
where $\ZZ_J$ denotes the summand $\Z$ corresponding to a subset~$J$.
We rewrite the above as 
\[
  E_1^{p, \ast} = \bigoplus_{\substack{0\in J \subset \overline{[m]} \\ w(J)=p \\ 1\in J }}\ZZ_J \oplus      
  \bigoplus_{\substack{0\in J \subset \overline{[m]} \\ w(J)=p \\ 1\notin J}}\ZZ_J, \qquad p\ge1. 
\]
For $p\geq 2$, the differential $d_1 \colon E_1^{p,*} \to E_1^{p-1,*}$ maps $ \bigoplus_{\substack{0\in J \subset \overline{[m]} \\ w(J)=p \\ 1\in J }}\ZZ_J$ isomorphically to  $\bigoplus_{\substack{0\in J \subset \overline{[m]} \\ w(J)=p-1 \\ 1\notin J }}\ZZ_J$ and  maps $\bigoplus_{\substack{0\in J \subset \overline{[m]} \\ w(J)=p \\ 1\notin J}}\ZZ_J$ to $0$. Also, $d_1$ maps $E_1^{0,0}=\ZZ_\varnothing$ and $E_1^{1,2}=\ZZ_{\{0,1\}}$ to zero. 
See Figure \ref{fig_E_1_page}.
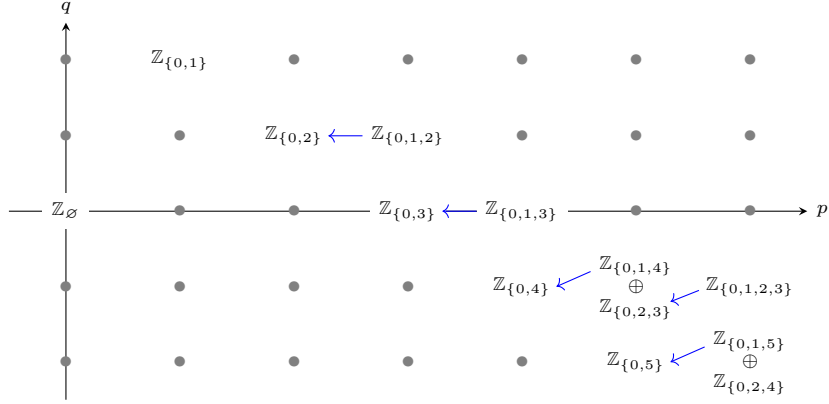
\begin{figure}
\begin{tikzpicture}[xscale=1.5]

\draw[-stealth] (0,-2.5)--(0,2.5); 
\node[above] at (0,2.5) {\scriptsize$q$};

\draw[-stealth] (-0.5,0)--(6.5,0);
\node[right] at (6.5,0) {\scriptsize$p$};

\foreach \x in {0,...,6}
	{\foreach \y in {-2, -1, 0, 1, 2}{
	\node at (\x, \y) {\textcolor{gray}{$\bullet$}};
	}}
\node[fill=white] at (0,0) {\scriptsize$\ZZ_\varnothing$};
\node[fill=white] at (1,2) {\scriptsize$\ZZ_{\{0,1\}}$};
\node[fill=white] at (2,1) {\scriptsize$\ZZ_{\{0,2\}}$};

\node[fill=white] at (3,1) {\scriptsize$\ZZ_{\{0,1,2\}}$};
\node[fill=white] at (3,0) {\scriptsize$\ZZ_{\{0,3\}}$};
\draw[blue, ->] (2.6,1)--(2.3,1);

\node[fill=white] at (4,0) {\scriptsize$\ZZ_{\{0,1,3\}}$};
\node[fill=white] at (4,-1) {\scriptsize$\ZZ_{\{0,4\}}$};
\draw[blue, ->] (3.6,0)--(3.3,0);

\node[fill=white] at (5,-1) {\scriptsize$\begin{array}{c} \ZZ_{\{0,1,4\}} \\ \oplus \\ \ZZ_{\{0,2,3\}} \end{array} $};
\node[fill=white] at (5,-2) {\scriptsize$\ZZ_{\{0,5\}}$};
\draw[blue, ->] (4.6,-0.8)--(4.3,-1);

\node[fill=white] at (6,-1) {\scriptsize$\ZZ_{\{0,1,2,3\}}$};
\node[fill=white] at (6,-2) {\scriptsize$\begin{array}{c} \ZZ_{\{0,1,5\}} \\ \oplus \\ \ZZ_{\{0,2,4\}} \end{array} $};
\draw[blue, ->] (5.55,-1.05)--(5.3,-1.2);
\draw[blue, ->] (5.6,-1.8)--(5.3,-2);
\end{tikzpicture}
\caption{The $E_1$-page and the differentials.}
\label{fig_E_1_page}
\end{figure}
Therefore, the spectral sequence collapses at $E_2$-page, and we have
\[
  E_2^{p,q}=E_{\infty}^{p,q}=
  \begin{cases} 
    \ZZ & (p,q)=(0,0) \text{ or } (1,2),\\
    0 & \text{otherwise},
  \end{cases}
\]
which completes the proof.  
\end{proof}

\begin{example}\label{ex_disjoint_pts}
Let $\sK$ be $m$ disjoint points. The nontrivial cohomology groups of $\zk$ are
\begin{align*}
  &H^0(\zk)=H^{0,0}(\zk)\cong\Z,\\
  &H^{q+1}(\zk)=H^{-q+1,2q}(\zk)
  \cong\Z^{(q-1)\binom mq},\quad 2\le q\le m,
\end{align*}
see~\cite[Example~4.7.6]{bu-pa15}. In fact, $\zk$ is homotopy equivalent to a wedge of spheres. The group $H^{-q+1,2q}(\zk)$ is generated by the cohomology classes $[u_Jv_i]$ with $|J|=q-1$ and $i\notin J$ subject to the relations arising from $du_L$ with $|L|=q$. The double cohomology groups 
$\HH^*(\zk)$ are therefore the cohomology groups of the cochain complex
\[
  0\longrightarrow H^{-m+1,2m}(\zk)\stackrel{d'}{\longrightarrow}
  \cdots\stackrel{d'}{\longrightarrow}
  H^{-2,6}(\zk)\stackrel{d'}{\longrightarrow} H^{-1,4}(\zk)\longrightarrow0
\]
together with $\HH^{0,0}(\zk)\cong\Z$. The differential $d'\colon H^{-2,6}(\zk)\to H^{-1,4}(\zk)$ is given by 
\[d'[u_iu_jv_k]=[u_jv_k]-[u_iv_k] \]
and therefore has cokernel of rank~1. Hence, $\HH^3(\zk)=\HH^{-1,4}(\zk)\cong\Z$.
By Theorem~\ref{thm_K_homotopy_discrete}, $\HH^n(\zk)=0$ for $n\ge4$.
Looking at the Betti numbers of this computation gives the binomial identity 
\[\sum_{q=2}^{m} (-1)^q(q-1)\binom mq=1.\]
\end{example}

Here is another example of a family of simplicial complexes with $\HH^*(\zk)$ of rank two.

\begin{theorem}\label{surgery}
Let $\sk=\sk' \cup_\sigma  \Delta^n$ be a simplicial complex obtained from a nonempty simplicial complex $\sk'$ by gluing an $n$-simplex along a proper, possibly empty, face $\sigma \in \sk'$. Then either $\sk$ is a simplex, or 
\[
  \HH^{-k, 2\ell}(\zk)=\begin{cases} 
    \ZZ & \text{for }(-k,2\ell)=(0,0),~(-1,4);\\
    0 & \text{otherwise}.
  \end{cases}
\]
\end{theorem}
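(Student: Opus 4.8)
The plan is to reduce the statement to the already-established Theorem~\ref{thm_K_homotopy_discrete} by a suitable change of vertex set, using the functoriality of the double cohomology together with a filtration-and-spectral-sequence argument analogous to the one in the proof of Theorem~\ref{thm_K_homotopy_discrete}. Write $\sK=\sK'\cup_\sigma\Delta^n$, where $\Delta^n$ is the simplex on a vertex set $V$, $\sigma\subsetneq V$ is the gluing face, and the "new" vertices (those in $V\setminus\sigma$) play the role that the single isolated vertex plays in Theorem~\ref{thm_K_homotopy_discrete}. The key combinatorial observation is this: for a subset $I\subset[m]$, the full subcomplex $\sK_I$ is obtained from $\sK'_{I\cap V(\sK')}$ by gluing the simplex on $I\cap V$ along the face $I\cap\sigma$ — and if $I$ contains at least one vertex of $V\setminus\sigma$ together with at least one vertex of $\sigma$ that is genuinely needed, the attached simplex is a cone, so $\widetilde H^*(\sK_I)$ either vanishes or is controlled entirely by the "old" part $\sK'_{I\setminus V}$. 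This is exactly the mechanism that made the differential $d'$ in~\eqref{d0dif} an isomorphism outside one diagonal in the earlier proof.

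First I would set up the filtration. Let $W=V\setminus\sigma=\{$new vertices$\}$; after relabelling we may assume these are the largest-numbered vertices, and assign to each $J\subset[m]$ the weight $w(J)=\sum_{j\in J}j$ as in Theorem~\ref{thm_K_homotopy_discrete}. This gives an increasing filtration of $\CH^*(\zk)=\bigl(\bigoplus_{J}\widetilde H^*(\sK_J),d'\bigr)$ by $F_pH^*(\zk)=\bigoplus_{w(J)\le p}\widetilde H^{*-|J|-1}(\sK_J)$, and the associated spectral sequence converges to $\HH^*(\zk)$, with $E_0^{p,q}=\bigoplus_{w(J)=p}\widetilde H^{p+q-|J|-1}(\sK_J)$ and $d_0$ induced by the component $\widetilde H^*(\sK_J)\to\widetilde H^*(\sK_{J\setminus\{j_0\}})$ for the unique way to drop weight to the next level — more precisely, $d_0$ should be set up to record exactly the deletion of a new vertex, so that $d_0$ is an isomorphism on the part of $E_0$ where $\sK_J$ has a new vertex whose removal does not change reduced cohomology. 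The point is that deleting a new vertex $v\in W\cap J$ from $\sK_J$ removes a vertex of the cone $\Delta^n_{J\cap V}$; as long as $J$ still meets $\sigma$ (or still meets $W\setminus\{v\}$, keeping the simplex nonempty), the inclusion $\sK_{J\setminus\{v\}}\hookrightarrow\sK_J$ is a homotopy equivalence because the attached simplex is contractible and glued along a contractible face. The surviving $E_1$-terms are thus indexed by subsets $J$ where this fails: either $J$ contains no new vertex at all (contributing a copy of $\CH^*(\mathcal Z_{\sK'})$, since $\sK_J=\sK'_J$ there), or $J$ contains a new vertex in an "extremal" way forcing a genuine kernel/cokernel, analogously to the subsets $\{0\}\cup(\text{something})$ in Figure~\ref{fig_E_1_page}.

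Next I would compute $E_1$ and the higher differentials. The no-new-vertex part reassembles into $\HH^*(\mathcal Z_{\sK'})$ by the functoriality maps of Proposition~\ref{functorialH} — but here is where one must be careful: a priori $\HH^*(\mathcal Z_{\sK'})$ need not have rank two, so the reduction cannot simply be "to $\sK'$". Rather, the correct statement is that the new-vertex contributions cancel the whole of $\CH^*(\mathcal Z_{\sK'})$ except for the classes forced to survive in bidegrees $(0,0)$ and $(-1,4)$, by a telescoping identical to the $d_1$-cancellation in the proof of Theorem~\ref{thm_K_homotopy_discrete}: the class $\ZZ_\varnothing$ survives (giving $\HH^{0,0}=\Z$), and exactly one further class survives in the position corresponding to the minimal new vertex together with the minimal vertex of $\sigma$ (or, if $\sigma=\varnothing$, this degenerates precisely to the case of Theorem~\ref{thm_K_homotopy_discrete}), landing in bidegree $(-1,4)$. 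If instead every vertex is a new vertex, i.e.\ $\sK'\subset\sigma$ and $\sK=\Delta^n$, we are in the excluded case "$\sK$ is a simplex". One then checks the spectral sequence collapses at $E_2$ for dimension reasons exactly as before.

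The main obstacle, I expect, is the bookkeeping in the $E_1$-page when $\sigma\ne\varnothing$: with a nonempty gluing face the attached simplex is glued along a nontrivial subcomplex, so the statement "deleting a new vertex is a homotopy equivalence" requires knowing that $\sK_J$ deformation-retracts onto $\sK_{J\setminus\{v\}}$, which uses that $\operatorname{st}_{\sK_J}(v)=\Delta^n_{J\cap V}$ is a cone with apex $v$ and that $\sK_{J\setminus\{v\}}$ contains the link $\operatorname{lk}_{\sK_J}(v)$ — a clean statement but one that must be verified for \emph{every} relevant $J$, and the "extremal" surviving subsets must be enumerated precisely so that the single nontrivial class lands in bidegree $(-1,4)$ and nowhere else. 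Getting the weight function and the indexing conventions aligned so that $d_0$ picks out new-vertex deletions (and not old-vertex deletions) is the technical heart; once that is arranged the rest parallels the proof of Theorem~\ref{thm_K_homotopy_discrete} almost verbatim, and the degree $(-1,4)$ drops out because the surviving class is represented by $u_v v_s$ for $v$ the minimal new vertex and $s$ the minimal vertex of $\sigma$ (respectively by $u_{v}v_{v'}$ for two new vertices when $\sigma=\varnothing$), which has bidegree $(-1,4)$.
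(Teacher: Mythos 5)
Your general strategy — filter by a weight function so that the $E_0$ differential records deletion of a new vertex, and use the fact that removing a new vertex from $\sK_J$ is a homotopy equivalence whenever the new vertex sits in the cone attached along $\sigma$ — is the right mechanism and is indeed how the paper's proof works. But there are two gaps that prevent the argument as sketched from closing.

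First, you filter \emph{all} of $\CH^*(\zk)$ and then hope the surviving $E_1$-terms (the whole of $\CH^*(\mathcal Z_{\sK'})$ coming from subsets with no new vertex, plus the genuine kernels coming from subsets that contain a new vertex but miss $\sigma$) cancel by a ``telescoping identical to the $d_1$-cancellation in Theorem~\ref{thm_K_homotopy_discrete}.'' This is where the argument stalls. The surviving terms do not sit in adjacent filtration degrees in general, so the cancellation does \emph{not} happen at $d_1$, and the spectral sequence does not collapse at $E_2$ as you assert. (A concrete test: $\sK'=\partial\varDelta^2$ on $\{1,2,3\}$, $\sigma=\{1\}$, new vertex $4$, weight $\sum_{j\in J} j$ with the new vertex given weight $0$; the surviving $E_1$-classes live in weights $0,2,3,5$, and one needs a $d_2$, not $d_1$, to finish the cancellation. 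Also note that with your convention of making the new vertices the \emph{largest}-numbered and using $w(J)=\sum j$, deleting a new vertex does not preserve weight at all, so $d_0$ is not the new-vertex deletion; the new vertices must get weight $0$.) The paper avoids this bookkeeping altogether with a different and cleaner idea: instead of filtering all of $\CH^*(\zk)$, it compares $\CH^*(\zk)$ with $\CH^*(\mathcal Z_{\mathcal L})$ for the full subcomplex $\mathcal L = \sK'_{[m]\setminus V(\sigma)}\sqcup\{m+1\}$ (\emph{not} $\sK'$), applies Theorem~\ref{thm_K_homotopy_discrete} to $\mathcal L$, and shows that the \emph{cokernel} of the split injection $\CH^*(\mathcal Z_{\mathcal L})\to\CH^*(\zk)$ is acyclic. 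The cokernel is exactly the sum over $J$ meeting $\sigma$, and for those $J$ the deletion of the new vertex is \emph{always} a homotopy equivalence; so the $E_0$-page of the filtration by $w(J)=|J\cap[m]|$ on the cokernel is acyclic and $E_1=0$. No surviving terms, no telescoping, no higher differentials.

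Second, you try to handle all the new vertices at once. The paper instead reduces to the case where $\sigma$ is a \emph{facet} of $\Delta^n$ (one new vertex) by writing $\sK'\cup_\sigma\Delta^n$ as an iterated facet attachment $(\cdots((\sK'\cup_\sigma\Delta^s)\cup_{\Delta^s}\Delta^{s+1})\cdots)\cup_{\Delta^{n-1}}\Delta^n$, and applies the one-new-vertex argument at each stage. Handling several new vertices simultaneously is not wrong in principle, but it multiplies the $E_0$-differential into a Koszul-type complex indexed by the new vertices and makes the analysis of what survives much harder; the iteration sidesteps this entirely. Finally, a small point: your description of the surviving class as $u_v v_s$ with $v$ the minimal new vertex and $s$ the minimal vertex of $\sigma$ is not correct in general — already in the test example above the surviving class is $[u_2v_4]$, not $[u_4v_1]$; the bidegree $(-1,4)$ is right, but its representative is determined by $\mathcal L$, not by $\sigma$.
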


\begin{proof}
Let $[m]$ be the vertex set of $\sk'$, and let $V(\sigma)\subset[m]$ be the vertex set of~$\sigma$. Let $|V(\sigma)|=s$. We may assume $s<m$, as otherwise $\sK=\Delta^n$ is a simplex. 

If $n=0$, then $\sk=\sk'\sqcup\mathit{pt}$ and we are in the situation of Theorem~\ref{thm_K_homotopy_discrete}.

We first consider the case when $s=n>0$, that is, $\sigma$ is a facet of $\Delta^n$. 
Let $m+1$ be the unique vertex of $\Delta^n$ not contained in~$\sigma$, so that $[m+1]$ is the vertex set of~$\sK$. 
Let $\mathcal{L}=\sk'_{[m]\setminus V(\sigma)}\sqcup\{m+1\}$. By Theorem~\ref{thm_K_homotopy_discrete}, the double cohomology of $\mathcal Z_{\mathcal L}$ has the required form. 
We claim that $\HH^*(\zk)\cong\HH^*(\mathcal Z_{\mathcal L})$. Indeed, consider the cochain complexes $\CH^\ast(\mathcal{Z}_{\sk})$ and $\CH^\ast(\mathcal{Z}_\mathcal{L})$. Since $\mathcal{L}$ is a full subcomplex of $\sk$, we use Proposition~\ref{functorialH} to get a cochain map 
\begin{equation}\label{eq_chain_map_cohom}
\CH^\ast(\mathcal{Z}_{\mathcal{L}})\rightarrow \CH^\ast(\mathcal{Z}_{\mathcal{K}}).
\end{equation}

Let $A$ be the cokernel of \eqref{eq_chain_map_cohom}. Then we have 
\[
A\cong \bigoplus_{J\in \mathcal{T}}\widetilde{H}^\ast(\sk_J),
\]
where $\mathcal{T}=\{J\in [m+1] \colon J\cap V(\sigma) \neq \varnothing\}$. We now calculate the cohomology of $A$ using a spectral sequence as in the proof of Theorem~\ref{thm_K_homotopy_discrete}. Here we use the weight function $w \colon \mathcal{T} \to \ZZ_{\geq 0}$ 
defined by $w(J)=|J\cap[m]|$; it counts the number of elements in $J\subset [m+1]$ after removing $m+1$. Then, we define 
\[
  F_p\colonequals \bigoplus_{J\in{\mathcal T}\colon w(J)\leq p} \widetilde H^\ast(\sK_J),
\]
which gives rise to a bounded increasing filtration on~$A$. 

The $E_0$-term of the associated spectral sequence breaks up into a direct sum of complexes 
\[
  \bigoplus_{J\in \mathcal{T} \colon m+1\in J} \left(\widetilde H^\ast(\sK_J) 
  \rightarrow \widetilde H^\ast(\sK_{J\setminus  \{m+1\}})\right).
\]
Since each inclusion $\sk_{J\setminus \{m+1\}} \to \sk_J$ is a homotopy equivalence, $E_0$ is an acyclic complex, which implies that $E_1$ is trivial. Hence, the chain complex $A$ is acyclic, so that the map  $\HH^\ast(\mathcal{L})  \to \HH^\ast(\mathcal{K})$ induced by \eqref{eq_chain_map_cohom} is an isomorphism, as claimed. 

Now consider the case where $s<n$. In this case, $\sK$ can be obtained from $\sk'$ by iterating the procedure of attaching a simplex along a facet:
\[
  \sK=\sk' \cup_{\sigma} \Delta^n = 
  \left( \left( \left(\sK'\cup_{\sigma} \Delta^s\right) \cup_{\Delta^s} \Delta^{s+1} \right) \cdots \cup_{\Delta^{n-1}} \Delta^n \right).
\]
Hence, the result follows by applying the argument above inductively.
\end{proof}

\begin{theorem}\label{thm_main2}
For a simplicial complex $\sK$, the following conditions are equivalent:
\begin{itemize}
\item[(a)] all full subcomplexes of $\sK$ are homotopy discrete sets of points;
\item[(b)] $\sK$ is flag and its one-skeleton $\mathrm{sk}^{1}(\sK)$ is a chordal graph;
\item[(c)] $\sK$ can be obtained by iterating the procedure of attaching a simplex along a (possibly empty) face, starting from a simplex. 
\end{itemize}
Each of the conditions above implies that $\sK$ is either a simplex, or 
\[
  \HH^{-k, 2\ell}(\mc{Z}_\mc{K})=\begin{cases} 
    \ZZ & \text{for } (-k,2\ell)=(0,0),~(-1,4);\\
    0 & \text{otherwise}.
  \end{cases}
\]
\end{theorem}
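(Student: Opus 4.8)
The plan is to prove the cycle of equivalences (a) $\Leftrightarrow$ (b) $\Leftrightarrow$ (c) and then deduce the double cohomology computation from Theorem~\ref{surgery}. For (b) $\Leftrightarrow$ (c), I would use the standard characterization of chordal graphs via \emph{perfect elimination orderings}: a graph $G$ is chordal if and only if its vertices can be ordered so that each vertex, together with its later neighbours, forms a clique. Passing to clique complexes, this perfect elimination ordering is exactly a recipe for building $\mathrm{cl}(G)$ by attaching one vertex at a time, each along the simplex spanned by its already-present neighbours, starting from a single vertex; reversing the order gives condition (c) with the simplices being single added vertices, and conversely (c) built from such single-vertex attachments yields a perfect elimination ordering. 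One must also observe that attaching a general $n$-simplex along a face can be refined into a sequence of single-vertex attachments (as in the last paragraph of the proof of Theorem~\ref{surgery}), so the two formulations of (c) agree, and that a complex built by face-attachments from a simplex is automatically flag — any minimal non-face would have to be a missing edge, but each attachment step only adds faces spanned by cliques.

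For (a) $\Rightarrow$ (b) and (b) $\Rightarrow$ (a), I would argue as follows. If $\sK$ is flag with chordal one-skeleton, then every full subcomplex $\sK_I$ is again flag with chordal one-skeleton (induced subgraphs of chordal graphs are chordal, and full subcomplexes of flag complexes are flag), so by~\cite{GPTW} the moment-angle complex $\mathcal Z_{\sK_I}$ is a wedge of spheres; one then needs the sharper statement — which also follows from the chordality/perfect-elimination structure, or can be read off from Theorem~\ref{surgery} applied to $\sK_I$ — that $\sK_I$ being either a simplex or built by face-attachments forces $\mathcal Z_{\sK_I}$ to have the homotopy type making $\widetilde H_*(\sK_I)$ concentrated in degree $0$, i.e.\ $\sK_I$ is homotopy discrete. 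Conversely, if $\sK$ is not flag, some minimal non-face $I$ has $|I|\ge 3$, and $\sK_I = \partial\Delta^{|I|-1}$ is a sphere of positive dimension, hence not homotopy discrete; and if $\sK$ is flag but $\mathrm{sk}^1(\sK)$ has an induced cycle $C$ of length $\ge 4$, then $\sK_{V(C)} = C$ is a $1$-sphere, again not homotopy discrete. So the failure of either half of (b) produces a non-homotopy-discrete full subcomplex, giving (a) $\Rightarrow$ (b), while the wedge-of-spheres plus face-attachment analysis gives (b) $\Rightarrow$ (a).

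Finally, for the double cohomology statement: assuming (c), write $\sK = \sK' \cup_\sigma \Delta^n$ where $\sK'$ is the complex obtained at the penultimate stage (nonempty, since we start from a simplex) and $\sigma$ is the attaching face, which is proper in $\sK$ unless $\sK$ itself is that simplex. If $\sK$ is a simplex we are in the trivial case; otherwise Theorem~\ref{surgery} applies verbatim and delivers exactly the stated $\HH^{-k,2\ell}(\mathcal Z_\sK)$. Thus this last implication is essentially immediate once (c) is established.

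I expect the main obstacle to be the precise bookkeeping in (b) $\Leftrightarrow$ (a): translating "all full subcomplexes are homotopy discrete" into the combinatorial statement that $\sK$ is flag and chordal requires knowing not merely that $\mathcal Z_{\sK_I}$ is a wedge of spheres (which~\cite{GPTW} gives for flag chordal complexes) but that the spheres all sit in the bidegree forcing $\widetilde H_*(\sK_I) = \widetilde H_0(\sK_I)$; I would handle this either by invoking the explicit homotopy decomposition behind~\cite{GPTW} or, more self-containedly, by noting that the face-attachment description of chordal clique complexes lets one induct and keep track of reduced homology concentrated in degree $0$ at every stage. The other steps — the perfect-elimination-ordering equivalence and the final appeal to Theorem~\ref{surgery} — are routine.
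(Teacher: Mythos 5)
Your plan reproduces the paper's argument almost step for step: the same minimal-non-face and chordless-cycle obstructions for (a)$\Rightarrow$(b), the same appeal to a perfect elimination ordering for (b)$\Rightarrow$(c) (the paper cites the proof of~\cite[Theorem~4.6]{GPTW}), and the same appeal to Theorem~\ref{surgery} for the double cohomology. The one place your route differs in emphasis is the closing implication of the cycle. The paper proves (c)$\Rightarrow$(a) directly by observing that every full subcomplex of a complex built by face-attachments is itself built by face-attachments, hence a disjoint union of contractible spaces; you instead discuss (b)$\Rightarrow$(a) and offer two options, one via the~\cite{GPTW} wedge-of-spheres theorem and one via the face-attachment induction. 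The second option is the one that works and is what the paper does. The first option does not quite go through as stated: that $\mathcal Z_{\sK_I}$ is a wedge of spheres gives you no control over which Hochster bidegrees those spheres sit in, and pinning down that $\widetilde H_*(\sK_I)$ is concentrated in degree zero is exactly equivalent to what you are trying to prove, so invoking~\cite{GPTW} here is circular unless you also carry along the perfect-elimination data; similarly, Theorem~\ref{surgery} computes $\HH^*$ of the whole complex and does not directly yield the homotopy type of a full subcomplex. So lean on the face-attachment induction (equivalently, close the cycle as (b)$\Rightarrow$(c)$\Rightarrow$(a) rather than a separate (b)$\Rightarrow$(a)): after attaching a simplex along a nonempty face, each connected component remains a cone hence contractible, and an empty attaching face adds a new contractible component; full subcomplexes inherit this structure. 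With that one adjustment, your proof is the paper's proof.
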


\begin{proof}
(a)$\Rightarrow$(b) If $\sK$ is not flag, then any minimal non-face with $\ge3$ vertices is a full subcomplex that is not homotopy discrete. Similarly, if $\mathrm{sk}^{1}(\sK)$ is not chordal, then any its chordless cycle with $\ge4$ vertices is a full subcomplex that is not homotopy discrete.

(b)$\Rightarrow$(c) This follows by considering a perfect elimination ordering on the vertex set of $\sK$, as in the proof of~\cite[Theorem 4.6]{GPTW}. 

(c)$\Rightarrow$(a) If $\sK$ is obtained as described in~(c), then each full subcomplex $\sK_I$ is obtained by the same procedure. This implies that each $\sK_I$ is a disjoint union of contractible spaces.

The formula for the double cohomology follows from the description in~(c) and Theorem~\ref{surgery}.
\end{proof}

\begin{remark}
Note that the class of simplicial complexes described in Theorem~\ref{surgery} is strictly larger than that described in Theorem~\ref{thm_main2}: in the former case one starts with an arbitrary simplicial complex~$\sK'$, while in the latter one starts with a simplex. Also, the class described in Theorem~\ref{surgery} does not exhaust all examples with double cohomology of rank $2$ in bidegrees $(0,0)$ and $(-1,4)$, as we illustrate in Subsection~\ref{fexam}.
\end{remark}

\begin{remark}
A simplicial complex $\sK$ is called \emph{Golod} (over a field $\mathbf{k}$) if the multiplication and all higher Massey products in $\Tor_{\mathbf{k}[v_1, \dots, v_m]}(\mathbf{k}[\sK], \mathbf{k})$ are trivial.
If~$\sK$ is flag and $\mathrm{sk}^{1}(\sK)$ is a chordal graph, then $\sK$ is a \emph{Golod complex} and $\zk$ is homotopy equivalent to a wedge of spheres by~\cite[Theorem~4.6]{GPTW}. Conversely, if $\sK$ is a Golod complex, then $\mathop{\mathrm{sk}}^1(\sK)$ is a chordal graph, but $\sK$ may fail to be flag. Also, if $\mathop{\mathrm{sk}}^1(\sK)$ is a chordal graph, then $\zk$ may fail to be homotopy equivalent to a wedge of spheres, and $\sK$ may fail to be Golod. An example of this situation is provided by a minimal (6-vertex) triangulation of~$\mathbb R P^2$, see Example~\ref{rp2ex} and~\cite[Example~3.3]{GPTW}.
\end{remark}

\begin{remark}
The condition that $\zk$ is homotopy equivalent to a wedge of spheres does not imply the same condition for $\sK$ itself. An example of this situation is provided by a minimal (7-vertex) triangulation of a $2$-torus $T^2$, see~\cite[Proposition~2.2]{Li15}.
By~\cite[Theorem 1.3]{IK20}, if $\sK$ is a triangulation of a closed connected surface, then $\sK$ is Golod if and only if it is 1-neighborly, that is, any two vertices in $\sK$ are connected by an edge. Clearly, such $\sK$ cannot be flag.
\end{remark}

\section{The case of an $m$-cycle}\label{sec_mcycle}
Here we apply the technique developed in the previous section to calculate the double cohomology of the moment-angle complex $\mathcal Z_{\mathcal L}$ corresponding to an $m$-cycle~$\mathcal L$. By a result of McGavran~\cite{mcga79}, the moment-angle complex $\mathcal Z_{\mathcal L}$ is homeomorphic to a connected sum of sphere products:
\[
  \mathcal Z_{\mathcal L}\cong \mathop{\#}_{k=3}^{m-1} \bigl(S^k\times S^{m+2-k}\bigr)^{\#(k-2)\binom{m-2}{k-1}}.
\]

\begin{example}\label{ex_boundary_mgon}
Let $\mathcal L$ be a $5$-cycle (the boundary of a pentagon) with the vertices numbered counterclockwise. 
Then we have $\mathcal Z_{\mathcal L}\cong(S^3\times S^4)^{\# 5}$. The nontrivial cohomology groups and their generators in the Koszul dga~\eqref{eq_Koszul_cohom} are given by
\begin{align*}
  H^0(\mathcal Z_{\mathcal L})&=H^{0,0}(\zk)\cong\Z\langle1\rangle,\\ 
  H^3(\mathcal Z_{\mathcal L})&=H^{-1,4}(\zk)\cong\Z\langle[u_1v_3],[u_1v_4],[u_2v_4],[u_2v_5],[u_3v_5]\rangle,\\
  H^4(\mathcal Z_{\mathcal L})&=H^{-2,6}(\zk)\cong\Z\langle[u_4u_5v_2],[u_2u_3v_5],[u_5u_1v_3],
  [u_3u_4v_1],[u_1u_2v_4]\rangle,\\
  H^7(\mathcal Z_{\mathcal L})&=H^{-3,10}(\zk)\cong\Z\langle[u_1u_2u_3v_4v_5]\rangle,
\end{align*}
see \cite[Example~4.6.11]{bu-pa15}.
There is only one nontrivial differential $d'$:
\[
  0\longrightarrow H^{-2,6}(\mathcal Z_{\mathcal L})\stackrel{d'}{\longrightarrow} H^{-1,4}(\mathcal Z_{\mathcal L})\longrightarrow0.
\]
It is given on the basis elements by
\begin{align*}
  d'[u_4u_5v_2]&=[u_5v_2]-[u_4v_2]=[u_2v_5]-[u_2v_4],\\
  d'[u_2u_3v_5]&=[u_3v_5]-[u_2v_5],\\
  d'[u_5u_1v_3]&=[u_1v_3]-[u_5v_3]=[u_1v_3]-[u_3v_5],\\
  d'[u_3u_4v_1]&=[u_4v_1]-[u_3v_1]=[u_1v_4]-[u_1v_3],\\
  d'[u_1u_2v_4]&=[u_2v_4]-[u_1v_4].
\end{align*}
The corresponding matrix
\[
\begin{pmatrix}
0& 0& 1& -1& 0\\
0& 0& 0& 1& -1\\
-1& 0&0& 0& 1\\
1&-1& 0& 0& 0\\
0& 1&-1& 0& 0
\end{pmatrix}
\]
has rank $4$ and defines a homomorphism onto a direct summand of~$\Z^5$. It follows that the nontrivial double cohomology groups are
\[
  \HH^{0,0}(\mathcal Z_{\mathcal L})\cong\HH^{-1,4}(\mathcal Z_{\mathcal L})\cong\HH^{-2,6}(\mathcal Z_{\mathcal L})\cong\HH^{-3,10}(\mathcal Z_{\mathcal L})\cong\Z.
\]
\end{example}

The next result extends this calculation to an arbitrary $m$-cycle. 

\begin{theorem}\label{prop_HH_mgon}
Let $\mc{L}$ be an $m$-cycle for $m\geq 5$. Then, the double cohomology of $\mc{Z}_\mc{L}$ is 
\[
\HH^{-k, 2\ell}(\mc{Z}_\mc{L})=
\begin{cases} \Z, & (-k, 2\ell)=(0,0), (-1,4), (-m+3, 2(m-2)), (-m+2, 2m);\\
0, & \text{otherwise}. \end{cases}
\]
\end{theorem}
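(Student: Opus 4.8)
The strategy is to compute the $d'$-cochain complex $\CH^*(\mathcal Z_{\mathcal L}) = (H^*(\mathcal Z_{\mathcal L}), d')$ directly from its Hochster decomposition, exploiting the very simple structure of full subcomplexes of an $m$-cycle. For $\mathcal L$ an $m$-cycle on the vertex set $[m]$, a full subcomplex $\mathcal L_I$ for $I \subsetneq [m]$ is a disjoint union of paths (hence homotopy discrete), so $\widetilde H^*(\mathcal L_I)$ is concentrated in degree $0$ and has rank (number of components of $\mathcal L_I$) $-\,1$; the only subcomplex with higher cohomology is $\mathcal L_{[m]} = \mathcal L$ itself, with $\widetilde H^1(\mathcal L) \cong \Z$. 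Correspondingly $H^{-k,2\ell}(\mathcal Z_{\mathcal L})$ is nonzero only for $\ell < m$ (coming from $\widetilde H^0$ of subcomplexes on $\ell$ vertices, so total cohomological degree $\ell - k - 1 = 0$, i.e. $k = \ell - 1$) and for the top class in bidegree $(-(m-2), 2m)$ coming from $\widetilde H^1(\mathcal L)$. So $\CH^*(\mathcal Z_{\mathcal L})$ splits as a direct sum: the single top group $\Z$ in bidegree $(-(m-2),2m)$, which is isolated under $d'$ and survives, contributing the summand $\HH^{-m+2,2m} \cong \Z$; plus the subcomplex assembled from the $\widetilde H^0$ groups, which we must analyze.

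**Main computation.** The remaining part is a cochain complex $C^\bullet$ where $C^{-\ell+1} = \bigoplus_{I \subset [m],\, |I| = \ell} \widetilde H^0(\mathcal L_I)$ for $1 \le \ell \le m-1$, with $d'$ lowering $\ell$ by one and given, up to sign, by the sum over $i \in I$ of the restriction maps $\widetilde H^0(\mathcal L_I) \to \widetilde H^0(\mathcal L_{I \setminus \{i\}})$. I plan to identify this with a more tractable complex. A clean approach: note $\widetilde H^0(\mathcal L_I)$ is generated by differences of components; since $\mathcal L_I$ is a union of arcs obtained by deleting the vertices outside $I$ from the cycle, its set of components is indexed by the "gaps" in $I$ (maximal runs of consecutive vertices of $[m]$ present in $I$, read cyclically), unless $I = [m]$. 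One then recognizes the whole complex as computing the cohomology of a space or as a Koszul-type complex. Alternatively — and this is probably the most robust route — use the spectral sequence / filtration technique from the proof of Theorem~\ref{surgery}: filter by a weight function on subsets $I \subset [m]$, so that on the $E_0$ page the differential $d'$ restricted to the $m$-th coordinate (say) is an isomorphism except in a controlled set of bidegrees, collapse, and read off the surviving $\Z$'s in bidegrees $(0,0)$, $(-1,4)$, $(-m+3, 2(m-2))$. The contributions $(0,0)$ (from $I = \varnothing$) and $(-1,4)$ (from two-element subsets) should appear exactly as in Theorem~\ref{surgery}, since locally near a vertex the cycle looks like an attached edge; the new contribution $(-m+3, 2(m-2))$, i.e. $|I| = m-1$ with $\widetilde H^0(\mathcal L_I) = \Z$ surviving, reflects the "codimension-one" subcomplexes $\mathcal L_{[m] \setminus \{i\}} \cong$ an arc (connected!), so $\widetilde H^0 = 0$ — wait: each $\mathcal L_{[m]\setminus\{i\}}$ is a single arc, hence connected, so $\widetilde H^0 = 0$; the class in bidegree $(-m+3, 2(m-2))$ must instead come from the cokernel of $d'\colon C^{-m+2} \to C^{-m+3}$, i.e. from $|I| = m-2$ subcomplexes (two arcs each, $\widetilde H^0 = \Z$) modulo the image of $d'$ from $|I| = m-1$. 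So the key local fact is that $d'$ out of the $|I|=m-1$ level is zero (those subcomplexes are connected), making $C^{-m+3} = \bigoplus_{|I|=m-2}\Z$ a cocycle space whose cohomology is its quotient by the image of $d'$ from $|I|=m-3$, and one must show this quotient is exactly $\Z$.

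**Expected obstacle.** The hard part is the exact rank computation for the middle differentials — showing that the portion of $\CH^*(\mathcal Z_{\mathcal L})$ in the "interior" bidegrees $2 \le \ell \le m-3$ is acyclic, and that the two boundary contributions at $\ell = m-2$ (giving $(-m+3,2(m-2))$) and at the low end (giving $(0,0)$ and $(-1,4)$) each come out to exactly $\Z$. This is essentially a combinatorial rank computation for a large family of restriction-map matrices indexed by subsets of a cyclically ordered set, and matching the answer to the four claimed $\Z$'s. I expect the cleanest execution uses the filtration/spectral sequence machinery of Theorem~\ref{surgery}, choosing the weight so that one "removable" vertex of the cycle (say vertex $m$) trivializes most of the complex on $E_1$, reducing everything to a complex attached to the path $\mathcal L_{[m-1]}$ (an arc, essentially the "attach a simplex along a face" situation, but the cyclic identification of the two endpoints of the arc via the edges $\{m-1,m\}$ and $\{m,1\}$ produces exactly the two extra surviving classes). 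Carefully bookkeeping the bidegrees of these two extra classes — confirming they land in $(-m+3, 2(m-2))$ and $(-m+2, 2m)$ — and handling the base cases $m = 5, 6$ separately if the induction needs it, will be the main technical work. A sanity check at every stage is the Euler characteristic: by Corollary~\ref{cor_euler_char_of_HH_for_simplex} the total rank must have $\chi = 0$, and indeed $1 - 1 + 1 - 1 = 0$ with the claimed degrees $(0,0), (-1,4), (-m+3,2(m-2)), (-m+2,2m)$ having parities $+,-,(-1)^{m-3},(-1)^{m-2}$, which alternate, so this is consistent.
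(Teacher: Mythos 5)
Your proposal is a plan rather than a proof: you identify the correct structure (the full subcomplexes $\mathcal L_I$ for proper $I$ are disjoint unions of arcs, the top class is isolated, the $\widetilde H^0$ part forms the interesting subcomplex), but the core computation --- showing the interior levels $2 \le \ell \le m-3$ are acyclic and that the boundary levels contribute exactly one $\ZZ$ each --- is explicitly deferred as an ``expected obstacle'' and never carried out. That rank computation \emph{is} the theorem; the rest is framing.

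Moreover, the route you sketch for overcoming it has a real difficulty you should be wary of. The weight-filtration argument from Theorem~\ref{thm_K_homotopy_discrete} works because the added vertex $0$ is \emph{isolated}: for $0\in J$ the restriction $\widetilde H^p(\sK_J)\to\widetilde H^p(\sK_{J\setminus\{0\}})$ is an isomorphism in all positive degrees, which instantly wipes out the $E_0$-page. For a cycle there is no isolated vertex. If you try to single out a vertex $m$ and remove it, the effect of $\mathcal L_I\to\mathcal L_{I\setminus\{m\}}$ on $\widetilde H^0$ depends on whether both neighbours $m-1,1$ lie in $I$ (two arcs merge), exactly one does (an arc shortens), or neither does ($\{m\}$ is an isolated point of $\mathcal L_I$); the map is sometimes an isomorphism, sometimes has a kernel, and sometimes has a cokernel, so the $E_0$-differential does not collapse cleanly. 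The bookkeeping you wave at is genuinely where all the difficulty lives.

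For comparison, the paper takes a genuinely different route and it is worth understanding why it is easier. Rather than attacking $\CH^*(\mathcal Z_{\mathcal L})$ head-on, it embeds $\mathcal L$ into $\sk = \mathcal L \cup \{1,2,3\}$ (fill in one triangle), applies Theorem~\ref{surgery} to obtain $\HH^*(\zk)$ for free, and then uses the functorial chain maps $f\colon\CH^*(\zk)\to\CH^*(\mathcal Z_{\mathcal L})$ (degree $p=1$, cohomology) and $g\colon\CH_*(\mathcal Z_{\mathcal L})\to\CH_*(\zk)$ (degree $p=0$, homology). The kernel of $f^1$ is a single $\ZZ$ concentrated in one bidegree; the kernel of $g_0$ turns out to be isomorphic to a shifted augmented cochain complex of $\partial\Delta^{m-4}$, hence has one $\ZZ$ of cohomology. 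Two short exact sequences and their long exact sequences, plus a final appeal to the universal coefficient theorem, give the answer without any matrix rank computation at all. This transfers the hard combinatorics to a classical complex whose cohomology is known. Your direct approach could in principle work, but you would have to supply the combinatorial argument (e.g.\ identify the $\widetilde H^0$-subcomplex with an explicit known complex, as the paper does for $\ker g_0$) --- and note also that you propose working purely in cohomology, while the paper finds it genuinely more convenient to pass to homology for the $p=0$ part and come back via universal coefficients.
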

\begin{proof}
Let $\sk$ be the simplicial complex obtained  by adding a 2-simplex $\{1,2,3\}$ to $\mc{L}$ and consider the natural inclusion $\mc{L}\hookrightarrow \sk$ (see Figure \ref{fig_emb_mgon}). One can apply Theorem \ref{surgery} for $\sk$, which gives us  
\begin{equation}\label{eq_mgon_HHzk}
\HH^{-k, 2\ell}(\zk)=\begin{cases} \Z, & (-k, 2\ell)=(0,0), (-1,4); \\
0, & \text{otherwise}.
\end{cases}
\end{equation}

\begin{figure}
\begin{tikzpicture}[scale=0.7]
	\fill [black] (0,0) circle (2pt) node[above left] at (0,0){\small 1};
	\fill [black] (1/2,1) circle (2pt) node[above left] at (1/2,1){\small 2};
    	\fill [black] (3/2,3/2) circle (2pt) node[above left] at (3/2,3/2){\small 3};
    	\fill [black] (2.5,3/2) circle (2pt) node[above] at (2.5,3/2){\small 4};
    	\fill [black] (0,-1) circle (2pt) node[left] at (0,-1){\small$m$};
	
	\node[rotate=-15] at (1, -1.7) {$\cdots$};
	\node[rotate=-60]  at (3.3,0.5) {$\cdots$};
	
	\node at (1.5, 0) {$\mathcal{L}$};
	\node at (5,0) {$\hookrightarrow$};
	\draw (1/2,-1.5)--(0,-1)--(0,0)--(1/2,1)--(3/2,3/2)--(2.5, 1.5)--(3, 1);
	
\begin{scope}[xshift=200]
	\draw[fill=yellow] (0,0)--(1/2,1)--(3/2,3/2)--cycle;
	\fill [black] (0,0) circle (2pt) node[above left] at (0,0){\small 1};
	\fill [black] (1/2,1) circle (2pt) node[above left] at (1/2,1){\small 2};
    	\fill [black] (3/2,3/2) circle (2pt) node[above left] at (3/2,3/2){\small 3};
    	\fill [black] (2.5,3/2) circle (2pt) node[above] at (2.5,3/2){\small 4};
    	\fill [black] (0,-1) circle (2pt) node[left] at (0,-1){\small$m$};
	
	\node[rotate=-15] at (1, -1.7) {$\cdots$};
	\node[rotate=-60]  at (3.3,0.5) {$\cdots$};
	\node at (1.5, 0) {$\mathcal{K}$};

	\draw (1/2,-1.5)--(0,-1)--(0,0)--(1/2,1)--(3/2,3/2)--(2.5, 1.5)--(3, 1);
\end{scope}
\end{tikzpicture}
\caption{An inclusion of $m$-gon to a simplicial complex.}
\label{fig_emb_mgon}
\end{figure}
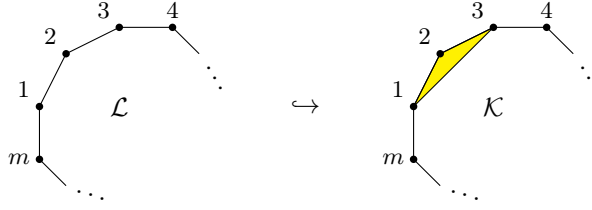

Now, we apply the functoriality property of Proposition~\ref{functorialH} to get chain maps
\begin{equation}\label{eq_mgon_functoriality}
f \colon \CH^\ast(\zk) \to \CH^\ast(\mc{Z}_\mc{L}) \quad \text{and} \quad  g\colon \CH_\ast(\mc{Z}_\mc{L})\to \CH_\ast(\zk).
\end{equation}
Recall from \eqref{eq_functoriality_deg_by_deg} that the maps $f$ and $g$ in \eqref{eq_mgon_functoriality} can be decomposed into $\bigoplus_{p\geq 0}f^p$ and $\bigoplus_{p\geq 0}g_p$ respectively. In this case,  we only need to consider the cases $p=0$ and $1$ because 
$\widetilde{H}^p(\sk_I)$ and $\widetilde{H}^p(\mathcal L_I)$ vanish for $p>1$.

First we consider the case $p=1$. Note that $\widetilde{H}^1(\mc{L}_I)=\Z$ only when  $I=[m]$. In this case, $\widetilde{H}^1(\mc{K}_{[m]})=\Z$ as well, hence $\coker f^1=0$. Thus $f^1$ yields a short exact sequence 
\begin{equation}\label{eq_mgon_ses_p=1}
\begin{tikzcd}[column sep=small]
0\rar& \ker f^1 \rar&  \CH^1(\zk)\rar{f^1} & \CH^1(\mc{Z}_\mc{L}) \rar& 0.
\end{tikzcd}
\end{equation}

Observe that $\widetilde{H}^1(\sk_I)=\Z$ if $I=[m]$ or $[m]\setminus \{2\}$. In particular, when $I=[m]$, the map $f_{[m]}^1 \colon \widetilde{H}^1(\sk_{[m]}) \to \widetilde{H}^1(\mc{L}_{[m]})$ is an isomorphism. Hence the chain complex $\ker f^1$ with the differential $d'$ induced from that of $\CH^\ast(\zk)$ consists of a single nontrivial term $\ker \big( f^1_{[m]\setminus\{2\}}\colon \widetilde{H}^1(\sk_{[m]\setminus \{2\}}) \to \widetilde{H}^1(\mc{L}_{[m]\setminus\{2\}})\big)$ which is of bidegree $(-m+3, 2(m-1))$. This implies that 
\begin{equation}\label{eq_mgon_Hker}
H^{-k, 2\ell}(\ker f^1, d')=\begin{cases} \Z, & (-k, 2\ell)=(-m+3, 2(m-1)); \\
0, &\text{otherwise}.
\end{cases}
\end{equation}

Now, we consider the long exact sequence of double cohomology
\begin{equation*}
\begin{tikzcd}[row sep=tiny, column sep=tiny]
\qquad \qquad\qquad \qquad \cdots \rar & \HH^{-\ell+2, 2\ell}(\zk)\rar  & H^{-\ell+2, 2\ell}(\mc{Z}_\mc{L}) \rar &~  \\
H^{-\ell+3, 2(\ell-1)}(\ker f^1, d') \rar &\HH^{-\ell+3, 2(\ell-1)}(\mc{Z}_\mc{K}) \rar &\HH^{-\ell+3, 2(\ell-1)}(\mc{Z}_\mc{L}) \rar & \cdots  
\end{tikzcd}
\end{equation*}
induced from the short exact sequence \eqref{eq_mgon_ses_p=1}. Then we conclude by \eqref{eq_mgon_HHzk} and \eqref{eq_mgon_Hker} that 
\begin{equation}\label{eq_mgon_HH_m}
\HH^{-\ell+2, 2\ell}(\mc{Z}_\mc{L})=
\begin{cases} 
H^{-m+3, 2(m-1)}(\ker f^1, d')=\Z, & \ell=m;\\
0, &\text{otherwise}.
\end{cases}
\end{equation}

For the case $p=0$, we consider the chain map $g_0\colon \CH_0(\mc{Z}_\mc{L}) \to \CH_0(\zk)$. We have $\widetilde H_0(\mc{K}_I)=\widetilde H_0(\mc{L}_I)$ unless $I$ satisfies $\{1,3\} \subset I \subsetneq [m]\setminus \{2\}$. In the latter case,
\[
\rank \widetilde{H}_0(\mc{L}_I) = \rank \widetilde{H}_0(\sk_I) +1.
\]
Hence, $\coker g_{0,I}$ is trivial for all $I\subset [m]$ and $\ker g_{0,I}$ is of rank $1$ only when  $\{1,3\} \subset I \subsetneq [m]\setminus \{2\}$. Therefore, $g_0$ gives a short exact sequence of chain complexes 
\begin{equation}\label{eq_mgon_ses_p=0}
\begin{tikzcd}[column sep=small]
0 \rar& \ker g_0 \rar & \CH_0(\mc{Z}_\mc{L}) \rar{g_0}&  \CH_0(\zk) \rar&  0,
\end{tikzcd}
\end{equation}
where the chain complex $\ker g_0$ is given by 
\begin{multline}\label{eq_mgon_ker_g0}
0 \to \ker g_{0,\{1,3\}} \stackrel{\partial'}\longrightarrow  
\displaystyle\bigoplus_{\small{\substack{\{1,3\} \subset I  \subsetneq [m]\setminus \{2\} \\ |I| =  3}} } \ker g_{0,I}
\stackrel{\partial'}\longrightarrow \cdots\\
\cdots \stackrel{\partial'}\longrightarrow 
\displaystyle\bigoplus_{\small{\substack{\{1,3\} \subset  I \subsetneq  [m]\setminus \{2\} \\ |I| =  m-3}} } \ker g_{0,I} 
\stackrel{\partial'}\longrightarrow
\displaystyle\bigoplus_{\small{\substack{\{1,3\} \subset  I\subsetneq [m]\setminus \{2\} \\ |I| =  m-2} }} \ker g_{0,I} 
\to 0.
\end{multline}
Here the bigrading of $\bigoplus_{\small{\substack{\{1,3\} \subset  I\subsetneq [m]\setminus \{2\} \\ |I| =  \ell} }} \ker g_{0,I}$ is $(-\ell+1,2\ell)$ and the total grading is $\ell+1$. The boundary operator 
induced from that of $\CH_\ast(\mc{Z}_\mc{L})$ is given by
\begin{equation}\label{eq_boundary'_at_0}
\partial' = -\sum_{
\substack{\{1,3\} \subset I\subsetneq  [m]\setminus\{2\},\\
j\in [m]\setminus I}} \varepsilon(j,I)\phi_{0;I,j}
\end{equation}
where $\phi_{0;I,j} \colon \widetilde{H}_0(\mc{L}_I) \to \widetilde{H}_0(\mc{L}_{I\cup \{j\}})$ is the homomorphism induced by the inclusion $\mc{L}_I\hookrightarrow \mc{L}_{I\cup \{j\}}$, see Subsection~\ref{homology_HH}. To be more precise, if $v_i$ is the $i$th vertex of $\mc{L}$, then $\ker g_{0,I}$ for $\{1,3\} \subset I \subsetneq [m]\setminus \{2\}$ is generated by 
\[
  x_I \colonequals [v_1]-[v_3]\in \widetilde{H}_0(\mc{L}_I)
  \subset H_{-|I|+1, 2|I|}(\zl)
\]
and $\phi_{0;I,j}$ sends $x_I$ to $x_{I\cup \{j\}}$. 

Now, we consider the augmented simplicial cochain complex $\mathscr{C}^\ast$
\[
0\to C^{-1} \to  C^{0} \to \cdots \to C^{m-6} \to C^{m-5} \to 0
\]
of the boundary $\partial \Delta^{m-4}$ of the $(m-4)$-simplex on $[m]\setminus \{1,2,3\}$. For each simplex $I\subsetneq [m]\setminus \{1,2,3\}$, we denote by $y_I$ the corresponding generator of the simplicial cochain group $C^{|I|-1}$. Then, the chain map $\theta \colon \ker g_0 \to \mathscr{C}^\ast[4]$  defined by $\theta(x_I)=-y_I$ is an isomorphism of chain complexes, where the grading of $\mathscr{C}^\ast$ is shifted by $4$ and the minus sign matches the minus sign in \eqref{eq_boundary'_at_0}. It follows that
\[
  H_{-k, 2\ell}(\ker g_0, \partial')=
  \begin{cases} \mathbb Z,& (-k, 2\ell)=(-m+3, 2(m-2));\\
  0,&\text{otherwise}.
  \end{cases}
\]

Consider the homology long exact sequence of \eqref{eq_mgon_ses_p=0}:
\[
\begin{tikzcd}[row sep=tiny, column sep=tiny]
 &\qquad \qquad \cdots \rar  & \HH_{-\ell+2, 2(\ell-1)}(\zk) \rar &~  \\
H_{-\ell+1, 2\ell}(\ker g_0, \partial') \rar &\HH_{-\ell+1, 2\ell}(\mc{Z}_\mc{L}) \rar &\HH_{-\ell+1, 2\ell}(\zk) \rar & \cdots.
\end{tikzcd}
\]
Assuming $m\geq 6$, we have 
\begin{equation}\label{eq_mgon_HH_m-1}
\HH_{-\ell+1, 2\ell}(\mc{Z}_\mc{L})\cong 
\begin{cases} H_{-m+3, 2(m-2)}(\ker g_0, \partial')=\Z, &\ell=m-2;\\
\HH_{-1,4}(\zk)=\Z, & \ell=2;\\
0, & \text{otherwise}.
\end{cases} 
\end{equation}
The double cohomology has the same form by the universal coefficient theorem, as $\HH_\ast(\zl)$ is free. The result follows from \eqref{eq_mgon_HH_m} and \eqref{eq_mgon_HH_m-1} together with the obvious isomorphism $\HH^{0,0}(\zl)=\Z.$ For $m=5$, we refer to Example~\ref{ex_boundary_mgon}. 
\end{proof}

\begin{remark}
When $\mc{L}$ is a $4$-cycle, we have $\HH^{-1,4}(\zl)\cong \Z^2$ by Example \ref{ex_4gon}. Hence, Proposition~\ref{prop_HH_mgon} holds for the case of $m=4$ as well, by counting the degree $(-1,4)$ twice. 
\end{remark}

\section{Further observations, examples and questions}\label{sec_examples}

\subsection{Top classes, wedge decomposability and duality}\label{subsec_top_class}

An element $\alpha\ne0$ in  $\widetilde{H}^*(\sK)$ is called a \emph{top class} if the restriction of $\alpha$ to $\widetilde H^*(\sK_I)$ is~$0$ for any proper $I\subset [m]$. For example, if $\sK$ is a triangulated connected closed $(n-1)$-dimensional (pseudo)manifold, then the fundamental class is a top class in $\widetilde H^{n-1}(\sK)$.

\begin{proposition}\label{topclass}
A top class $\alpha\in \widetilde{H}^{n-1}(\sK)$ survives to $\HH^{-(m-n),2m}(\zk)$.
\end{proposition}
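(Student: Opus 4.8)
The plan is to work with the bicomplex description of $\HH^*(\zk)$ from Theorem~\ref{1db}, using the explicit model $\bigl(\bigoplus_{I\subset[m]} C^*(\sK_I),d,d'\bigr)$. Let $\alpha\in\widetilde H^{n-1}(\sK)$ be a top class; I want to produce a $d'$-cocycle in $H^*(\zk)$ lying in the correct bidegree that represents a nonzero class in $\HH^*(\zk)$. Observe first that $\widetilde H^{n-1}(\sK_{[m]})$ sits inside $H^{-k,2\ell}(\zk)$ with $\ell=|[m]|=m$ and $\ell-k-1=n-1$, i.e. $k=m-n$; so the bidegree $(-(m-n),2m)$ in the statement is precisely the Hochster summand $\widetilde H^{n-1}(\sK_{[m]})$. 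In the quotient Koszul model $R^*(\sK)$, a representing cocycle for $\alpha$ is a $d$-cocycle of the form $\sum_L c_L\, v_L$ where $L$ runs over $(n-1)$-simplices of $\sK$ (no $u$-variables appear since $I=[m]$ forces $J=\varnothing$).

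The first key step is to check that such a representative is automatically a $d'$-cocycle: since $d'v_j=0$ for all $j$, we have $d'(\sum_L c_L v_L)=0$ on the nose in $R^*(\sK)$, hence $d'$ kills the class $\alpha$ in $\CH^*(\zk)$. So $\alpha$ determines a class in $\HH^*(\zk)$ in bidegree $(-(m-n),2m)$; the substance is to show this class is nonzero, i.e. that $\alpha$ is not in the image of $d'\colon H^{-(m-n)-1,2m+2}(\zk)\to H^{-(m-n),2m}(\zk)$. But $H^{*,2m+2}(\zk)=\bigoplus_{|I|=m+1}\widetilde H^*(\sK_I)=0$ since there are only $m$ vertices, so the target bidegree $2m$ receives $d'$ only from bidegree $2m+2$, which vanishes. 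Therefore the differential into $H^{-(m-n),2m}(\zk)$ is zero, and $\alpha$ survives provided it is nonzero in $H^*(\zk)$ itself — which holds because the Hochster summand $\widetilde H^{n-1}(\sK_{[m]})$ injects into $H^*(\zk)$.

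The only real content left is to confirm $\alpha\ne 0$ already in $\HH^*$, i.e. that the $d'$-differential \emph{out} of bidegree $(-(m-n),2m)$ does not matter — and indeed it does not, because $\HH^*$ is the homology at that spot, so we need $\alpha\in\ker d'$ (done above) and $\alpha\notin\im d'$ (done above); the "top class" hypothesis is what guarantees that, although $d'$ applied to a lift of $\alpha$ into bidegree $2m-2$ lands in $\bigoplus_{|I|=m-1}\widetilde H^{n-1}(\sK_I)$, these restrictions all vanish, so $\alpha$ is not hit — wait, more carefully: the top-class hypothesis says $\alpha$ restricts to $0$ on every proper $\sK_I$, but here we need that $\alpha$ is not a $d'$-\emph{boundary}, which as noted is automatic from the grading. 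The top-class hypothesis is actually needed to ensure $d'$ \emph{from} $\alpha$'s bidegree is well-controlled only if one wants the stronger statement, but for "survives to $\HH^{-(m-n),2m}(\zk)$" the grading argument suffices; the hypothesis enters when identifying that $\alpha$ genuinely sits in the top Hochster summand with $I=[m]$ rather than being a sum of classes supported on smaller $I$.

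The main obstacle I anticipate is bookkeeping the bidegrees and signs correctly — in particular verifying that the Hochster summand $\widetilde H^{n-1}(\sK_{[m]})$ lands exactly in bidegree $(-(m-n),2m)$ and that no nonzero $d'$ can enter it — and making precise the role of the top-class condition. Concretely, I would: (1) fix the bidegree identification $\widetilde H^{n-1}(\sK_{[m]})\hookrightarrow H^{-(m-n),2m}(\zk)$; (2) note $d'$-cocyclicity of any $v$-only representative; (3) observe $H^{\bullet,2m+2}(\zk)=0$ for dimension reasons, so the incoming $d'$ vanishes; (4) conclude $\alpha$ represents a nonzero class in $\HH^{-(m-n),2m}(\zk)$. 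The top-class hypothesis guarantees that $\alpha$ is not merely the image under $d'$ of something — but as the grading already forces this, its real function is to pin $\alpha$ to the full-subcomplex $\sK_{[m]}$ summand, which is where the claimed bidegree comes from.
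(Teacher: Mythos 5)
Your argument has a genuine gap stemming from a wrong Koszul representative, which then propagates into a misattribution of where the top-class hypothesis is actually used.

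The class $\widetilde H^{n-1}(\sK_{[m]})$ lands in $H^{-(m-n),2m}(\zk)$, which you computed correctly. But under the isomorphism $f$ of Theorem~\ref{1db}, a cochain $\alpha_{L,[m]}\in C^{n-1}(\sK_{[m]})$ (here $|L|=n$) corresponds to $\pm\,u_{[m]\setminus L}v_L\in R^{-(m-n),2m}(\sK)$, \emph{not} to $v_L$. Indeed the $u$-factor carries the bidegree $(-(m-n),2(m-n))$, and for a genuine top class $n<m$ always holds (if $n=m$ then $\widetilde H^{m-1}(\sK)=0$), so $u_{[m]\setminus L}$ is never empty. Consequently $d'(\sum_L c_L\,u_{[m]\setminus L}v_L)\neq 0$ at the cochain level in general, and the claim that the representative is $d'$-closed ``on the nose'' fails.

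What actually holds, and is the whole point of the top-class hypothesis, is that $d'[\alpha]=0$ \emph{in $H^*(\zk)$}: by formula~\eqref{d'simp}, $d'[\alpha]$ is a signed sum over $i\in[m]$ of the restrictions $\psi_{n-1;i,[m]}(\alpha)\in\widetilde H^{n-1}(\sK_{[m]\setminus\{i\}})$, and each of these is zero precisely because $\alpha$ is a top class. Your final paragraph has this backwards: you assert that ``for `survives to $\HH^{-(m-n),2m}(\zk)$' the grading argument suffices'' and that the top-class hypothesis merely ``pins $\alpha$ to the full-subcomplex summand.'' Neither is right. The grading argument (no incoming differential since $\ell=m+1$ is impossible) only shows $\alpha$ is not a $d'$-coboundary; it says nothing about $\alpha$ being a $d'$-cocycle. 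And $\alpha$ lies in the $I=[m]$ Hochster summand by hypothesis, independently of being a top class. The top-class condition is precisely what kills the outgoing differential $d'\colon H^{-(m-n),2m}\to H^{-(m-n)+1,2m-2}$ on $[\alpha]$, which is what the paper's one-line proof (``the geometric definition of $d'$'') is invoking. Once you fix these two points — the correct representative, and the correct role of the hypothesis as forcing $d'[\alpha]=0$ — the proof is the paper's.
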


\begin{proof}
This follows from the geometric definition of $d'$ (Subsection~\ref{subsec_HH_Cohomology}).
\end{proof}

We call a simplicial complex $\sK$ \emph{wedge decomposable} if it can be written as a nontrivial union $\mathcal L\cup_{\varDelta^t}\mathcal M$ of two simplicial complexes $\mathcal L$ and $\mathcal M$ along a nonempty simplex $\varDelta^t$ that is not the whole of $\sK$.
This corresponds to a graph being a nontrivial clique sum.
We have shown in Theorem \ref{surgery} that if $\mathcal L$ or $\mathcal M$ is a simplex, then $\HH^*(\zk)\cong\Z\oplus\Z$ with one $\mathbb{Z}$ summand in each of the bidegrees  $(0,0)$ and $(-1,4)$. Examples~\ref{TwoTriangles} and~\ref{TwoSquares} below give two more wedge decomposable complexes  having this property, in which neither $\mathcal{L}$ nor $\mathcal{M}$ is a simplex.

\begin{question}
Is it true for all wedge decomposable $\sK$ to have $\HH^*(\zk)\cong\Z\oplus\Z$ in bidegrees $(0,0)$ and $(-1,4)$? 
Does there exist a non-wedge-decomposable complex $\sK$ also having this property?
\end{question}

Recently Valenzuela \cite{val23} has shown all wedge decomposable $\sK$ have $\HH^*(\zk)\cong\Z\oplus\Z$ in bidegrees $(0,0)$ and $(-1,4)$ giving an affirmative answer to the first question. 

As an immediate consequence of Proposition \ref{topclass}, we have that if $\sk(\neq \partial\varDelta^1)$  has $\HH^*(\zk)\cong\Z\oplus\Z$ in bidegrees $(0,0)$ and $(-1,4)$, then $\sk$ does not have a top class. For instance, we know from Proposition~\ref{prop_boundary_simplex} that $\HH^*(\zk)\cong\Z\oplus \Z$ in bidegrees $(0,0)$ and $(-1, 2m)$ when $\sK$ is the boundary of a simplex $\partial \varDelta^{m-1}$. In particular, $\HH^{-1, 2m}(\zk) \cong \Z$ is generated by the class represented by the top class in $H^{2m-1}(\sk)$. We also have the following. 

\begin{proposition}
If $\sK$ has a top class, then $\sK$ is not wedge decomposable.
\end{proposition}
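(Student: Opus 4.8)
The plan is to prove the contrapositive: assume $\sK=\mathcal L\cup_{\varDelta^t}\mathcal M$ is a nontrivial wedge decomposition, with $\sigma:=\mathcal L\cap\mathcal M=\varDelta^t$ a nonempty simplex and $\mathcal L,\mathcal M$ proper subcomplexes of $\sK$, and deduce that $\widetilde H^*(\sK)$ has no top class.

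First I would unwind the combinatorics of the decomposition. Put $A=V(\mathcal L)$ and $B=V(\mathcal M)$. Every vertex of $\sK$ is a $0$-simplex of $\mathcal L\cup\mathcal M$, so $A\cup B=[m]$; a common vertex of $\mathcal L$ and $\mathcal M$ is a vertex of $\mathcal L\cap\mathcal M=\sigma$, so $A\cap B=V(\sigma)$. Next, $\mathcal L$ and $\mathcal M$ are \emph{full} subcomplexes: $\mathcal L=\sK_A$ and $\mathcal M=\sK_B$. Indeed, if $F\in\sK$ with $F\subseteq A$, then $F\in\mathcal L$ or $F\in\mathcal M$; in the latter case $F\subseteq A\cap B=V(\sigma)$, and since $\sigma=\varDelta^t$ is a full simplex this forces $F\in\sigma\subseteq\mathcal L$. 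Finally, nontriviality ($\mathcal L\ne\sK$ and $\mathcal M\ne\sK$) forces $A\subsetneq[m]$ and $B\subsetneq[m]$, because $A=[m]$ would give $\mathcal L=\sK_{[m]}=\sK$.

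Next I would apply the reduced Mayer--Vietoris sequence to the simplicial decomposition $\sK=\mathcal L\cup\mathcal M$ with intersection $\sigma$. As $\sigma=\varDelta^t$ is contractible, $\widetilde H^*(\sigma)=0$, so the restriction homomorphism
\[
  \bigl(i_{\mathcal L}^*,i_{\mathcal M}^*\bigr)\colon\widetilde H^k(\sK)\longrightarrow\widetilde H^k(\mathcal L)\oplus\widetilde H^k(\mathcal M)
\]
is an isomorphism in every degree $k$; in particular it is injective. Now suppose $\alpha\in\widetilde H^{n-1}(\sK)$ were a top class. Since $A$ and $B$ are proper subsets of $[m]$ and $\mathcal L=\sK_A$, $\mathcal M=\sK_B$, the defining property of a top class gives $i_{\mathcal L}^*\alpha=0=i_{\mathcal M}^*\alpha$. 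Injectivity of the displayed map then forces $\alpha=0$, contradicting $\alpha\ne0$. Hence $\sK$ admits no top class, which is the required statement.

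The one point demanding a little care is the precise meaning of ``wedge decomposable'': one must make sure the definition genuinely forces $\mathcal L$ and $\mathcal M$ to be proper subcomplexes (equivalently $A,B\subsetneq[m]$), since this is exactly what lets the top-class hypothesis be applied to $\sK_A$ and $\sK_B$; everything else is routine. An alternative, less self-contained, route combines Valenzuela's theorem~\cite{val23} (a wedge decomposable $\sK$ has $\HH^*(\zk)\cong\Z\oplus\Z$ concentrated in bidegrees $(0,0)$ and $(-1,4)$) with Proposition~\ref{topclass}: a top class $\alpha\in\widetilde H^{n-1}(\sK)$ survives to a nonzero class in $\HH^{-(m-n),2m}(\zk)$, so $(-(m-n),2m)$ must be $(0,0)$ or $(-1,4)$; the former is impossible and the latter forces $\sK=\partial\varDelta^1$, which is not wedge decomposable (two disjoint points have empty intersection, not a nonempty simplex). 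I would present the Mayer--Vietoris argument as the main proof, since it is direct and avoids appealing to~\cite{val23}.
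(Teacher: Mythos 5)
Your proof is correct and takes the same route the paper indicates: the paper's proof is just the one-line ``Follows from the Mayer--Vietoris sequence,'' and you have spelled out exactly that argument, correctly identifying $\mathcal L=\sK_A$, $\mathcal M=\sK_B$ as proper full subcomplexes and using contractibility of $\varDelta^t$ to get injectivity of $\widetilde H^*(\sK)\to\widetilde H^*(\mathcal L)\oplus\widetilde H^*(\mathcal M)$.
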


\begin{proof}
Follows from the Mayer--Vietoris sequence. 
\end{proof}

If $\sK$ is a triangulated $(n-1)$-dimensional sphere, then a generator $\alpha\in\widetilde H^{n-1}(\sK)\cong\Z$ is a top class and, therefore,
\begin{equation}\label{Zgoren}
  \HH^{-(m-n),2m}(\zk)\cong\widetilde H^{n-1}(\sK)\cong\Z.
\end{equation}

A simplicial complex $\sK$ of dimension $(n-1)$ is called a \emph{Gorenstein complex} over a field $\mathbb F$ if the face ring $\mathbb F[\sK]$ is Gorenstein, that is, 
\[
  \Tor^{-i,*}_{\mathbb F[v_1,\ldots,v_m]}\bigl(\mathbb F[\sK],\mathbb F\bigr)=0 \text{ for }i>m-n
  \text{ and } \Tor^{-(m-n),*}_{\mathbb F[v_1,\ldots,v_m]}
  \bigl(\mathbb F[\sK],\mathbb F\bigr)\cong\mathbb F.
\]
A Gorenstein complex $\sK$ is called \emph{Gorenstein*} if $\sK$ is not a cone over a subcomplex. If $\sK$ is Gorenstein, then $\sK=\mathcal L\mathbin{*}\varDelta^s$ for some $s$, where $\mathcal L$ is Gorenstein* and $\varDelta^s$ is a simplex, see~\cite[\S3.4]{bu-pa15}. 

There is the following homological characterisation of Gorenstein* complexes: $\sK$ is Gorenstein* if and only if the link $\lk_\sk I$ of any simplex $I\in\sk$, including~$\varnothing$, is a homology sphere of dimension $\dim\lk_\sk$, cf.~\cite[Theorem~3.4.2]{bu-pa15}. It follows that a triangulated sphere $\sK$ is Gorenstein* over any~$\mathbb F$.
A Gorenstein* complex~$\sK$ satisfies~\eqref{Zgoren} with coefficients in~$\mathbb F$.

A graded commutative connected $\mathbb F$-algebra $A$ is called a \emph{Poincar\'e algebra} of dimension $d$ if $A=\bigoplus_{i=0}^dA^i$, the graded components $A^i$ are finite dimensional over~$\mathbb F$, and the $\mathbb F$-linear maps
\[
  A^i\to\Hom_{\mathbb F}(A^{d-i},A^d),\quad
  a\mapsto\phi_a,\quad\text{where } \phi_a(b)=ab,
\]
are isomorphisms for $0\le i\le d$.

By~\cite[Theorem~4.6.8]{bu-pa15}, the ordinary cohomology $H^*(\zk;\mathbb F)$ is a Poincar\'e algebra if and only if $\sK$ is a Gorenstein complex. If $\sK$ is Gorenstein* of dimension $(n-1)$, then $\Tor^{-(m-n),2m}_{\mathbb F[v_1,\ldots,v_m]}(\mathbb F[\sK],\mathbb F)\cong\mathbb F$ and $H^*(\zk;\mathbb F)$ is a Poincar\'e algebra of dimension $(m+n)$. For the double cohomology $\HH^*(\zk)$, we have the following.

\begin{proposition}\label{HHPA}
If $\sk$ is a Gorenstein complex over a field $\mathbb F$, then the double cohomology $\HH^\ast(\zk;\mathbb F)$ is a Poincar\'e algebra. In particular, if $\sk$ is Gorenstein*  of dimension~$(n-1)$, then
\[
  \dim \HH^{-k, 2\ell}(\zk;\mathbb F) = \dim \HH^{-(m-n)+k, 2(m-\ell)}(\zk;\mathbb F). 
\]
\end{proposition}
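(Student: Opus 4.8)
The plan is to show that the differential graded algebra $\CH^\ast(\zk;\mathbb F) = (H^\ast(\zk;\mathbb F), d')$ is itself a "dga with Poincaré duality at the chain level," in the sense that the underlying algebra $H^\ast(\zk;\mathbb F)$ is a Poincaré algebra of dimension $(m+n)$ and the differential $d'$ is compatible with that duality. The key input is \cite[Theorem~4.6.8]{bu-pa15}, which tells us that $H^\ast(\zk;\mathbb F)$ is a Poincaré algebra when $\sK$ is Gorenstein, together with the Leibniz property of $d'$ established in Theorem~\ref{CHDGA}. First I would reduce to the Gorenstein* case: by the join decomposition $\sK = \mathcal L \ast \varDelta^s$ with $\mathcal L$ Gorenstein*, Theorem~\ref{thm_join} gives $\HH^\ast(\zk;\mathbb F) \cong \HH^\ast(\mathcal Z_{\mathcal L};\mathbb F) \otimes \HH^\ast(\mathcal Z_{\varDelta^s};\mathbb F)$, and since $\mathcal Z_{\varDelta^s} = (D^2)^{s+1}$ is contractible, $\HH^\ast(\mathcal Z_{\varDelta^s};\mathbb F) = \mathbb F$ by Proposition~\ref{thm_main}; so it suffices to treat $\mathcal L$, i.e. we may assume $\sK$ is Gorenstein* of dimension $(n-1)$.

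Next I would fix the duality structure on $H^\ast(\zk;\mathbb F)$. Because $\sK$ is Gorenstein*, the top graded component is $H^{-(m-n),2m}(\zk;\mathbb F)\cong\mathbb F$, spanned by the image of the top class $\alpha\in\widetilde H^{n-1}(\sK)$; call a chosen generator $\omega$, living in total degree $d := m+n$. The pairing $\langle a,b\rangle := (\text{coefficient of }\omega\text{ in }ab)$ is a nondegenerate bilinear form on $H^\ast(\zk;\mathbb F)$ by \cite[Theorem~4.6.8]{bu-pa15}, homogeneous of bidegree $(-(m-n),2m)$. The crucial point is that, by Proposition~\ref{topclass}, the top class survives to $\HH^{-(m-n),2m}(\zk;\mathbb F)$, i.e. $d'\omega = 0$ and $\omega$ is not a $d'$-coboundary; more precisely $d'$ vanishes identically on the one-dimensional top component $H^{-(m-n),2m}(\zk;\mathbb F)$ for degree reasons (its target $H^{-(m-n)+1,2m-2}$ could be nonzero, but $d'$ lowers the $u$-degree while the top class is represented by a monomial of maximal $v$-degree — I would check that the relevant component is zero using the Koszul description $R^\ast(\sK)$, or invoke Proposition~\ref{topclass} directly). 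Consequently the functional "extract the coefficient of $\omega$" descends to a well-defined linear functional $\epsilon\colon \HH^{d}(\zk;\mathbb F)\to\mathbb F$, and I will use $\langle\cdot,\cdot\rangle$ on $\HH^\ast$ defined by $\langle[a],[b]\rangle = \epsilon([a][b])$, which is well-defined by the Leibniz rule of Theorem~\ref{CHDGA}.

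Then I would argue nondegeneracy of this induced pairing on $\HH^\ast(\zk;\mathbb F)$ by a standard homological-algebra fact: if $(C,d)$ is a cochain complex over a field equipped with a perfect pairing $C^i\times C^{d-i}\to\mathbb F$ satisfying $\langle da,b\rangle = \pm\langle a,db\rangle$, then the induced pairing on $H(C)$ is perfect. Indeed, write $C = B\oplus H'\oplus B'$ where $B=\im d$, $H'$ a complement of $B$ inside $\ker d$, and $B'$ a complement of $\ker d$; the compatibility of $d$ with the pairing forces $B$ and $\ker d = B\oplus H'$ to be exact annihilators of each other up to the $H'$ factor, so the pairing restricted to $H'\cong H(C)$ is perfect. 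Applying this with $C = \CH^\ast(\zk;\mathbb F)$, the bidegree-$(-(m-n),2m)$ pairing, and the sign-compatibility coming from the Leibniz formula, we conclude $\HH^\ast(\zk;\mathbb F)$ is a Poincaré algebra of dimension $(m+n)$, and reading off bigraded components gives $\dim\HH^{-k,2\ell}(\zk;\mathbb F) = \dim\HH^{-(m-n)+k,2(m-\ell)}(\zk;\mathbb F)$. The main obstacle I anticipate is the second step: verifying cleanly that the top-degree functional is $d'$-invariant (equivalently, that $d'$ is zero on the top component, or at least that $\omega$ pairs trivially with all coboundaries), since this is what makes $\epsilon$ — and hence the whole duality — descend to cohomology; once that is in hand the rest is the formal linear-algebra argument above.
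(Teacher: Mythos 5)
Your proof is correct and follows the same skeleton as the paper's: reduce to the Gorenstein* case via $\sK=\mathcal L\ast\varDelta^s$ together with Proposition~\ref{thm_main} and Theorem~\ref{thm_join}; identify the top component $H^{-(m-n),2m}(\zk;\mathbb F)\cong\mathbb F$ generated by the top class; observe via Proposition~\ref{topclass} that the top class is a $d'$-cocycle and not a coboundary; and combine the Leibniz rule from Theorem~\ref{CHDGA} with Poincar\'e duality of $H^\ast(\zk;\mathbb F)$ to conclude. The one genuine difference is the final step. The paper observes that these ingredients exhibit $\CH^\ast(\zk;\mathbb F)$ as a commutative differential graded algebra with Poincar\'e duality in the sense of Lambrechts--Stanley~\cite{la-st04} and then simply cites~\cite[Proposition~4.7]{la-st04}. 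You instead prove the needed statement directly: over a field, a finite-dimensional complex with a perfect pairing for which $d$ is (graded) skew-adjoint has perfect induced pairing on cohomology. Your argument is right---working in the total space, $Z\perp B$ by skew-adjointness, and $\dim Z^\perp=\dim C-\dim Z=\dim B$ by rank-nullity and nondegeneracy, so $Z^\perp=B$ and the pairing on $Z/B$ is nondegenerate---and it is self-contained, which is a small gain over the citation. One small correction to a hedge you made: $d'\omega=0$ is not a ``degree reasons'' fact (the target bidegree $(-(m-n)+1,2(m-1))$ can certainly be nonzero); it holds precisely because a top class restricts to zero on every proper full subcomplex, which is exactly the content of Proposition~\ref{topclass}, and you were right to fall back on that.
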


\begin{proof}
If $\sK=\mathcal L\mathbin{*}\varDelta^s$, where $\mathcal L$ is Gorenstein*, then we have $\HH^*(\zk)=\HH^*(\mathcal Z_{\mathcal L})$ by Proposition~\ref{thm_main} and Theorem~\ref{thm_join}. Hence, we can assume that $\sK$ is Gorenstein* of dimension~$(n-1)$. Then we have
\[
  H^{m+n}(\zk;\mathbb F)\cong
  \Tor^{-(m-n),2m}_{\mathbb F[v_1,\ldots,v_m]}\bigl(\mathbb F[\sK],\mathbb F\bigr)\cong\widetilde H^{n-1}(\sK;\mathbb F)
  \cong \mathbb F,
\]
generated by the top class of $\widetilde H^{n-1}(\sK;\mathbb F)$ or by the cohomology class of the monomial $u_{[m]\setminus I}v_I$ in the Koszul complex, for any $(n-1)$-simplex $I\in\sK$.
It follows from Proposition \ref{topclass} that 
the top class of $\widetilde H^{n-1}(\sK;\mathbb F)$ is a $d'$-cocycle which is not a coboundary. Since $d'$ satisfies the Leibniz formula with respect to the product in $H^*(\zk;\mathbb F)$ by Theorem~\ref{CHDGA} and $H^*(\zk;\mathbb F)$ is a Poincar\'e algebra, $\CH^\ast(\zk;\mathbb F)= (H^\ast(\zk;\mathbb F), d')$ is a commutative differential graded algebra with Poincar\'e duality in the sense of~\cite{la-st04}. Then $\HH^*(\zk;\mathbb F)$ is a Poincar\'e algebra by~\cite[Proposition~4.7]{la-st04} (the argument is given there with $\mathbb Q$-coefficients, but it is the same for any field~$\mathbb F$).
\end{proof}

The converse of Proposition~\ref{HHPA} does not hold, 
unlike the situation with the ordinary cohomology $H^*(\zk)$. For example, if $\sK$ is $m$ disjoint points, then $\HH^*(\zk)$ is a Poincar\'e algebra by Theorem~\ref{thm_K_homotopy_discrete}, but $\sK$ is not Gorenstein if $m>2$. We may ask the following.

\begin{question}\label{question_duality}
Give a homological characterisation of simplicial complexes $\sK$ for which the double cohomology $\HH^*(\zk)$ is a Poincar\'e algebra.
\end{question}

\subsection{Examples}\label{fexam}
Here is an example of a complex obtained by the procedure described in Theorem~\ref{surgery}, but not in the class described in Theorem~\ref{thm_main2}.

\begin{example}\label{SquareEdge}
Let $\sK$ be a $4$-cycle $(1,2,3,4)$ with an edge $(4,5)$ attached at the vertex~$\{4\}$. 
The nontrivial cohomology groups are
\begin{align*}
  H^0(\zk)&=H^{0,0}(\zk)\cong\Z\langle1\rangle,\\ 
  H^3(\zk)&=H^{-1,4}(\zk)\cong\Z\langle[u_1v_3],[u_2v_4],[u_1v_5],[u_2v_5],[u_3v_5]\rangle,\\
  H^4(\zk)&=H^{-2,6}(\zk)\cong\Z\langle[u_1u_2v_5],[u_2u_3v_5],[u_1u_3v_5],[u_3u_5v_1],[u_4u_5v_2]\rangle,\\
  H^{5}(\zk)&=H^{-3,8}(\zk)\cong\Z\langle[u_1u_2u_3v_5]\rangle,\\
  H^{6}(\zk)&=H^{-2,8}(\zk)\cong\Z\langle[u_1u_2v_3v_4]\rangle,\\
  H^7(\zk)&=H^{-3,10}(\zk)\cong\Z\langle[u_3u_4u_5v_1v_2]\rangle.
\end{align*} 
The nontrivial parts of the cochain complex $\CH^\ast(\zk)$ are 
\begin{align}
\label{eq_CH2_squareedge}
  &0\longrightarrow H^{-3,10}(\zk)\stackrel{d'}{\longrightarrow} H^{-2,8}(\zk)\longrightarrow 0,\\
\label{eq_CH1_squareedge}
  &0\longrightarrow H^{-3,8}(\zk)\stackrel{d'}{\longrightarrow} H^{-2,6}(\zk)\stackrel{d'}{\longrightarrow} H^{-1,4}(\zk)\longrightarrow 0.
\end{align}
Since $d'$ of \eqref{eq_CH2_squareedge} is an isomorphism, we have 
$$
\HH^{-3,10}(\zk)=\HH^{-2,8}(\zk)=0.
$$
For the cochain complex \eqref{eq_CH1_squareedge}, observe that for $[u_1u_2u_3v_5]\in H^{-3,8}(\zk)$, 
\[
d'([u_1u_2u_3v_5])=[u_2u_3v_5]-[u_1u_3v_5]
+[u_1u_2v_5].
\]
The differential $d'$ from $H^{-2,6}(\zk)$ is described similarly, giving the following matrix presentations of the two differentials $\Z\to\Z^5$ and $\Z^5\to\Z^5$ in~\eqref{eq_CH1_squareedge}: 
\[
\begin{pmatrix}
1\\ 1\\ -1\\0 \\ 0 
\end{pmatrix} \text{ and } 
\begin{pmatrix}
0& 0& 0& -1& 0 \\
0& 0& 0& 0& -1\\
-1& 0& -1& 1& 0 \\
1& -1& 0& 0& 1\\
0& 1& 1& 0& 0 
\end{pmatrix}.
\]
Hence, the first differential is injective, while the second is surjective onto a direct summand of rank~$4$ in~$\Z^5$. The resulting nontrivial double cohomology groups are
\[
  \HH^{0,0}(\zk)\cong\HH^{-1,4}(\zk)\cong\Z,
\]
in accordance with Theorem~\ref{surgery}.
\end{example}

Here are two examples of simplicial complexes $\sK$ which are \emph{not} obtained by the procedure described in Theorem~\ref{surgery}. Nevertheless we have $\HH^*(\zk)\cong\Z\oplus\Z$ in bidegrees $(0,0)$ and $(-1,4)$. Both examples are wedge decomposable.

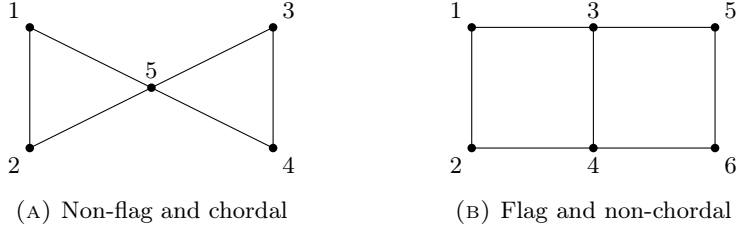
\begin{figure}
\begin{subfigure}{.45\linewidth}
\centering
  \begin{tikzpicture}[scale=0.8]
    
    \draw (0,0)--(0,2)--(2,1)--(4,0)--(4,2)--cycle;
    
    \fill [black] (0,2) circle (2pt) node[above left] at (0,2){\small 1};
    \fill [black] (0,0) circle (2pt) node[below left] at (0,0){\small 2};
    
    \fill [black] (2,1) circle (2pt) node[above] at (2,1){\small 5};
    
    \fill [black] (4,2) circle (2pt) node[above right] at (4,2){\small 3};
    \fill [black] (4,0) circle (2pt) node[below right] at (4,0){\small 4};
    
    \end{tikzpicture}
    \caption{Non-flag and chordal}
\end{subfigure}
\begin{subfigure}{.45\linewidth}
\centering
 \begin{tikzpicture}[scale=0.8]
    
    \draw (0,0)--(2,0)--(2,2)--(0,2)--cycle;
    \draw (2,2)--(4,2)--(4,0)--(2,0);
    
    \fill [black] (0,2) circle (2pt) node[above left] at (0,2){\small 1};
    \fill [black] (0,0) circle (2pt) node[below left] at (0,0){\small 2};
    
    \fill [black] (2,2) circle (2pt) node[above] at (2,2){\small 3};
    \fill [black] (2,0) circle (2pt) node[below] at (2,0){\small 4};
    
    \fill [black] (4,2) circle (2pt) node[above right] at (4,2){\small 5};
    \fill [black] (4,0) circle (2pt) node[below right] at (4,0){\small 6};
    
    \end{tikzpicture}
    \caption{Flag and non-chordal}
\end{subfigure}
\caption{Two examples}
\label{fig_twoexamples}
\end{figure}

\begin{example}\label{TwoTriangles}
Let $\sK$ be obtained by gluing two $3$-cycles 
$(1,2,5)$ and $(3,4,5)$ along the vertex $\{5\}$. See Figure \ref{fig_twoexamples}~(A). The nontrivial cohomology groups of $\zk$ are
\begin{align*}
  H^{0,0}(\zk)&\cong\Z\langle1\rangle,\\ 
  H^{-1,4}(\zk)&\cong\Z\langle[u_{1}v_{3}], [u_{1}v_{4}], [u_{2}v_{3}], [u_{2}v_{4}]\rangle,\\
  H^{-1,6}(\zk)&\cong\Z\langle[u_5v_3v_4],[u_5v_1v_2]\rangle,\\
  H^{-2,6}(\zk)&\cong\Z\langle[u_1u_2v_3], [u_1u_2v_4], [u_3u_4v_1], [u_3u_4v_2]\rangle,\\
  H^{-3,8}(\zk)&\cong\Z\langle[u_{1}u_{2}u_{3}v_{4}]-[u_{1}u_{2}u_{4}v_{3}]\rangle,\\
  H^{-2,8}(\zk)&\cong\Z\langle[u_1u_5v_3v_4],[u_2u_5v_3v_4],[u_3u_5v_1v_2],[u_4u_5v_1v_2]\rangle,\\
  H^{-3,10}(\zk)&\cong\Z\langle[u_1u_2u_5v_3v_4],[u_3u_4u_5v_1v_2]\rangle.
\end{align*} 
The nontrivial part of the complex $\CH^\ast(\zk)=(H^\ast(\zk), d')$ is given by
\begin{align}
\label{eq_CH1_wedge_of_triangles}
  0&\longrightarrow H^{-3,8}(\zk)\stackrel{d'}{\longrightarrow} H^{-2,6}(\zk)\stackrel{d'}{\longrightarrow} H^{-1,4}(\zk)\longrightarrow 0;\\
\label{eq_CH2_wedge_of_triangles}
  0&\longrightarrow H^{-3,10}(\zk)\stackrel{d'}{\longrightarrow} H^{-2,8}(\zk)\stackrel{d'}{\longrightarrow} H^{-1,6}(\zk)\longrightarrow 0.
\end{align}
Observe that $d' \colon H^{-3,8}(\zk) \to H^{-2,6}(\zk)$ in \eqref{eq_CH1_wedge_of_triangles} is given by 
\begin{align*}
    &d'([u_1u_2u_3v_4]-[u_1u_2u_4v_3])\\
    &\quad =[u_2u_3v_4]-[u_1u_3v_4]+[u_1u_2v_4]-[u_2u_4v_3]+[u_1u_4v_3]-[u_1u_2v_3]\\
    &\quad = -[u_3u_4v_2]+[u_3u_4v_1]+[u_1u_2v_4]-[u_1u_2v_3],
\end{align*}
where the second equality follows because 
\[
[u_3u_4v_2]-[u_2u_4v_3]+[u_2u_3v_4]=[u_1u_3v_4]-[u_1u_4v_3]+[u_3u_4v_1]=0 \in H^{-2,6}(\zk).
\]
Similarly, $d'\colon  H^{-2,6}(\zk
)\to  H^{-1,4}(\zk)$ is given by the matrix
\[
\begin{pmatrix}
-1 &0 &-1& 0 \\
0 &-1& 1& 0\\
1 &0 &0 &-1\\
0 &1 &0 &1
\end{pmatrix}
\]
with respect to the bases listed above. 
Its image is a direct summand of rank $3$ in $H^{-1,4}(\zk)$. It follows that 
\[
\HH^{-3,8}(\zk)=H^{-2,6}(\zk)=0 \quad\text{and}\quad \HH^{-1,4}(\zk)=\Z.
\]
Similar computations with the two differentials in \eqref{eq_CH2_wedge_of_triangles} 
show that
\[
\HH^{-3,10}(\zk)=\HH^{-2,8}(\zk)=\HH^{-1,6}(\zk)=0.
\]
Hence, the only nontrivial double cohomology groups are
\[
  \HH^{0,0}(\zk)\cong\HH^{-1,4}(\zk)\cong\Z.
\]
\end{example}

\begin{example}\label{TwoSquares}
Let $\sk$ be obtained by gluing two $4$-cycles $(1,2,3,4)$ and $(3,4,5,6)$ along the edge $(3,4)$. 
See Figure~\ref{fig_twoexamples}~(B). The nontrivial cohomology groups of $\zk$ are
\begin{align*}
  H^{0,0}(\zk)&\cong\Z\langle1\rangle,\\ 
  H^{-1,4}(\zk)&\cong\Z\langle[u_1v_4],[u_1v_5],[u_1v_6],[u_2v_3],[u_2v_5],[u_2v_6],[u_3v_6],[u_4v_5]\rangle,\\
  H^{-2,6}(\zk)&\cong\Z\langle[u_1u_2v_5],[u_1u_2v_6],[u_5u_6v_1],[u_5u_6v_2],[u_1u_3v_6],[u_2u_4v_5],\\
  & \qquad [u_3u_5v_2],[u_4u_6v_1],[u_1u_4v_5],[u_1u_5v_4],[u_2u_3v_6],[u_2u_6v_3]\rangle,\\
  H^{-3,8}(\zk)&\cong\Z\langle[u_1u_2u_3v_6],[u_1u_2u_4v_5],[u_3u_5u_6v_2],[u_4u_5u_6v_1],\\
  & \qquad [u_1u_2u_5v_6-u_1u_2u_6v_5]\rangle,\\
  H^{-2,8}(\zk)&\cong\Z\langle[u_3u_4v_1v_2],[u_3u_4v_5v_6]\rangle,\\
  H^{-3,10}(\zk)&\cong\Z\langle[u_3u_4u_5v_1v_2],[u_3u_4u_6v_1v_2],[u_1u_3u_4v_5v_6],[u_2u_3u_4v_5v_6]\rangle,\\
  H^{-4,12}(\zk)&\cong\Z\langle[u_3u_4u_5u_6v_1v_2],[u_1u_2u_3u_4v_5v_6]\rangle.
\end{align*} 
The nontrivial part of the complex $\CH^\ast(\zk)$ is given by
\begin{align*}
  &0\longrightarrow H^{-4,12}(\zk)\stackrel{d'}{\longrightarrow} H^{-3,10}(\zk)\stackrel{d'}{\longrightarrow} H^{-2,8}(\zk)\longrightarrow 0,\\
  &0\longrightarrow H^{-3,8}(\zk)\stackrel{d'}{\longrightarrow} H^{-2,6}(\zk)\stackrel{d'}{\longrightarrow} H^{-1,4}(\zk)\longrightarrow 0.
\end{align*}
A calculation similar to that in the previous example shows that the nontrivial double cohomology groups are
\[
  \HH^{0,0}(\zk)\cong\HH^{-1,4}(\zk)\cong\Z.
\]
\end{example}

We have considered several examples of $\sK$ 
with double cohomology of rank~$2$. Starting with these examples 
and applying Theorem~\ref{thm_join}, we can easily construct $\zk$ such that $\rank \HH^*(\zk)=2^n$ for each $n\geq 0$, and only the powers of two can be realized as ranks of the double cohomology in this way. 
Recall from Corollary \ref{cor_euler_char_of_HH_for_simplex} that the Euler characteristic of $\HH^\ast(\zk)$ is zero unless $\sK$ is a simplex. It follows that the rank of $\HH^\ast(\zk)$ is always even. Hence, we may ask the following question.

\begin{question}\label{HHrankRealization_quest}
Let $r$ be a positive even integer different from a power of two. Does there exist a simplicial complex $\sK$ such that $\rank \HH^*(\zk)=r$?
\end{question}

Recently Han \cite{han23} gave an example for $r=6$. By taking joins we can also get an example for any $r=2^{s+t}3^t$.

\end{document}